\renewcommand{\epsilon}{\varepsilon}
\newcommand{\pnorm}[2][]{\if #1'' \left|#2\right|_p \else \left|#2\right|_{#1} \fi}
\newtheorem{lemma}{Lemma}[section]
\newtheorem{proposition}{Proposition}[section]
\newtheorem{theorem}{Theorem}[section]
\newtheorem{definition}{Definition}[section]
\newcommand{\eps}{\varepsilon}
\renewcommand{\O}{{\mathcal O}}
\newcommand{\maxx}{{\rm max}}
\begin{document}

\title[Critical fractional $p$-Laplacian systems]{Critical  nonlocal systems with concave-convex powers}

\author[W.\ Chen and M.\ Squassina]{Wenjing Chen and Marco Squassina}

\address[W.\ Chen]{School of Mathematics and Statistics \newline
Southwest University,
Chongqing 400715, People's Republic of China.}
\email{wjchen@swu.edu.cn}

\address[M.\ Squassina]{Dipartimento di Matematica e Fisica \newline
Universit\`a Cattolica del Sacro Cuore, Via dei Musei 41, I-25121 Brescia, Italy}
\email{marco.squassina@dmf.unicatt.it}

\subjclass[2010]{35J20, 35J60,  47G20}
\keywords{Critical fractional $p$-Laplacian system; Concave-convex nonlinearities; Nehari manifold}
\thanks{W.\ Chen was supported by the National Natural Science Foundation of China No.\ 11501468 and 
	by the Natural Science Foundation of Chongqing cstc2016jcyjA0323.\ M.\ Squassina is member of the Gruppo 
	Nazionale per l'Analisi Matematica, la Probabilit\`a e le loro Applicazioni}

\begin{abstract}
By using the fibering method jointly with Nehari manifold techniques, 
we obtain the existence of multiple solutions to a fractional $p$-Laplacian system
involving critical concave-convex nonlinearities provided that a suitable smallness condition
on the parameters involved is assumed. The result is obtained despite there is no
general classification for the optimizers of the critical fractional Sobolev embedding.
\end{abstract}

\maketitle

\newtheorem{Lemma}{Lemma}[section]
\newtheorem{Theorem}{Theorem}[section]
\newtheorem{Definition}{Definition}[section]
\newtheorem{Proposition}{Proposition}[section]
\newtheorem{Remark}{Remark}[section]
\newtheorem{Corollary}{Corollary}[section]


%

\section{Introduction}\label{intro}
In this  work, we study the multiplicity of solutions to the following fractional elliptic system
\begin{eqnarray}\label{frac1}
\left\{ \arraycolsep=1.5pt
   \begin{array}{lll}
(-\Delta)_p^s u =\lambda |u|^{q-2}u+ \frac{2\alpha}{\alpha+\beta}|u|^{\alpha-2}u|v|^{\beta}\ \   &
{\rm in}\ \Omega \\[2mm]
(-\Delta)_p^s v =\mu |v|^{q-2}v+ \frac{2\beta}{\alpha+\beta}|u|^{\alpha}|v|^{\beta-2}v\ \   &
{\rm in}\ \Omega \\[2mm]
u=v=0 \ \  & {\rm in}\ \mathbb{R}^n\setminus\Omega,
\end{array}
\right.
\end{eqnarray}
where $\Omega$ is a smooth bounded set in $\mathbb{R}^n$, $n>ps$ with $s\in(0,1)$, 
$\lambda,\mu>0$ are two parameters, $1<q<p$ and $\alpha>1,\beta>1$ satisfy
$ \alpha+\beta=p_s^\ast$, where $p_s^\ast =np/(n-ps)$ is the fractional critical 
Sobolev exponent and $(-\Delta)_p^s$ is the fractional $p$-Laplacian operator, defined on smooth functions as
\begin{eqnarray}\label{frac3}
(-\Delta)_p^su(x)=2\lim\limits_{\epsilon\to0}\int_{\mathbb{R}^n\backslash B_\epsilon(x)}\frac{|u(y)-u(x)|^{p-2}(u(y)-u(x))}{|x-y|^{n+ps}}dy,\quad\,\,\,  x\in\mathbb{R}^n.
\end{eqnarray}
This definition is consistent, up to a normalization constant depending on $n$ and $s$, with the linear
fractional Laplacian $(-\Delta)^s$, for the case $p=2$.
If we set $\alpha=\beta$, $\alpha+\beta=r$, $\lambda=\mu$ and $u=v$, 
then system \eqref{frac1} reduces to the following fractional equation with concave-convex nonlinearities
\begin{eqnarray}\label{frac1singl}
\left\{ \arraycolsep=1.5pt  
   \begin{array}{ll}
(-\Delta)_p^s u =\lambda |u|^{q-2}u+  |u|^{r-2}u\ \  \ &
{\rm in}\ \Omega \\
u=0 \ \ \quad & {\rm in}\ \mathbb{R}^n\setminus\Omega,
\end{array}
\right.
\end{eqnarray}
where $1<q<p$ and $p<r<p_s^\ast$. In \cite{gs} 
Goyal and Sreenadh studied the existence and multiplicity of non-negative solutions to problem \eqref{frac1singl}
for subcritical concave-convex nonlinearities.
For the fractional $p$-Laplacian, consider the following general problem
\begin{eqnarray*}
\quad\left\{ \arraycolsep=1.5pt
   \begin{array}{ll}
(-\Delta)_p^s u=f(x,u)\ \ \ &
{\rm in}\ \Omega \\
u=0 \ \ \quad & {\rm in}\ \mathbb{R}^n\setminus\Omega.
\end{array}
\right.
\end{eqnarray*}
So far various results have been obtained for these kind of problems.
In the works \cite{ll}, the
eigenvalue problem associated with $(-\Delta)_p^s$ is considered and some properties of
the first and of higher (variational) eigenvalues were obtained. 
Some results about the existence of solutions have been considered in \cite{gsa,llps,psy1}, see also the references therein.
On the other hand, the fractional problems  for $p=2$ have been investigated by many researchers,
see for example \cite{svmountain} for the subcritical case, \cite{bcss,servadeivaldinociBN} for the critical case.
In particular, the authors of \cite{bcp} studied the fractional Laplacian equation involving a concave-convex nonlinearity in the subcritical case.
Moreover, by Nehari manifold and fibering maps arguments, the authors of \cite{chendeng} obtained the existence of multiple solutions to \eqref{frac1singl} for both the subcritical and critical case. The existence and multiplicity of solutions for the system when $s=1$ were considered by many authors, we refer to 
\cite{hsu,hsulin,wu} and references therein. In particular, in \cite{hsu}, multiple solutions for the critical elliptic system
\begin{eqnarray*}\label{frac1ain}
\left\{ \arraycolsep=1.5pt
   \begin{array}{lll}
-\Delta_pu =\lambda |u|^{q-2}u+ \frac{2\alpha}{\alpha+\beta}|u|^{\alpha-2}u|v|^{\beta} &
{\rm in}\ \Omega\\[1mm]
-\Delta_p v =\mu |v|^{q-2}v+ \frac{2\beta}{\alpha+\beta}|u|^{\alpha}|v|^{\beta-2}v \ \  \ &
{\rm in}\ \Omega\\[1mm]
u=v=0 \ \ \quad & {\rm on}\ \partial\Omega,
\end{array}
\right.
\end{eqnarray*}
where $q<p$ and $\alpha>1,\beta>1$ satisfy
$ \alpha+\beta=np/(n-p)$ were obtained. For the fractional system with $p=2$, we mention \cite{squadru,hemzou}.
However, as far as we know, there are a few results on the case $p\neq 2$ with concave-convex critical nonlinearities.  Recently, 
in \cite{chendeng2} system \eqref{frac1} was studied with {\em subcritical} 
concave-convex type nonlinearity, namely when $\alpha+\beta<p^*_s$.
Motivated by above results, in the present paper, we are interested in the multiplicity of solutions 
for {\em critical} fractional  $p$-Laplacian system \eqref{frac1}, namely $$\alpha+\beta=p_s^\ast.$$
We denote by $W^{s,p}(\Omega)$ the usual fractional Sobolev space endowed with the norm
\begin{equation*}
\|u\|_{W^{s,p}(\Omega)}:=\|u\|_{L^p(\Omega)}+\left(\int_{\Omega\times\Omega}\frac{|u(x)-u(y)|^p}{|x-y|^{n+ps}}dx\,dy\right)^{1/p}.
\end{equation*}
Set $Q:=\mathbb{R}^{2n}\setminus(\mathcal{C}\Omega\times\mathcal{C}\Omega)$ with 
$\mathcal{C}\Omega=\mathbb{R}^n\setminus\Omega$. We define
\begin{equation*}
X:=  \left\{u:\mathbb{R}^n\to\mathbb{R}\ \mbox{measurable},\ u|_\Omega\in L^p(\Omega) \ \mbox{and}\  \int_Q\frac{|u(x)-u(y)|^p}{|x-y|^{n+ps}}dx\,dy<\infty\right\}.
\end{equation*}
The space $X$ is endowed with the following norm 
\begin{equation*}
\|u\|_X :=\|u\|_{L^p(\Omega)}+\left(\int_Q\frac{|u(x)-u(y)|^p}{|x-y|^{n+ps}}dx\,dy\right)^{1/p}.
\end{equation*}
The space $X_0$ is defined as $X_0:=\{u\in X: \text{$u=0$ on $\mathcal{C}\Omega$}\}$ or equivalently as
$\overline{C_0^\infty(\Omega)}^X\!\!\!$ and, for any $p>1$, it is a uniformly convex Banach space endowed with the norm defined by
\begin{equation}\label{normabad}
\|u\|_{X_0} = \left(\int_Q\frac{ |u(x)-u(y)|^{p}}{|x-y|^{n+ps}}dx\,dy\right)^{1/p}.
\end{equation}
Since $u=0$ in $\mathbb{R}^n\setminus\Omega$, the integral in 
\eqref{normabad} can be extended to all $\mathbb{R}^n$.
The embedding $X_0\hookrightarrow L^r(\Omega)$ is continuous for any $r\in [1,p_s^\ast]$ and
compact for $r\in [1,p_s^\ast)$.
We set $E:=X_0\times X_0$,  with the norm
\begin{equation*}
\|(u,v)\|  =  \left(\|u\|_{X_0}^p+\|v\|_{X_0}^p\right)^{\frac{1}{p}} =  \left(\int_Q\frac{ |u(x)-u(y)|^{p}}{|x-y|^{n+ps}}dx\,dy+\int_Q\frac{ |v(x)-v(y)|^{p}}{|x-y|^{n+ps}}dx\,dy\right)^{\frac{1}{p}}. 
\end{equation*}
For convenience, we define
\begin{eqnarray}\label{huaa}
\mathcal{A}(u,\phi):=\int_Q\frac{\big| u(x)-u(y)\big|^{p-2} \big(u(x)-u(y)\big)\big( \phi(x)-\phi(y)\big)}{|x-y|^{n+ps}} \,dx\,dy.
\end{eqnarray}

\begin{definition}\label{weaksolutions}
We say that $(u,v)\in E$ is a weak solution of problem \eqref{frac1} if 
\begin{align*}\label{frac4}
\mathcal{A}(u,\phi)+\mathcal{A}(v,\psi)=\int_\Omega \left(\lambda  |u|^{q-2}u\phi+ \mu |v|^{q-2}v\psi \right)dx
+\frac{2\alpha}{\alpha+\beta}\int_\Omega|u|^{\alpha-2}u|v|^\beta\phi dx
+\frac{2\beta}{\alpha+\beta}\int_\Omega|u|^{\alpha}|v|^{\beta-2}v\psi dx
\end{align*}
for all $(\phi,\psi)\in E$.
\end{definition}

\noindent
In the sequel we omit the term {\em weak} when referring to solutions which satisfy Definition \ref{weaksolutions}. 
\vskip3pt
\noindent
Let $s\in(0,1)$, $p>1$ and let $\Omega$ be a bounded domain of $\mathbb{R}^n$. The next is our main result.

\begin{theorem}\label{fracmain}
Assume that 
\begin{equation}
\label{conditions-main}
p^2s<n<
\begin{cases}
\infty & \text{if $p\geq 2$}, \\
\frac{ps}{2-p} & \text{if $p<2$},
\end{cases}
\qquad
\frac{n(p-1)}{n-ps}\leq q<p,
\qquad \alpha+\beta=\frac{np}{n-ps}.
\end{equation}
Then there exists a positive constant $\Lambda_\ast=\Lambda_*(p,q,s,n,|\Omega|)$ such that for 
$$
0<\lambda^{\frac{p}{p-q}}+\mu^{\frac{p}{p-q}}< \Lambda_\ast,
$$
the system \eqref{frac1} admits at least two nontrivial solutions.
\end{theorem}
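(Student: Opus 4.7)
The plan is to set up a variational framework using the Nehari manifold decomposed via fibering maps, and to obtain two distinct solutions by minimization on its two open components. The natural energy functional for \eqref{frac1} is
$$
J_{\lambda,\mu}(u,v):=\frac{1}{p}\|(u,v)\|^p-\frac{1}{q}\int_{\Omega}(\lambda|u|^{q}+\mu|v|^{q})\,dx-\frac{2}{\alpha+\beta}\int_{\Omega}|u|^{\alpha}|v|^{\beta}\,dx,
$$
whose critical points in $E$ coincide with the weak solutions of \eqref{frac1}. Since $J_{\lambda,\mu}$ is unbounded below on $E$, I would restrict attention to the Nehari manifold $\mathcal{N}:=\{(u,v)\in E\setminus\{(0,0)\}:\langle J'_{\lambda,\mu}(u,v),(u,v)\rangle=0\}$ and, for each nontrivial $(u,v)$, study the fibering map $\phi_{u,v}(t):=J_{\lambda,\mu}(tu,tv)$. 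Since $1<q<p<\alpha+\beta=p_s^\ast$, the map $\phi_{u,v}$ is concave-convex on $(0,\infty)$ with at most one local maximum and one local minimum, so that $\mathcal{N}$ splits naturally as $\mathcal{N}^{+}\cup\mathcal{N}^{0}\cup\mathcal{N}^{-}$ according to the sign of $\phi''_{u,v}(1)$.

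The first technical step is to show that, under the smallness condition $\lambda^{p/(p-q)}+\mu^{p/(p-q)}<\Lambda_{\ast}$, one has $\mathcal{N}^{0}=\emptyset$, so that $\mathcal{N}^{\pm}$ are smooth submanifolds of $E$ on which Lagrange multipliers apply and every constrained critical point of $J_{\lambda,\mu}$ is a free critical point. The value of $\Lambda_{\ast}$ is obtained by computing explicitly when $\phi'_{u,v}(1)=0$ and $\phi''_{u,v}(1)=0$ can be solved simultaneously, using H\"older's inequality together with the fractional Sobolev inequality and its sharp constant $S$.

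For the first solution, I would set $c^{+}:=\inf_{\mathcal{N}^{+}}J_{\lambda,\mu}$, check that $J_{\lambda,\mu}$ is coercive and bounded below on $\mathcal{N}$, show that $c^{+}<0$ by exploiting the concave part of $\phi_{u,v}$, and apply Ekeland's variational principle on $\mathcal{N}^{+}$ to produce a $(PS)_{c^{+}}$-sequence. Its weak limit $(u^{+},v^{+})$ lies again in $\mathcal{N}^{+}$ and achieves $c^{+}$: the strict negativity of the level rules out both the vanishing and the critical-concentration scenarios of the concentration-compactness alternative, since each of them would force a non-negative limit energy. This yields a first nontrivial solution with negative energy.

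The second solution, on $\mathcal{N}^{-}$, is the main obstacle. Here $c^{-}:=\inf_{\mathcal{N}^{-}}J_{\lambda,\mu}>0$ and the critical term destroys compactness in general. The plan is first to prove a local Palais--Smale condition valid strictly below an explicit threshold of the form
$$
c^{\ast}_{\lambda,\mu}=c_{0}\,S_{\alpha,\beta}^{\,n/(ps)}-D\bigl(\lambda^{p/(p-q)}+\mu^{p/(p-q)}\bigr),
$$
with $S_{\alpha,\beta}$ the best constant for the coupled critical Sobolev inequality and $c_{0},D>0$ depending only on $p,s,n,q,|\Omega|$. This would follow from a concentration-compactness analysis adapted to the fractional $p$-Laplacian, the key point being that every bubble of a non-compact $(PS)$-sequence carries at least the energy quantum $c_{0}\,S_{\alpha,\beta}^{n/(ps)}$. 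The decisive step is then to show $c^{-}<c^{\ast}_{\lambda,\mu}$ by inserting into the fibering map a suitable family of test pairs built from a concentrating almost-extremizer of $S_{\alpha,\beta}$. Since no classification of extremizers for the fractional $p$-Laplacian Sobolev inequality is available, all remainder terms must be controlled solely through the known asymptotic decay of such almost-minimizers, and it is precisely here that the dimensional restriction $n>p^{2}s$ (with $n<ps/(2-p)$ when $p<2$) and the lower bound $q\geq n(p-1)/(n-ps)$ in \eqref{conditions-main} are consumed. Once this upper bound on $c^{-}$ is obtained, Ekeland's principle on $\mathcal{N}^{-}$ produces a minimizer $(u^{-},v^{-})$ with $J_{\lambda,\mu}(u^{-},v^{-})=c^{-}>0$, which is automatically distinct from $(u^{+},v^{+})$ since their energies have opposite signs.
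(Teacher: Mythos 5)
Your proposal follows essentially the same route as the paper: Nehari decomposition via fibering maps, emptiness of $\mathcal{N}^{0}$ under the smallness condition, Ekeland's principle on $\mathcal{N}^{+}$ for a negative-energy solution, a local Palais--Smale condition below a threshold involving $S_{\alpha,\beta}$, and an energy estimate with concentrating truncations of an almost-extremizer whose only available information is its optimal decay rate --- which is exactly where the paper consumes the restrictions $n>p^{2}s$ and $q\geq n(p-1)/(n-ps)$. The one step you omit is showing that neither solution is \emph{semi-trivial} (i.e.\ has a vanishing component): the paper devotes its final section to this, ruling out $(u,0)$ for the positive-energy solution by a sign computation and for the negative-energy minimizer by inserting a competitor $(t_1u_1,t_1w)$ with $w\neq 0$ that strictly lowers the level $c_{\lambda,\mu}^{+}$; you should add such an argument to fully justify the word ``nontrivial'' in the statement.
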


\noindent
For the critical case, since the embedding $X_0\hookrightarrow L^{p_s^\ast}(\mathbb{R}^n)$ fails to be compact, the energy functional does {\em not} satisfy the Palais-Smale condition globally, but that holds true when the energy level falls inside a suitable range related to the best fractional critical Sobolev constant $S$, namely 
\begin{eqnarray}\label{criticalfrac}
S  :=\inf\limits_{u\in X_0\backslash\{0\}} \frac{\displaystyle\int_{\mathbb{R}^{2n}}\frac{|u(x)-u(y)|^p}{|x-y|^{n+ps}}dxdy}{\left(\displaystyle\int_{\Omega}|u(x)|^{\frac{np}{n-ps}}dx\right)^{\frac{n-ps}{n}}}.
\end{eqnarray}
For the critical fractional case with $p\neq 2$, the main difficulty is the lack of an explicit formula for minimizers of $S$ which is very
often a key tool to handle the estimates leading to the compactness range of the functional. It was 
conjectured that, up to a multiplicative constant,  all minimizers are of the form $U((x-x_0)/\eps),$ with  
\begin{equation*}
U(x)=(1+|x|^{\frac{p}{p-1}})^{-\frac{n-ps}{p}},\quad x\in \mathbb{R}^n.
\end{equation*}
This conjecture was proved in \cite{classif} for $p=2$, but for $p\neq 2$, it is not even 
known if these functions are minimizers of $S$. On the other hand, as in \cite{mpsy}, we can overcome 
this difficulty by the optimal asymptotic behavior of minimizers, which was recently obtained in \cite{brasco}.
This will allow us to prove Lemma \ref{psc}, related to the Palais-Smale condition. That is the only point where the restriction 
\eqref{criticalfrac} on $p,q,n$ comes into play. On the other hand we point out that, as detected in \cite{mpsy}, $n=p^2s$ 
corresponds to the critical dimension
for the nonlocal Br\'ezis-Nirenberg problem.
\smallskip

This paper is organized as follows. In Section \ref{premed}, we give some notations and preliminaries for Nehari manifold and fibering maps. In Section \ref{ps}, we show $(PS)_c$ condition holds for $J_{\lambda,\mu}$ with $c$ in certain interval. In Sections \ref{Exx} and
\ref{finalsect}, we complete the proof of Theorem \ref{fracmain}.

\section{The fibering properties}\label{premed}

In this section, we give some notations and preliminaries for the Nehari manifold and the analysis of the fibering maps.
Being a weak solution $(u,v)\in E$ is equivalent to being a critical point of the following $C^1$ functional on $E$
\begin{align*}
J_{\lambda,\mu}(u,v)&:=\frac{1}{p}\int_Q\frac{|u(x)-u(y)|^p}{|x-y|^{n+ps}} \,dx\,dy
+\frac{1}{p}\int_Q\frac{|v(x)-v(y)|^p}{|x-y|^{n+ps}} \,dx\,dy \\
&-\frac{1}{q}\int_\Omega \left(\lambda  |u|^{q}+ \mu |v|^{q} \right)dx-\frac{2}{\alpha+\beta}\int_\Omega|u|^{\alpha}|v|^\beta dx.
\end{align*}
By a direct calculation, we have that $J_{\lambda,\mu}\in C^1(E,\mathbb{R})$ and
\begin{align*}
\langle J_{\lambda,\mu}'(u,v),(\phi,\psi)\rangle 
&= \mathcal{A}(u,\phi)
+\mathcal{A}(v,\psi)-\int_\Omega \left(\lambda  |u|^{q-2}u\phi+ \mu |v|^{q-2}v\psi \right)dx\nonumber\\
&-\frac{2\alpha}{\alpha+\beta}\int_\Omega|u|^{\alpha-2}u|v|^\beta\phi dx
-\frac{2\beta}{\alpha+\beta}\int_\Omega|u|^{\alpha}|v|^{\beta-2}v\psi dx
\end{align*}
for any $(\phi,\psi)\in E$.
We will study critical points of the function $J_{\lambda,\mu}$ on $E$.
Consider the {\em Nehari} manifold
$$
\mathcal{N}_{\lambda,\mu}=\left\{(u,v)\in E\backslash\{(0,0)\}:  \langle J_{\lambda,\mu}'(u,v),(u,v)\rangle =0\right\}.
$$
Then, $(u,v)\in \mathcal{N}_{\lambda,\mu}$ if and only if $(u,v)\neq (0,0)$ and 
\begin{equation*}
\|(u,v)\|^p=\int_\Omega (\lambda |u|^{q}+\mu |v|^{q})dx +2\int_\Omega  |u|^{\alpha}|v|^\beta dx.
\end{equation*}
The Nehari manifold $\mathcal{N}_{\lambda,\mu}$ is closely linked to the behavior of the function of the form $\varphi_{u,v}:t\mapsto J_{\lambda,\mu}(tu,tv)$ for $t>0$ defined by
$$
\varphi_{u,v}(t):=J_{\lambda,\mu}(tu,tv)=\frac{t^p}{p}\|(u,v)\|^p- \frac{t^{q}}{q}\int_\Omega  (\lambda |u|^{q}+\mu |v|^q)dx -\frac{2t^{\alpha+\beta}}{\alpha+\beta}\int_\Omega |u|^{\alpha}|v|^\beta dx.
$$
Such maps are known as {\em fibering maps} and were introduced by Drabek and Pohozaev in \cite{dp}.

\begin{lemma}[Fibering map]
	\label{lem2}
Let $(u,v)\in E\backslash\{(0,0)\}$, then  $(tu,tv)\in \mathcal{N}_{\lambda,\mu}$ if and only if $\varphi_{u,v}'(t)=0$.
\end{lemma}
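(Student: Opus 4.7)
The plan is to show this by direct computation, observing that the Nehari condition at the scaled point $(tu,tv)$ is, up to a positive power of $t$, exactly the derivative of the fibering map.

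First I would compute $\varphi'_{u,v}(t)$ termwise from the explicit formula
\[
\varphi_{u,v}(t)=\frac{t^p}{p}\|(u,v)\|^p-\frac{t^q}{q}\int_\Omega(\lambda|u|^q+\mu|v|^q)\,dx-\frac{2t^{\alpha+\beta}}{\alpha+\beta}\int_\Omega|u|^\alpha|v|^\beta\,dx,
\]
obtaining
\[
\varphi'_{u,v}(t)=t^{p-1}\|(u,v)\|^p-t^{q-1}\int_\Omega(\lambda|u|^q+\mu|v|^q)\,dx-2t^{\alpha+\beta-1}\int_\Omega|u|^\alpha|v|^\beta\,dx.
\]

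Next I would evaluate $\langle J'_{\lambda,\mu}(tu,tv),(tu,tv)\rangle$ using the expression for $J'_{\lambda,\mu}$ derived just above the statement of the lemma. Factoring out the homogeneity of each term (degree $p$ from $\mathcal{A}(tu,tu)+\mathcal{A}(tv,tv)$, degree $q$ from the concave part, and degree $\alpha+\beta$ from the coupling integral), one gets
\[
\langle J'_{\lambda,\mu}(tu,tv),(tu,tv)\rangle=t^{p}\|(u,v)\|^p-t^{q}\int_\Omega(\lambda|u|^q+\mu|v|^q)\,dx-2t^{\alpha+\beta}\int_\Omega|u|^\alpha|v|^\beta\,dx.
\]

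Comparing the two identities gives the key relation
\[
\langle J'_{\lambda,\mu}(tu,tv),(tu,tv)\rangle=t\,\varphi'_{u,v}(t).
\]
Since $(tu,tv)\in\mathcal{N}_{\lambda,\mu}$ requires $(tu,tv)\neq(0,0)$ and hence $t\neq 0$, this equivalence proves both implications. The only minor subtlety, which I would address with a single sentence, is that one works with $t>0$ as specified in the definition of $\varphi_{u,v}$, so division by $t$ is legitimate. There is no substantial obstacle here: the lemma is essentially a change-of-variable/homogeneity observation.
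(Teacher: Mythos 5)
Your proof is correct and follows essentially the same route as the paper: the paper writes the one-line identity $\varphi_{u,v}'(t)=\langle J_{\lambda,\mu}'(tu,tv),(u,v)\rangle$ (the chain rule), while you obtain the equivalent relation $\langle J_{\lambda,\mu}'(tu,tv),(tu,tv)\rangle=t\,\varphi_{u,v}'(t)$ by homogeneity and divide by $t>0$. The two computations are the same observation, and your remark about $t>0$ correctly handles the only point requiring care.
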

\begin{proof}
The result is a consequence of the fact that
$
\varphi_{u,v}'(t)=\langle J_{\lambda,\mu}'(tu,tv),(u,v)\rangle.
$
\end{proof}

\noindent
We note that
\begin{equation}\label{nehar2}
\varphi_{u,v}'(t)=t^{p-1}\|(u,v)\|^p- t^{q-1}\int_\Omega  (\lambda |u|^{q}+\mu |v|^{q})dx -2t^{\alpha+\beta-1}\int_\Omega  |u|^{\alpha}|v|^\beta dx,
\end{equation}
and
\begin{equation*}
\varphi_{u,v}''(t)= (p-1)t^{p-2}\|(u,v)\|^p- (q-1)t^{q-2}\int_\Omega  (\lambda |u|^{q}+\mu |v|^{q})dx -2(\alpha+\beta-1)t^{\alpha+\beta-2}\int_\Omega  |u|^{\alpha}|v|^\beta dx.
\end{equation*}
By Lemma \ref{lem2}, $(u,v)\in \mathcal{N}_{\lambda,\mu}$ if and only if $\varphi_{u,v}'(1)=0$. Hence for $(u,v)\in \mathcal{N}_{\lambda,\mu}$, 
\eqref{nehar2} yields
\begin{align}\label{ma2}
\varphi_{u,v}''(1) &= (p-1)\|(u,v)\|^p-(q-1)\int_\Omega (\lambda |u|^{q}+\mu|v|^q)dx-2(\alpha+\beta-1) \int_\Omega  |u|^{\alpha}|v|^\beta dx\nonumber\\
&=2(p-(\alpha+\beta)) \int_\Omega |u|^{\alpha}|v|^\beta dx+(p-q)\int_\Omega (\lambda |u|^{q}+\mu|v|^q)dx\nonumber\\
&=(p-q)\|(u,v)\|^p-2((\alpha+\beta)-q)\int_\Omega |u|^{\alpha}|v|^\beta dx\nonumber\\
&= (p-(\alpha+\beta))\|(u,v)\|^p+ ((\alpha+\beta)-q)\int_\Omega (\lambda |u|^{q}+\mu|v|^q)dx.
\end{align}
Thus, it is natural to split $\mathcal{N}_{\lambda,\mu}$ into three parts corresponding to local minima, local maxima 
and points of inflection of $\varphi_{u,v}$, namely
\begin{align*}
& \mathcal{N}_{\lambda,\mu}^+:=\big\{ (u,v)\in \mathcal{N}_{\lambda,\mu} : \varphi_{u,v}''(1) >0\big\},  \\
& \mathcal{N}_{\lambda,\mu}^-:=\big\{ (u,v)\in \mathcal{N}_{\lambda,\mu} : \varphi_{u,v}''(1) <0\big\},   \\
&\mathcal{N}_{\lambda,\mu}^0:=\big\{ (u,v)\in \mathcal{N}_{\lambda,\mu} : \varphi_{u,v}''(1) =0\big\}.
\end{align*}
We will prove the existence of solutions of problem \eqref{frac1} by investigating the existence of minimizers of functional $J_{\lambda,\mu}$ on $\mathcal{N}_{\lambda,\mu}$. Although $\mathcal{N}_{\lambda,\mu}$ is a subset of $E$, we can see that the local minimizers on Nehari manifold $\mathcal{N}_{\lambda,\mu}$ are usually critical points of $J_{\lambda,\mu}$. We have the following 

\begin{lemma}[Natural constraint]
	\label{le0} 
Suppose that $(u_0,v_0)$ is a local minimizer of $J_{\lambda,\mu}$ on $\mathcal{N}_{\lambda,\mu}$ and 
that $(u_0,v_0)\not\in \mathcal{N}_{\lambda,\mu}^0$.\ Then $(u_0,v_0)$ is a critical point of $J_{\lambda,\mu}$.
\end{lemma}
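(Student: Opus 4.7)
The plan is to apply the Lagrange multiplier rule together with the identity that links $\langle \Phi'(u,v),(u,v)\rangle$ to the fibering map's second derivative. Define the constraint functional $\Phi: E\to \mathbb{R}$ by
\[
\Phi(u,v):=\langle J_{\lambda,\mu}'(u,v),(u,v)\rangle = \|(u,v)\|^p - \int_\Omega (\lambda|u|^q+\mu|v|^q)\,dx - 2\int_\Omega |u|^\alpha|v|^\beta\,dx,
\]
so that $\mathcal{N}_{\lambda,\mu}=\Phi^{-1}(0)\setminus\{(0,0)\}$. Note $\Phi\in C^1(E,\mathbb{R})$ since all powers involved are admissible exponents on $\Omega$.

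First I would compute $\langle \Phi'(u_0,v_0),(u_0,v_0)\rangle$ and relate it to $\varphi''_{u_0,v_0}(1)$. Observe that the scaling $g(t):=\Phi(tu_0,tv_0)=t\,\varphi_{u_0,v_0}'(t)$ gives, by differentiation at $t=1$,
\[
\langle \Phi'(u_0,v_0),(u_0,v_0)\rangle = g'(1)=\varphi_{u_0,v_0}'(1)+\varphi_{u_0,v_0}''(1)=\varphi_{u_0,v_0}''(1),
\]
the last equality using $(u_0,v_0)\in \mathcal{N}_{\lambda,\mu}$ and Lemma \ref{lem2}. By assumption $(u_0,v_0)\notin \mathcal{N}_{\lambda,\mu}^0$, so $\varphi_{u_0,v_0}''(1)\neq 0$, which means $\Phi'(u_0,v_0)\neq 0$ in $E^*$ and hence $\mathcal{N}_{\lambda,\mu}$ is a $C^1$ manifold near $(u_0,v_0)$.

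Next I invoke the classical Lagrange multiplier theorem: since $(u_0,v_0)$ is a local minimizer of $J_{\lambda,\mu}$ constrained to the smooth manifold $\Phi^{-1}(0)$, there exists $\theta\in\mathbb{R}$ with
\[
J_{\lambda,\mu}'(u_0,v_0)=\theta\,\Phi'(u_0,v_0)\quad\text{in }E^*.
\]
Testing this identity against $(u_0,v_0)$ and using $\langle J_{\lambda,\mu}'(u_0,v_0),(u_0,v_0)\rangle=0$ (as $(u_0,v_0)\in\mathcal{N}_{\lambda,\mu}$), we obtain
\[
0=\theta\,\langle \Phi'(u_0,v_0),(u_0,v_0)\rangle = \theta\,\varphi_{u_0,v_0}''(1).
\]
Since $\varphi_{u_0,v_0}''(1)\neq 0$, necessarily $\theta=0$, and therefore $J_{\lambda,\mu}'(u_0,v_0)=0$ in $E^*$, which is the claim.

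There is no real obstacle here beyond verifying that $\Phi$ is indeed $C^1$ (straightforward from the $C^1$ regularity of $J_{\lambda,\mu}$ on $E$) and that the scaling identity $\Phi(tu,tv)=t\,\varphi_{u,v}'(t)$ is applicable; both are routine. The only delicate point, already isolated in the hypothesis, is to exclude the degenerate case $\varphi_{u_0,v_0}''(1)=0$, which would make the multiplier $\theta$ undetermined.
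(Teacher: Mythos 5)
Your proof is correct and follows essentially the same route as the paper: you introduce the same constraint functional (the paper calls it $Q$), show that $\langle Q'(u_0,v_0),(u_0,v_0)\rangle=\varphi_{u_0,v_0}''(1)\neq 0$ (the paper does this by a direct power count plus the Nehari identity, you via the scaling identity $Q(tu,tv)=t\,\varphi_{u,v}'(t)$ — the same computation), and then conclude by the Lagrange multiplier rule that the multiplier vanishes. No issues.
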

\begin{proof}
The proof is a standard corollary of the lagrange multiplier rule where the constraint is 
$$
Q(u,v)=\|(u,v)\|^p-\int_\Omega (\lambda |u|^{q}+\mu |v|^{q})dx -2\int_\Omega  |u|^{\alpha}|v|^\beta dx,
$$
after observing that, for $(u,v)\in \mathcal{N}_{\lambda,\mu}$, then
\begin{align*}
\langle Q'(u,v), (u,v)\rangle&=p\|(u,v)\|^p-q\int_\Omega (\lambda |u|^{q}+\mu |v|^{q})dx -2(\alpha+\beta)\int_\Omega  |u|^{\alpha}|v|^\beta dx \\
&=(p-1)\|(u,v)\|^p-(q-1)\int_\Omega (\lambda |u|^{q}+\mu|v|^q)dx-2(\alpha+\beta-1) \int_\Omega  |u|^{\alpha}|v|^\beta dx=\varphi_{u,v}''(1)\neq 0,
\end{align*}
by the assumption that $(u,v)\not\in \mathcal{N}_{\lambda,\mu}^0$. 
\end{proof}

\noindent
In order to understand the Nehari manifold and the fibering maps, we consider $\Psi_{u,v}: \mathbb{R}^+\to \mathbb{R}$ defined by
\begin{equation*}
\Psi_{u,v}(t):=t^{p-(\alpha+\beta)}\|(u,v)\|^p-t^{q-(\alpha+\beta)}\int_\Omega (\lambda|u|^{q}+\mu|v|^q) dx.
\end{equation*}

\noindent
By simple computations, we have the following results.

\begin{lemma}[Properties of $\Psi_{u,v}$]
	\label{frib0}
Let $(u,v)\in E\setminus \{(0,0)\}$. Then $\Psi_{u,v}$ satisfies the following properties

\quad (a) $\Psi_{u,v}(t)$ has a unique critical point at 
$$
t_{\max}(u,v):=\left(\frac{(\alpha+\beta-q)\displaystyle\int_\Omega (\lambda|u|^{q}+\mu|v|^q) dx}{(\alpha+\beta-p) \|(u,v)\|^p}\right)^{\frac{1}{p-q}}>0;
$$

\quad (b) $\Psi_{u,v}(t)$ is strictly increasing on $(0,t_{\maxx}(u,v))$ and strictly decreasing on $(t_{\maxx}(u,v),+\infty)$;

\quad (c) $\lim\limits_{t\to0^+}\Psi_{u,v}(t)=-\infty$, $\lim\limits_{t\to+\infty}\Psi_{u,v}(t)=0$.
\end{lemma}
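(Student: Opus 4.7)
The plan is purely computational: the function $\Psi_{u,v}$ is a combination of two power functions whose exponents have unambiguous signs, so every assertion reduces to reading off signs. The key observation to record at the outset is that by hypothesis $1<q<p<\alpha+\beta=p_s^\ast$, which gives the sign pattern
\[
p-(\alpha+\beta)<0,\qquad q-(\alpha+\beta)<0,\qquad p-q>0.
\]
Moreover, for $(u,v)\in E\setminus\{(0,0)\}$ both coefficients $\|(u,v)\|^p$ and $\int_\Omega(\lambda|u|^q+\mu|v|^q)\,dx$ are strictly positive (since $\lambda,\mu>0$ and an element of $X_0$ which vanishes a.e.\ on $\Omega$ is zero); this ensures that the formula for $t_{\max}(u,v)$ is well defined and strictly positive. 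This last point is the only non-automatic observation — everything afterwards is algebra.

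For part (c), I would note that as $t\to 0^+$ both power terms tend to $+\infty$, but the second has a more negative exponent ($q-(\alpha+\beta)<p-(\alpha+\beta)<0$), so it dominates and $\Psi_{u,v}(t)\sim -t^{q-(\alpha+\beta)}\int_\Omega(\lambda|u|^q+\mu|v|^q)\,dx\to-\infty$. As $t\to+\infty$ each term has a negative exponent and thus tends to $0$, yielding $\Psi_{u,v}(t)\to 0$.

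For parts (a) and (b), I would differentiate:
\[
\Psi_{u,v}'(t) = t^{q-(\alpha+\beta)-1}\Bigl[(p-(\alpha+\beta))\,t^{p-q}\|(u,v)\|^p - (q-(\alpha+\beta))\int_\Omega(\lambda|u|^q+\mu|v|^q)\,dx\Bigr].
\]
The prefactor $t^{q-(\alpha+\beta)-1}$ is positive for $t>0$, so the sign of $\Psi_{u,v}'(t)$ is carried by the bracket. The bracket is an affine function of $s:=t^{p-q}$: its value at $s=0$ equals $(\alpha+\beta-q)\int_\Omega(\lambda|u|^q+\mu|v|^q)\,dx>0$, and its slope $(p-(\alpha+\beta))\|(u,v)\|^p$ is strictly negative. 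Hence the bracket has a unique zero in $s$, which after using $s=t^{p-q}$ and $p-q>0$ gives precisely the $t_{\max}(u,v)$ in the statement, with the bracket positive on $(0,t_{\max})$ and negative on $(t_{\max},+\infty)$. This simultaneously proves (a) (unique critical point with the claimed formula) and (b) (strict monotonicity on either side).

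There is no substantive obstacle; the only pitfall is keeping careful track of the signs, particularly when moving the factor $(p-(\alpha+\beta))$ to the denominator in the expression for $t_{\max}$, where the two minus signs cancel to produce the $(\alpha+\beta-p)$ and $(\alpha+\beta-q)$ displayed in the statement.
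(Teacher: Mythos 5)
Your proof is correct and complete: the sign analysis of the two power terms, the positivity of both coefficients for $(u,v)\neq(0,0)$, and the reduction of $\Psi_{u,v}'$ to an affine function of $s=t^{p-q}$ all check out and yield (a), (b), (c) exactly as stated. The paper itself omits the argument entirely (``by simple computations''), and your write-up is precisely the elementary computation the authors had in mind.
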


\begin{lemma}[Characterization of $\mathcal{N}_{\lambda,\mu}^\pm$]
	\label{firb1}
We have $(tu,tv)\in\mathcal{N}_{\lambda,\mu}^\pm$ if and only if $\pm \Psi_{u,v}'(t)>0$.
\end{lemma}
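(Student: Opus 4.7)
The plan is to chase the relationship between $\varphi_{u,v}$ and $\Psi_{u,v}$ by a direct algebraic identification. First I would observe a simple scaling identity: since $\varphi_{tu,tv}(s)=J_{\lambda,\mu}(stu,stv)=\varphi_{u,v}(st)$, differentiating twice in $s$ and setting $s=1$ gives $\varphi_{tu,tv}''(1)=t^{2}\varphi_{u,v}''(t)$, so the sign that defines membership in $\mathcal{N}_{\lambda,\mu}^{\pm}$ (namely the sign of $\varphi_{tu,tv}''(1)$) is the same as the sign of $\varphi_{u,v}''(t)$ for $t>0$.

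Next I would use the explicit expression \eqref{nehar2} to factor
\[
\varphi_{u,v}'(t)=t^{\alpha+\beta-1}\Bigl[\Psi_{u,v}(t)-2\int_{\Omega}|u|^{\alpha}|v|^{\beta}\,dx\Bigr],
\]
which is immediate from the definitions of $\varphi_{u,v}$ and $\Psi_{u,v}$. Differentiating this product in $t$ yields
\[
\varphi_{u,v}''(t)=(\alpha+\beta-1)t^{\alpha+\beta-2}\Bigl[\Psi_{u,v}(t)-2\!\int_{\Omega}|u|^{\alpha}|v|^{\beta}\,dx\Bigr]+t^{\alpha+\beta-1}\Psi_{u,v}'(t).
\]

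Now, by Lemma \ref{lem2}, $(tu,tv)\in\mathcal{N}_{\lambda,\mu}$ is equivalent to $\varphi_{u,v}'(t)=0$, which by the factorization above is equivalent to the bracket $\Psi_{u,v}(t)-2\int_\Omega|u|^\alpha|v|^\beta\,dx$ vanishing. Plugging this vanishing into the formula for $\varphi_{u,v}''(t)$ collapses the first summand and leaves
\[
\varphi_{u,v}''(t)=t^{\alpha+\beta-1}\Psi_{u,v}'(t).
\]
Combined with the scaling identity, this gives $\varphi_{tu,tv}''(1)=t^{\alpha+\beta+1}\Psi_{u,v}'(t)$, and since $t>0$, the sign of $\varphi_{tu,tv}''(1)$ equals the sign of $\Psi_{u,v}'(t)$, which is exactly the claim.

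I do not expect any serious obstacle: the proof is a one-step algebraic identification made possible by the careful definition of $\Psi_{u,v}$, which was precisely engineered so that the factor $t^{\alpha+\beta-1}$ in $\varphi_{u,v}'(t)$ could be pulled out. The only point to be careful about is remembering that the classification is with respect to $\varphi_{tu,tv}''(1)$ and not $\varphi_{u,v}''(t)$, which is why the short scaling remark at the outset is needed.
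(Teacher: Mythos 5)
Your proposal is correct and follows essentially the same route as the paper: the paper's proof rests on exactly the identity $t^{\alpha+\beta-1}\Psi_{u,v}'(t)=\varphi_{u,v}''(t)=t^{-2}\varphi_{tu,tv}''(1)$ for $(tu,tv)\in\mathcal{N}_{\lambda,\mu}$ (its equation \eqref{nehar4ae}), which you derive via the factorization of $\varphi_{u,v}'$ and the scaling remark. Your write-up merely spells out the product-rule computation that the paper leaves implicit.
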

\begin{proof}
It is clear that for $t>0$, $(tu,tv)\in\mathcal{N}_{\lambda,\mu}$ if and only if
\begin{equation}\label{nehar40}
\Psi_{u,v}(t)=2\int_\Omega  |u|^{\alpha} |v|^\beta dx.
\end{equation}
Moreover,
\begin{equation*}
\Psi_{u,v}'(t)=(p-(\alpha+\beta))t^{p-(\alpha+\beta)-1}\|(u,v)\|^p- (q-(\alpha+\beta))t^{q-(\alpha+\beta)-1}\int_\Omega (\lambda|u|^{q}+\mu|v|^q) dx,
\end{equation*}
and if $(tu,tv)\in \mathcal{N}_{\lambda,\mu}$, then
\begin{equation}\label{nehar4ae}
t^{\alpha+\beta-1}\Psi_{u,v}'(t)=\varphi_{u,v}''(t)=t^{-2}\varphi''_{tu,tv}(1).
\end{equation}
Hence, $(tu,tv)\in\mathcal{N}_{\lambda,\mu}^+$ (resp.\ $\mathcal{N}_{\lambda,\mu}^-$) if and only if $\Psi_{u,v}'(t)>0$ (resp.\ $<0$).
\end{proof}

\begin{lemma}[Elements of $\mathcal{N}_{\lambda,\mu}^\pm$]
	\label{firb2}
	Let us set
	\begin{equation}\label{lambda1}
	\Lambda_1=\left(\frac{p-q}{2(\alpha+\beta-q)} \right)^{\frac{p}{\alpha+\beta-p}}\left(\frac{ \alpha+\beta-q}{\alpha+\beta-p}|\Omega|^{\frac{\alpha+\beta-q}{\alpha+\beta}}\right)^{-\frac{p}{p-q}}S^{\frac{\alpha+\beta}{\alpha+\beta-p}+\frac{q}{p-q}},
	\end{equation}
being $S$ the best constant for the Sobolev embedding of $X_0$ into $L^{p^*_s}({\mathbb R}^n)$.
	If $(u,v)\in E\backslash\{(0,0)\}$, then for any 
	$$
	0<\lambda^{\frac{p}{p-q}}+\mu^{\frac{p}{p-q}}<\Lambda_1,
	$$ 
	there are unique $t_1,t_2>0$ such that $t_1<t_{\max}(u,v)<t_2$ and
	$$
	(t_1 u,t_1v)\in\mathcal{N}_{\lambda,\mu}^+\quad \mbox{and}\quad (t_2 u,t_2v)\in\mathcal{N}_{\lambda,\mu}^-.
	$$
	Moreover,
	$$
	J_{\lambda,\mu}(t_1u, t_1v)=\inf\limits_{0\leq t\leq t_{\max}}J_{\lambda,\mu}(tu,tv),\qquad
	J_{\lambda,\mu}(t_2u, t_2v)=\sup\limits_{ t\geq 0}J_{\lambda,\mu}(tu,tv).
	$$
\end{lemma}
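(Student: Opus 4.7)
The plan is to leverage Lemmas~\ref{frib0}--\ref{firb1} to reduce everything to a scalar equation on the half-line. By Lemma~\ref{firb1}, for $t>0$ the membership $(tu,tv)\in\mathcal{N}_{\lambda,\mu}$ is equivalent to
\begin{equation*}
\Psi_{u,v}(t)=2\int_\Omega|u|^{\alpha}|v|^{\beta}\,dx,
\end{equation*}
while the sign of $\Psi_{u,v}'(t)$ distinguishes $\mathcal{N}_{\lambda,\mu}^{+}$ from $\mathcal{N}_{\lambda,\mu}^{-}$. By Lemma~\ref{frib0}, $\Psi_{u,v}$ is a one-humped curve: it strictly increases on $(0,t_{\maxx}(u,v))$ from $-\infty$ to $\Psi_{u,v}(t_{\maxx}(u,v))$ and then strictly decreases to $0$ as $t\to+\infty$. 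Hence it suffices to prove
\begin{equation*}
\Psi_{u,v}(t_{\maxx}(u,v)) > 2\int_\Omega|u|^{\alpha}|v|^{\beta}\,dx,
\end{equation*}
from which existence, uniqueness and ordering $t_{1}<t_{\maxx}(u,v)<t_{2}$ of the two intersections follow, together with $(t_{1}u,t_{1}v)\in\mathcal{N}_{\lambda,\mu}^{+}$ and $(t_{2}u,t_{2}v)\in\mathcal{N}_{\lambda,\mu}^{-}$, since $\Psi_{u,v}'>0$ on $(0,t_{\maxx})$ and $\Psi_{u,v}'<0$ on $(t_{\maxx},+\infty)$.

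For the key inequality, I would substitute the explicit expression of $t_{\maxx}(u,v)$ from Lemma~\ref{frib0}(a) into the definition of $\Psi_{u,v}$ and, after a short simplification, obtain
\begin{equation*}
\Psi_{u,v}(t_{\maxx}(u,v))=\frac{p-q}{\alpha+\beta-q}\|(u,v)\|^{p}\left(\frac{(\alpha+\beta-p)\|(u,v)\|^{p}}{(\alpha+\beta-q)\int_\Omega(\lambda|u|^{q}+\mu|v|^{q})\,dx}\right)^{\frac{\alpha+\beta-p}{p-q}}.
\end{equation*}
Then I would estimate $\int_\Omega|u|^{\alpha}|v|^{\beta}dx\le S^{-(\alpha+\beta)/p}\|(u,v)\|^{\alpha+\beta}$ from the Sobolev inequality \eqref{criticalfrac} and the trivial bounds $\|u\|_{X_{0}},\|v\|_{X_{0}}\le\|(u,v)\|$, and bound the $L^{q}$ term via H\"older (introducing the factor $|\Omega|^{(\alpha+\beta-q)/(\alpha+\beta)}$) and Sobolev, followed by the discrete H\"older inequality with exponents $p/(p-q)$ and $p/q$ applied to the pairs $(\lambda,\mu)$ and $(\|u\|_{X_{0}}^{q},\|v\|_{X_{0}}^{q})$, yielding
\begin{equation*}
\int_\Omega(\lambda|u|^{q}+\mu|v|^{q})\,dx \le S^{-q/p}|\Omega|^{\frac{\alpha+\beta-q}{\alpha+\beta}}\bigl(\lambda^{\frac{p}{p-q}}+\mu^{\frac{p}{p-q}}\bigr)^{\frac{p-q}{p}}\|(u,v)\|^{q}.
\end{equation*}
Because $1+\frac{\alpha+\beta-p}{p-q}\bigl(1-\frac{q}{p}\bigr)=\frac{\alpha+\beta}{p}$, the powers of $\|(u,v)\|$ cancel identically, and the target inequality reduces, after rearranging the exponents of $S$, $|\Omega|$ and the combinatorial constants, exactly to $\lambda^{p/(p-q)}+\mu^{p/(p-q)}<\Lambda_{1}$ with $\Lambda_{1}$ as in \eqref{lambda1}.

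The two extremum identities then follow from the relation $\varphi_{u,v}'(t)=t^{\alpha+\beta-1}\bigl[\Psi_{u,v}(t)-2\int_\Omega|u|^{\alpha}|v|^{\beta}dx\bigr]$, which combined with the shape of $\Psi_{u,v}$ gives $\varphi_{u,v}'<0$ on $(0,t_{1})$, $\varphi_{u,v}'>0$ on $(t_{1},t_{2})$ and $\varphi_{u,v}'<0$ on $(t_{2},+\infty)$. Thus $t_{1}$ minimizes $\varphi_{u,v}$ on $[0,t_{\maxx}(u,v)]$, giving the first identity; and $t_{2}$ is the only interior critical point of $\varphi_{u,v}$ on $[0,+\infty)$ after which the map strictly decreases to $-\infty$, so it is the global maximizer on $[0,+\infty)$, the only competitor being the endpoint $t=0$ with $\varphi_{u,v}(0)=0$, which is dominated using the $\mathcal{N}_{\lambda,\mu}^{-}$ condition at $t_{2}$ together with the quantitative smallness of $\lambda,\mu$.

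The main obstacle I foresee is the careful bookkeeping of exponents in the algebraic reduction of $\Psi_{u,v}(t_{\maxx}(u,v))>2\int_\Omega|u|^{\alpha}|v|^{\beta}dx$ to the sharp form $\lambda^{p/(p-q)}+\mu^{p/(p-q)}<\Lambda_{1}$: one has to verify simultaneously that the $\|(u,v)\|$-homogeneity cancels identically (which, as noted, hinges on the identity $1+\sigma(1-q/p)=(\alpha+\beta)/p$ with $\sigma=(\alpha+\beta-p)/(p-q)$) and that the numeric constant produced matches \eqref{lambda1} on the nose. The argument is purely algebraic but delicate, because the exponents of $S$, of $|\Omega|$ and of $(\alpha+\beta-p)/(\alpha+\beta-q)$ must all land on their prescribed values.
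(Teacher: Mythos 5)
Your proposal is correct and follows essentially the same route as the paper: reduce membership in $\mathcal{N}_{\lambda,\mu}^{\pm}$ to the scalar equation $\Psi_{u,v}(t)=2\int_\Omega|u|^{\alpha}|v|^{\beta}dx$ via Lemmas \ref{frib0}--\ref{firb1}, compute $\Psi_{u,v}(t_{\max})$ explicitly, bound the $L^q$ term by H\"older/Sobolev and the coupling term by Young/Sobolev, and observe that the $\|(u,v)\|$-homogeneity cancels so that the strict inequality $\Psi_{u,v}(t_{\max})>2\int_\Omega|u|^{\alpha}|v|^{\beta}dx$ is exactly the condition $\lambda^{p/(p-q)}+\mu^{p/(p-q)}<\Lambda_1$. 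The extremum identities are then read off from the sign of $\varphi_{u,v}'(t)=t^{\alpha+\beta-1}\bigl[\Psi_{u,v}(t)-2\int_\Omega|u|^{\alpha}|v|^{\beta}dx\bigr]$, just as in the paper (and you are, if anything, slightly more careful than the paper in flagging the comparison with the endpoint $t=0$ for the supremum claim).
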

\begin{proof}
		As $\int_\Omega  |u|^{\alpha} |v|^\beta dx>0$, we know that (\ref{nehar40}) has {\em no} solution iff $\lambda$ and $\mu$ satisfy the following condition
		\begin{eqnarray*}
			2\int_\Omega  |u|^{\alpha} |v|^\beta dx > \Psi_{u,v}(t_{\max}(u,v)).
		\end{eqnarray*}
		By Lemma \ref{frib0}, we have
		\begin{align*} 
			\Psi_{u,v}(t_{\max}(u,v)) 
			&= \Big[\left(\frac{\alpha+\beta-q}{ \alpha+\beta-p }\right)^{\frac{p-(\alpha+\beta)}{p-q}}
			-\left(\frac{\alpha+\beta-q}{\alpha+\beta-p}\right)^{\frac{q-(\alpha+\beta)}{p-q}}\Big]
			\frac{\Big(\displaystyle\int_\Omega (\lambda|u|^{q}+\mu|v|^q) dx\Big)^{\frac{p-(\alpha+\beta)}{p-q}}}{\|(u,v)\|^{\frac{p(q-(\alpha+\beta))}{p-q}}}\nonumber\\
			&=  \frac{p-q}{\alpha+\beta-q}\left(\frac{\alpha+\beta-q}{ \alpha+\beta-p }\right)^{\frac{p-(\alpha+\beta)}{p-q}}\frac{\Big(\displaystyle\int_\Omega (\lambda|u|^{q}+\mu|v|^q) dx\Big)^{\frac{p-(\alpha+\beta)}{p-q}}}{\|(u,v)\|^{\frac{p(q-(\alpha+\beta))}{p-q}}}.
		\end{align*}
		By H\"{o}lder inequality and   the definition of $S$, we find
		$$
		\int_\Omega (\lambda|u|^{q}+\mu|v|^q)dx\leq S^{-\frac{q}{p}}|\Omega|^{\frac{\alpha+\beta-q}{\alpha+\beta}}\left(\lambda^{\frac{p}{p-q}}+\mu^{\frac{p}{p-q}}\right)^{\frac{p-q}{p}}\|(u,v)\|^q.
		$$
		Then, since $q<p<\alpha+\beta=p_s^\ast$, we have
		\begin{align}\label{nehar40a}
		& \Psi_{u,v}(t_{\max}(u,v))\nonumber\\
		&\geq  \frac{p-q}{\alpha+\beta-q}\left(\frac{\alpha+\beta-q}{ \alpha+\beta-p }\right)^{\frac{p-(\alpha+\beta)}{p-q}}\frac{\left[S^{-\frac{q}{p}}|\Omega|^{\frac{\alpha+\beta-q}{\alpha+\beta}}\left(\lambda^{\frac{p}{p-q}}+\mu^{\frac{p}{p-q}}\right)^{\frac{p-q}{p}}\|(u,v)\|^q\right]^{\frac{p-(\alpha+\beta)}{p-q}}}{\|(u,v)\|^{\frac{p(q-(\alpha+\beta))}{p-q}}}\nonumber\\
		&=  \frac{p-q}{\alpha+\beta-q}\left(\frac{\alpha+\beta-q}{ \alpha+\beta-p }\right)^{\frac{p-(\alpha+\beta)}{p-q}}\left[S^{-\frac{q}{p}}|\Omega|^{\frac{\alpha+\beta-q}{\alpha+\beta}}\right]^{\frac{p-(\alpha+\beta)}{p-q}}\left(\lambda^{\frac{p}{p-q}}+\mu^{\frac{p}{p-q}}\right)^{\frac{p-(\alpha+\beta)}{p}}\|(u,v)\|^{\alpha+\beta}.
		\end{align}
		On the other hand, using Young inequality and the definition of $S$, it holds that 
		\begin{equation*}
		2\int_\Omega  |u|^{\alpha} |v|^\beta dx \leq 2\left(\frac{\alpha}{\alpha+\beta}
		\int_{\Omega}|u|^{\alpha+\beta}dx+\frac{\beta}{\alpha+\beta}
		\int_{\Omega}|v|^{\alpha+\beta}dx\right)\leq 2 S^{-\frac{\alpha+\beta}{p}}\|(u,v)\|^{\alpha+\beta}.
		\end{equation*}
		For any  $\lambda,\mu$ satisfying 
		$0<\lambda^{\frac{p}{p-q}}+\mu^{\frac{p}{p-q}}<\Lambda_1$
		with $\Lambda_1$ given in \eqref{lambda1}, we have
		\begin{align}\label{nehar40abc0}
		2 S^{-\frac{\alpha+\beta}{p}} 
		&\leq\frac{p-q}{\alpha+\beta-q}\left(\frac{\alpha+\beta-q}{ \alpha+\beta-p }\right)^{\frac{p-(\alpha+\beta)}{p-q}}\left[S^{-\frac{q}{p}}|\Omega|^{\frac{\alpha+\beta-q}{\alpha+\beta}}\right]^{\frac{p-(\alpha+\beta)}{p-q}}\nonumber\\&\quad \times \left(\lambda^{\frac{p}{p-q}}+\mu^{\frac{p}{p-q}}\right)^{\frac{p-(\alpha+\beta)}{p}}.
		\end{align}
		Thus, from (\ref{nehar40a})-(\ref{nehar40abc0}), if $\lambda,\mu$ satisfy 
		$0<\lambda^{\frac{p}{p-q}}+\mu^{\frac{p}{p-q}}<\Lambda_1$, we have
		\begin{align*}
		0<2\int_\Omega  |u|^{\alpha} |v|^\beta dx &\leq   2 S^{-\frac{\alpha+\beta}{p}}\|(u,v)\|^{\alpha+\beta}
		\nonumber\\
		&\leq\frac{p-q}{\alpha+\beta-q}\left(\frac{\alpha+\beta-q}{ \alpha+\beta-p }\right)^{\frac{p-(\alpha+\beta)}{p-q}}\left[S^{-\frac{q}{p}}|\Omega|^{\frac{\alpha+\beta-q}{\alpha+\beta}}\right]^{\frac{p-(\alpha+\beta)}{p-q}} \\&\quad \times \left(\lambda^{\frac{p}{p-q}}+\mu^{\frac{p}{p-q}}\right)^{\frac{p-(\alpha+\beta)}{p}}\|(u,v)\|^{\alpha+\beta}  \\
		&<\Psi_{u,v}(t_{\max}(u,v)).
		\end{align*}
		Then, there exist unique $t_1>0$ and $t_2>0$ with $t_1<t_{\max}(u,v)<t_2$, such that
		$$
		\Psi_{u,v}(t_1)=\Psi_{u,v}(t_2)=2\int_\Omega  |u|^{\alpha} |v|^\beta dx,\quad  \Psi_{u,v}'(t_1)>0,\ \ \ \Psi_{u,v}'(t_2)<0.
		$$
      In turn, \eqref{nehar2} and \eqref{nehar40} give that
		$
		\varphi_{u,v}'(t_1)=\varphi_{u,v}'(t_2)=0.
		$
		By (\ref{nehar4ae}) we have that
		$\varphi_{u,v}''(t_1)>0$ and $\varphi_{u,v}''(t_2)<0$.
		These facts imply that $\varphi_{u,v}$ 
		has a local minimum at $t_1$ and a local maximum at $t_2$ such that $ (t_1 u,t_1v)\in\mathcal{N}_{\lambda,\mu}^+$ and $ (t_2 u,t_2v)\in\mathcal{N}_{\lambda,\mu}^-$,. Since $\varphi_{u,v}(t)= J_{\lambda,\mu}(tu,tv)$, we have $J_{\lambda,\mu}(t_2u,t_2v)\geq J_{\lambda,\mu}(tu,tv)\geq J_{\lambda,\mu}(t_1u,t_1v)$ for each $t\in[t_1,t_2]$ and
		$J_{\lambda,\mu}(t_1u,t_1v)\leq J_{\lambda,\mu}(t u,t v)$ for each $t\in[0,t_1]$. Thus
		$$
		J_{\lambda,\mu}(t_1u, t_1v)=\inf\limits_{0\leq t\leq t_{\max}}J_{\lambda,\mu}(tu,tv),\qquad
		J_{\lambda,\mu}(t_2u, t_2v)=\sup\limits_{ t\geq 0}J_{\lambda,\mu}(tu,tv).
		$$
		The graphs of $\Psi_{u,v}$ and $\varphi_{u,v}$ can be seen in Figure 1.
\end{proof}

%
%
%
%
%


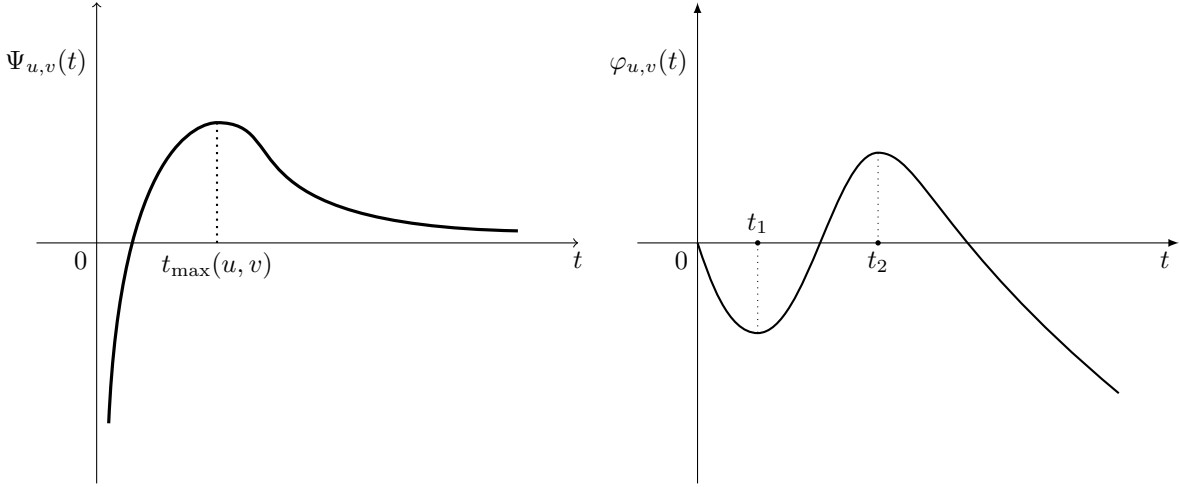
\begin{figure}
	\label{figura1}
	\centering
	\begin{tikzpicture}[scale=0.8]
	\draw[->] (-1, 0) -- (8, 0);
	\draw[->] (0, -4) -- (0, 4);
	\draw[very thick] (0.2 ,-3) .. controls (0.4,1.6) and (1.7,2) .. (2, 2) .. controls (3.3, 2) and (2, 0.3) .. (7, 0.2); 
	\draw[thick, dotted] (2,0) -- (2,2);
	
	\draw (8,0) node[below]{$t$};
	\draw (0, 0) node[below left]{$0$};
	\draw (0,3) node[left]{$\Psi_{u, v}(t)$};
	\draw (2,0) node[below]{$t_{\rm max}(u,v)$};
	\end{tikzpicture}
	\begin{tikzpicture}[scale=0.8]
	\draw[-latex] (-1, 0) -- (8, 0);
	\draw[-latex] (0, -4) -- (0, 4);
	\draw[thick] (0 ,0) .. controls (0.2,-0.6) and (0.5,-1.5) .. (1, -1.5) .. controls (1.8, -1.5) and (2.3, 1.5) .. (3, 1.5) .. controls (3.7, 1.5) and (4, 0)  .. (7, -2.5); 
	\draw[dotted] (1,0) -- (1,-1.5);
	\draw[dotted] (3,0) -- (3, 1.5);
	\filldraw (3,0) circle (1pt);
	\filldraw (1,0) circle (1pt);
	\draw (8,0) node[below left]{$t$};
	\draw (0, 0) node[below left]{$0$};
	\draw (0,3) node[left]{$\varphi_{u, v}(t)$};
	\draw (1,0) node[above]{$t_1$};
	\draw (3,0) node[below]{$t_2$};
	\end{tikzpicture}
	\caption{The graphs of $\Psi_{u,v}$ and $\varphi_{u,v}$}
\end{figure}

\section{The Palais-Smale condition}\label{ps}

In this section, we show that the functional $J_{\lambda,\mu}$ satisfies $(PS)_c$ condition.

\begin{definition}\label{defps}
Let $c\in\mathbb{R}$ , $E$ be a Banach space and $J_{\lambda,\mu}\in  C^1(E,\mathbb{R})$.
$\{(u_k,v_k)\}_{k\in {\mathbb N}}$ is a $(PS)_c$ sequence in $E$ for $J_{\lambda,\mu}$ 
if $J_{\lambda,\mu}(u_k,v_k)=c+o(1)$ and $J_{\lambda,\mu}'(u_k,v_k)=o(1)$ strongly in $E^\ast$ as $k\to\infty$.
We say that $J_{\lambda,\mu}$ satisfies the $(PS)_c$ condition if any $(PS)_c$ sequence 
$\{(u_k,v_k)\}_{k\in {\mathbb N}}$ for $J_{\lambda,\mu}$ in $E$ admits a convergent subsequence.
\end{definition}

\begin{lemma}[Boundedness of $(PS)_c$ sequences]
	\label{ps3}
If $\{(u_k,v_k)\}_{k\in {\mathbb N}}\subset E$ is a $(PS)_c$ sequence for $J_{\lambda,\mu}$, then $\{(u_k,v_k)\}_{k\in {\mathbb N}}$ is bounded in $E$.
\end{lemma}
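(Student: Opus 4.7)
The plan is to exploit the standard trick of testing the derivative against $(u_k,v_k)$ and combining with the energy identity so as to eliminate the critical term, which is the only term that could prevent coercivity.

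More precisely, I would start from the two $(PS)_c$ defining properties, which read
\begin{align*}
J_{\lambda,\mu}(u_k,v_k)&=\frac{1}{p}\|(u_k,v_k)\|^p-\frac{1}{q}\int_\Omega(\lambda|u_k|^q+\mu|v_k|^q)\,dx-\frac{2}{\alpha+\beta}\int_\Omega|u_k|^\alpha|v_k|^\beta\,dx=c+o(1),\\
\langle J_{\lambda,\mu}'(u_k,v_k),(u_k,v_k)\rangle&=\|(u_k,v_k)\|^p-\int_\Omega(\lambda|u_k|^q+\mu|v_k|^q)\,dx-2\int_\Omega|u_k|^\alpha|v_k|^\beta\,dx=o(1)\,\|(u_k,v_k)\|.
\end{align*}
Then I would form the combination $J_{\lambda,\mu}(u_k,v_k)-\frac{1}{\alpha+\beta}\langle J_{\lambda,\mu}'(u_k,v_k),(u_k,v_k)\rangle$. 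The coupling $|u|^\alpha|v|^\beta$ term cancels exactly, leaving
\begin{equation*}
\Bigl(\frac{1}{p}-\frac{1}{\alpha+\beta}\Bigr)\|(u_k,v_k)\|^p-\Bigl(\frac{1}{q}-\frac{1}{\alpha+\beta}\Bigr)\int_\Omega(\lambda|u_k|^q+\mu|v_k|^q)\,dx=c+o(1)+o(1)\,\|(u_k,v_k)\|.
\end{equation*}
Since $q<p<\alpha+\beta$, both bracketed coefficients are strictly positive.

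Next I would estimate the subcritical concave term from above. By H\"older's inequality and the continuous embedding $X_0\hookrightarrow L^{p_s^\ast}(\Omega)$ (with constant governed by $S$), together with the elementary bound $\|u\|_{X_0}^q+\|v\|_{X_0}^q\le C\|(u,v)\|^q$, there exists a constant $C=C(p,q,s,n,|\Omega|,\lambda,\mu)>0$ such that
\begin{equation*}
\int_\Omega(\lambda|u_k|^q+\mu|v_k|^q)\,dx\le C\,\|(u_k,v_k)\|^q.
\end{equation*}
Plugging this into the previous identity yields
\begin{equation*}
\Bigl(\frac{1}{p}-\frac{1}{\alpha+\beta}\Bigr)\|(u_k,v_k)\|^p\le c+o(1)+o(1)\,\|(u_k,v_k)\|+C\,\|(u_k,v_k)\|^q.
\end{equation*}

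Finally, since $q<p$ (and the error term is linear in the norm while the left-hand side is of order $p>1$), a standard Young-type argument shows that $\|(u_k,v_k)\|$ cannot diverge, proving boundedness of $\{(u_k,v_k)\}_{k\in\mathbb N}$ in $E$. The only step requiring any care is the cancellation of the critical coupling term, which is automatic thanks to the homogeneity of degree $\alpha+\beta$ of $\int_\Omega|u|^\alpha|v|^\beta\,dx$; there is no genuine obstacle in this lemma, as the compactness issue created by the critical exponent is postponed to the subsequent identification of the Palais-Smale threshold.
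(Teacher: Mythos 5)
Your proof is correct. It rests on the same algebraic cancellation as the paper's --- combining the energy identity with the derivative tested against $(u_k,v_k)$ so that the critical coupling term drops out --- but the two arguments are packaged differently. You run a direct coercivity estimate: after the cancellation you bound the concave term by $C\|(u_k,v_k)\|^q$ via H\"older and the continuous embedding $X_0\hookrightarrow L^{p_s^\ast}(\Omega)$, and conclude from $a\|(u_k,v_k)\|^p\le |c|+o(1)+o(1)\,\|(u_k,v_k)\|+C\|(u_k,v_k)\|^q$ with $q<p$ and $p>1$. The paper instead argues by contradiction: assuming $\|(u_k,v_k)\|\to\infty$, it normalizes $(\tilde u_k,\tilde v_k)=(u_k,v_k)/\|(u_k,v_k)\|$, extracts a weakly convergent subsequence, uses the compact embedding and dominated convergence to control $\int_\Omega(\lambda|\tilde u_k|^q+\mu|\tilde v_k|^q)\,dx$, and derives $\|(\tilde u_k,\tilde v_k)\|^p=O(\|(u_k,v_k)\|^{q-p})+o_k(1)\to 0$, contradicting $\|(\tilde u_k,\tilde v_k)\|=1$. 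Your version is the more economical of the two: it needs only the continuous Sobolev embedding and no extraction of subsequences or a.e.\ convergence, whereas the paper's normalization argument invokes compactness machinery that is not actually required at this stage. Both are complete proofs; there is no gap in yours.
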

\begin{proof}
If $\{(u_k,v_k)\}\subset E$ is a $(PS)_c$ sequence for $J_{\lambda,\mu}$, then we have
\begin{eqnarray*}
J_{\lambda,\mu}(u_k,v_k)\to c,\quad J'_{\lambda,\mu}(u_k,v_k)\to 0\ \ \ \mbox{in}\ \ E^\ast\quad \mbox{as}\ k\to\infty.
\end{eqnarray*}
That is,
\begin{align}\label{pscr4}
 \frac{1}{p}\|(u_k,v_k)\|^p-\frac{1}{q}\int_\Omega(\lambda|u_k|^q+\mu|v_k|^q)dx-\frac{2}{\alpha+\beta}\int_\Omega|u_k|^\alpha|v_k|^\beta dx&= c+o_k(1), \\
\label{pscr4ad}
 \|(u_k,v_k)\|^p- \int_\Omega(\lambda|u_k|^q+\mu|v_k|^q)dx-2\int_\Omega|u_k|^\alpha|v_k|^\beta dx&= o_k(\|(u_k,v_k)\|),
\end{align}
as $k\to\infty$.
We show that $(u_k,v_k)$ is bounded in $E$
by contradiction. Assume $\|(u_k,v_k)\|\to\infty$, set 
$$
\tilde{u}_k:=\frac{u_k}{\|(u_k,v_k)\|},  \,\,\quad
 \tilde{v}_k:=\frac{v_k}{\|(u_k,v_k)\|},
$$ 
then $\|(\tilde{u}_k,\tilde{v}_k)\|=1$. There is a subsequence, still denote by itself, 
with $(\tilde{u}_k,\tilde{v}_k)\rightharpoonup (\tilde{u},\tilde{v})\in E$ and
\begin{eqnarray*}
\tilde{u}_k\to \tilde{u},\ \ \tilde{v}_k\to \tilde{v}\quad \mbox{in} \ \ L^r(\mathbb{R}^n),\qquad
\tilde{u}_k\to \tilde{u},\ \ \tilde{v}_k\to \tilde{v}\ \ \mbox{a.e.\ in}\ \ \mathbb{R}^n,
\end{eqnarray*}
for any $1\leq  r<p_s^\ast=\frac{np}{n-ps}$.
Then, the Dominated Convergence Theorem yields
\begin{eqnarray}\label{pscr4a}
\int_\Omega(\lambda|\tilde{u}_k|^{q}+\mu|\tilde{v}_k|^q)dx\to \int_\Omega(\lambda|\tilde{u}|^{q}+\mu|\tilde{v}|^q)dx,\quad \mbox{as}\ \ k\to\infty.
\end{eqnarray}
Moreover, from \eqref{pscr4} and \eqref{pscr4ad}, we find that $({\tilde u_k},{\tilde v_k})$ satisfy
\begin{align*}
&\frac{1}{p}\|(\tilde{u}_k,\tilde{v}_k)\|^p-\frac{\|(u_k,v_k)\|^{q-p}}{q}
\int_\Omega(\lambda|\tilde{u}_k|^q+\mu|\tilde{v}_k|^q)dx-\frac{2\|(u_k,v_k)\|^{\alpha+\beta-p}}{\alpha+\beta}\int_\Omega|\tilde{u}_k|^\alpha|\tilde{v}_k|^\beta dx=o_k(1), \\
&\|(\tilde{u}_k,\tilde{v}_k)\|^p- \|(u_k,v_k)\|^{q-p}\int_\Omega(\lambda|\tilde{u}_k|^q+\mu|\tilde{v}_k|^q)dx-
2\|(u_k,v_k)\|^{\alpha+\beta-p}\int_\Omega|\tilde{u}_k|^\alpha|\tilde{v}_k|^\beta dx= o_k(1).
\end{align*}
From above two equalities and (\ref{pscr4a}), we obtain
\begin{align*}
\|(\tilde{u}_k,\tilde{v}_k)\|^p
&=\frac{p(\alpha+\beta-q)}{q(\alpha+\beta-p)}\|(u_k,v_k)\|^{q-p}\int_\Omega(\lambda|\tilde{u}_k|^q+\mu|\tilde{v}_k|^q)dx + o_k(1)\\
&= \frac{p(\alpha+\beta-q)}{q(\alpha+\beta-p)}\|(u_k,v_k)\|^{q-p}\int_\Omega(\lambda|\tilde{u}|^q+\mu|\tilde{v}|^q)dx  + o_k(1).
\end{align*}
Since $1<q<p$ and $\|(u_k,v_k)\|\to\infty$, then we get $\|(\tilde{u}_k,\tilde{v}_k)\|^p\to 0$,
which contradicts $\|(\tilde{u}_k,\tilde{v}_k)\|=1$.
\end{proof}

\begin{lemma}[Uniform lower bound]
	\label{ps2}
If $\{(u_k,v_k)\}_{k\in {\mathbb N}}$ is a $(PS)_c$ sequence for $J_{\lambda,\mu}$ with $(u_k,v_k)\rightharpoonup (u,v)$ in $E$, then $J'_{\lambda,\mu}(u,v)=0$, and there exists a positive constant $C_0$ depending on $p,q,s,n,S$ and $|\Omega|$ such that
\begin{eqnarray}\label{pseq1}
J_{\lambda,\mu}(u,v)\geq -C_0\left(\lambda^{\frac{p}{p-q}}+\mu^{\frac{p}{p-q}}\right), 
\end{eqnarray}
where we have set 
\begin{equation}
\label{cizero}
C_0:=
\frac{p-q}{pq p^*_s}\frac{(p_s^\ast-q)^{\frac{p}{p-q}}}{(p_s^\ast-p)^{\frac{q}{p-q}}}
|\Omega|^{\frac{p(p_s^\ast-q)}{ p_s^\ast (p-q)}}S^{-\frac{q}{p-q}},
\end{equation}
being $S$ the best constant for the Sobolev embedding of $X_0$ into $L^{p^*_s}({\mathbb R}^n)$.
\end{lemma}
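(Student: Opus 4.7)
The plan is to split the lemma into two independent claims: first, the weak limit $(u,v)$ is a critical point of $J_{\lambda,\mu}$, i.e.\ $J'_{\lambda,\mu}(u,v)=0$; second, the explicit lower bound \eqref{pseq1} is obtained by using the resulting Nehari-type identity to eliminate the critical integral from the energy and then minimizing in $\|(u,v)\|$.

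For the first claim, by density of $C_c^\infty(\Omega)\times C_c^\infty(\Omega)$ in $E$ it suffices to pass to the limit in $\langle J'_{\lambda,\mu}(u_k,v_k),(\phi,\psi)\rangle=o(1)$ against smooth compactly supported test pairs. Lemma \ref{ps3} bounds the sequence in $E$, so up to a subsequence $(u_k,v_k)\rightharpoonup(u,v)$ in $E$, $(u_k,v_k)\to(u,v)$ in $L^r(\Omega)\times L^r(\Omega)$ for every $r\in[1,p_s^\ast)$, and a.e.\ on $\mathbb{R}^n$. The subcritical pairings $\int\lambda|u_k|^{q-2}u_k\phi$ and $\int\mu|v_k|^{q-2}v_k\psi$ converge by the compact embedding $X_0\hookrightarrow L^q(\Omega)$. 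For the nonlocal part $\mathcal{A}(u_k,\phi)$, the integrand $|u_k(x)-u_k(y)|^{p-2}(u_k(x)-u_k(y))/|x-y|^{(n+ps)(p-1)/p}$ is bounded in $L^{p/(p-1)}(\mathbb{R}^{2n})$ and converges a.e.\ to the corresponding quantity for $u$, hence it converges weakly there; testing against $(\phi(x)-\phi(y))/|x-y|^{(n+ps)/p}\in L^p(\mathbb{R}^{2n})$ passes to the limit. For the critical coupling term, H\"older bounds $|u_k|^{\alpha-2}u_k|v_k|^\beta$ uniformly in $L^{p_s^\ast/(p_s^\ast-1)}(\Omega)$; the a.e.\ convergence upgrades this to weak convergence to $|u|^{\alpha-2}u|v|^\beta$ in that dual space, and pairing against $\phi\in L^{p_s^\ast}(\Omega)$ closes the argument.

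For the second claim, testing $\langle J'_{\lambda,\mu}(u,v),(u,v)\rangle=0$ yields
\begin{equation*}
\|(u,v)\|^p=\int_\Omega\bigl(\lambda|u|^q+\mu|v|^q\bigr)\,dx+2\int_\Omega|u|^\alpha|v|^\beta\,dx,
\end{equation*}
and substituting this identity into $J_{\lambda,\mu}(u,v)$ to eliminate the critical integral gives
\begin{equation*}
J_{\lambda,\mu}(u,v)=\Bigl(\tfrac{1}{p}-\tfrac{1}{p_s^\ast}\Bigr)\|(u,v)\|^p-\Bigl(\tfrac{1}{q}-\tfrac{1}{p_s^\ast}\Bigr)\int_\Omega\bigl(\lambda|u|^q+\mu|v|^q\bigr)\,dx.
\end{equation*}
I estimate the $q$-integral via H\"older and \eqref{criticalfrac} on each factor separately, and then apply H\"older on the two-term sum with conjugate exponents $p/(p-q)$ and $p/q$, producing
\begin{equation*}
\int_\Omega\bigl(\lambda|u|^q+\mu|v|^q\bigr)\,dx\leq S^{-q/p}|\Omega|^{(p_s^\ast-q)/p_s^\ast}\bigl(\lambda^{p/(p-q)}+\mu^{p/(p-q)}\bigr)^{(p-q)/p}\|(u,v)\|^q.
\end{equation*}
Setting $t=\|(u,v)\|^p$ and $\gamma=\lambda^{p/(p-q)}+\mu^{p/(p-q)}$, one thus bounds $J_{\lambda,\mu}(u,v)$ below by $f(t)=At-B\gamma^{(p-q)/p}t^{q/p}$ with $A,B>0$ explicit in $p,q,s,n,S,|\Omega|$. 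Since $q<p$, $f$ attains its global minimum at a unique $t_\ast>0$, and a direct computation identifies $\min f=-C_0\gamma$ with $C_0$ exactly as in \eqref{cizero}.

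The only subtle step is the first one: identifying the weak limit of the critical product $|u_k|^{\alpha-2}u_k|v_k|^\beta$ under the paired critical growth; once the a.e.\ convergence combined with the uniform $L^{p_s^\ast/(p_s^\ast-1)}$ bound yields weak convergence in that space, testing against $L^{p_s^\ast}$ functions closes the identification. Step 2 is elementary calculus on a power function.
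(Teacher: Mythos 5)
Your proposal is correct and follows essentially the same route as the paper: the identification of the weak limit as a critical point via the $L^{p'}$-weak convergence of the nonlocal kernels and the $L^{q'}$- and $L^{p_s^\ast/(p_s^\ast-1)}$-weak convergence of the nonlinear terms is the paper's argument, and the lower bound is the same Nehari substitution followed by the same H\"older/Sobolev estimate. Your explicit minimization of $f(t)=At-B\gamma^{(p-q)/p}t^{q/p}$ is just the sharp form of the Young inequality the paper uses to cancel the gradient term, and it reproduces the constant $C_0$ in \eqref{cizero} exactly.
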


\begin{proof}
If $\{(u_k,v_k)\}\subset E$ is a $(PS)_c$ sequence for $J_{\lambda,\mu}$ with $(u_k,v_k)\rightharpoonup (u,v)$ in $E$. That is
\begin{equation*}
J_{\lambda,\mu}'(u_k,v_k)=o(1)\quad \mbox{strongly \ in}\  E^\ast\ \ \mbox{as}\  k\to\infty.
\end{equation*}
Let $(\phi,\psi)\in E$, then it holds that
\begin{align*}
&\langle J_{\lambda,\mu}'(u_k,v_k)-J_{\lambda,\mu}'(u,v),(\phi,\psi)\rangle\nonumber\\
&=\mathcal{A}(u_k,\phi) -\mathcal{A}(u,\phi)+\mathcal{A}(v_k,\psi)-\mathcal{A}(v,\psi)\nonumber\\
&-\lambda \int_\Omega \left( |u_k|^{q-2}u_k-|u|^{q-2}u\right)\phi dx-\mu \int_\Omega \left( |v_k|^{q-2}v_k-|v|^{q-2}v\right)\psi dx\nonumber\\
&-\frac{2\alpha}{\alpha+\beta}\int_\Omega\left(|u_k|^{\alpha-2}u_k|v_k|^\beta-|u|^{\alpha-2}u|v|^\beta\right)\phi dx
-\frac{2\beta}{\alpha+\beta}\int_\Omega\left(|u_k|^{\alpha}|v_k|^{\beta-2}v_k-|u|^{\alpha}|v|^{\beta-2}v\right)\psi dx,
\end{align*}
where $\mathcal{A}$ is defined in (\ref{huaa}).
We claim that, from $(u_k,v_k)\rightharpoonup (u,v)$ in $E$, we  have 
$$
\lim_k\mathcal{A}(u_k,\phi)=\mathcal{A}(u,\phi),\qquad \lim_k\mathcal{A}(v_k,\psi)=\mathcal{A}(v,\psi),
$$
for any $\phi,\psi\in X_0$ as $k\to\infty$. In fact,  the sequences
$$
\left\{\frac{|u_k(x)-u_k(y)|^{p-2}(u_k(x)-u_k(y))}{|x-y|^{\frac{n+ps}{p'}}}\right\}_{k\in {\mathbb N}} \qquad
\left\{\frac{|v_k(x)-v_k(y)|^{p-2} (v_k(x)-v_k(y))}{|x-y|^{\frac{n+ps}{p'}}}\right\}_{k\in {\mathbb N}}
$$
are bounded in $L^{p'}({\mathbb R}^n)$ and by the poinwise converge $u_k\to u$ and $v_k\to v$, there holds
$$
\frac{|u_k(x)-u_k(y)|^{p-2}(u_k(x)-u_k(y))}{|x-y|^{\frac{n+ps}{p'}}}\overset{L^{p'}({\mathbb R}^n)}{\rightharpoonup}
\frac{|u(x)-u(y)|^{p-2}(u(x)-u(y))}{|x-y|^{\frac{n+ps}{p'}}},
$$
and
$$
\frac{|v_k(x)-v_k(y)|^{p-2}(v_k(x)-v_k(y))}{|x-y|^{\frac{n+ps}{p'}}}\overset{L^{p'}({\mathbb R}^n)}{\rightharpoonup}
\frac{|v(x)-v(y)|^{p-2}(v(x)-v(y))}{|x-y|^{\frac{n+ps}{p'}}}.
$$
Since
$$
\frac{\phi(x)-\phi(y)}{|x-y|^{\frac{n+ps}{p}}}\in L^{p}({\mathbb R}^n),\qquad
\frac{\psi(x)-\psi(y)}{|x-y|^{\frac{n+ps}{p}}}\in L^{p}({\mathbb R}^n),
$$
the claim follows. The sequences $u_k$ and $v_k$ are bounded in $X_0$, and then in $L^{p_s^\ast}(\Omega)$.
Then $u_k\to u$ and $v_k\to v$ weakly in $L^{p_s^\ast}(\mathbb{R}^n)$.
Furthermore,  we obtain
$$
|u_k|^{q-2}u_k\overset{L^{q'}(\Omega)}{\rightharpoonup} |u|^{q-2}u,\ \qquad 
|v_k|^{q-2}v_k\overset{L^{q'}(\Omega)}{\rightharpoonup} |v|^{q-2}v,
$$
$$
|u_k|^{\alpha-2}u_k|v_k|^\beta\overset{L^{\frac{\alpha+\beta}{\alpha+\beta-1}}(\Omega)}{\rightharpoonup} |u|^{\alpha-2}u|v|^\beta, \qquad
|u_k|^{\alpha}|v_k|^{\beta-2}v_k\overset{L^{\frac{\alpha+\beta}{\alpha+\beta-1}}(\Omega)}{\rightharpoonup} |u|^{\alpha}|v|^{\beta-2}v,
$$
Since $\phi,\psi\in X_0\subset L^{q}(\Omega)\cap L^{\alpha+\beta}(\Omega)$, it follows that, as $k\to\infty$,
$$
\int_{\Omega}\left( |u_k|^{q-2}u_k-|u|^{q-2}u\right)\phi dx\to0,\qquad
\int_{\Omega}\left( |v_k|^{q-2}v_k-|v|^{q-2}v\right)\psi dx\to0,
$$
and
$$
\int_\Omega\left(|u_k|^{\alpha-2}u_k|v_k|^\beta-|u|^{\alpha-2}u|v|^\beta\right)\phi dx\to0,
\qquad
\int_\Omega\left(|u_k|^{\alpha}|v_k|^{\beta-2}v_k-|u|^{\alpha}|v|^{\beta-2}v\right)\psi dx\to0.
$$
Hence $\langle J_{\lambda,\mu}'(u_k,v_k)-J_{\lambda,\mu}'(u,v),(\phi,\psi)\rangle\to0$ for all $(\phi,\psi)\in E$,
which yields  $J'_{\lambda,\mu}(u,v)=0$. In particular, we get $\langle J'_{\lambda,\mu}(u,v), (u,v)\rangle=0$, namely
$$
2\int_\Omega|u|^\alpha|v|^\beta dx=\|(u,v)\|^p-\int_\Omega(\lambda|u|^q+\mu|v|^q)dx.
$$
Then
\begin{align}\label{pseq2}
J_{\lambda,\mu}(u,v)&=\left(\frac{1}{p}-\frac{1}{p_s^\ast}\right)\|(u,v)\|^p-\left(\frac{1}{q}-\frac{1}{p_s^\ast}\right)\int_\Omega(\lambda|u|^q+\mu|v|^q)dx\nonumber\\
&=\frac{s}{n}\|(u,v)\|^p-\left(\frac{1}{q}-\frac{1}{p_s^\ast}\right)\int_\Omega(\lambda|u|^q+\mu|v|^q)dx.
\end{align}
By H\"{o}lder inequality, Sobolev embedding,  (\ref{criticalfrac}) and Young inequality, we have
\begin{align}\label{pseq3}
&\int_\Omega(\lambda|u|^q+\mu|v|^q)dx
\leq  |\Omega|^{\frac{p_s^\ast-q}{p_s^\ast}}S^{-\frac{q}{p}}\left(\lambda\|u\|_{X_0}^q+
\mu\|v\|^q_{X_0}\right)\nonumber\\
&=\left(\left[\frac{p}{q} \frac{s}{n}\left(\frac{1}{q}-\frac{1}{p_s^\ast}\right)^{-1} \right]^{\frac{q}{p}}\|u\|_{X_0}^q\right)\left(\left[\frac{p}{q} \frac{s}{n}\left(\frac{1}{q}-\frac{1}{p_s^\ast}\right)^{-1}\right]^{-\frac{q}{p}}
|\Omega|^{\frac{p_s^\ast-q}{p_s^\ast}}S^{-\frac{q}{p}}\lambda\right)\nonumber\\
&+\left(\left[\frac{p}{q} \frac{s}{n}\left(\frac{1}{q}-\frac{1}{p_s^\ast}\right)^{-1}\right]^{\frac{q}{p}}\|v\|_{X_0}^q\right)\left(\left[\frac{p}{q} \frac{s}{n}\left(\frac{1}{q}-\frac{1}{p_s^\ast}\right)^{-1}\right]^{-\frac{q}{p}}
|\Omega|^{\frac{p_s^\ast-q}{p_s^\ast}}S^{-\frac{q}{p}}\mu\right)\nonumber\\
&\leq \frac{s}{n}\left(\frac{1}{q}-\frac{1}{p_s^\ast}\right)^{-1} \left(\|u\|_{X_0}^p+\|v\|_{X_0}^p\right) +\widehat{C}\big(\lambda^{\frac{p}{p-q}}+\mu^{\frac{p}{p-q}}\big)\nonumber\\
&=\frac{s}{n}\left(\frac{1}{q}-\frac{1}{p_s^\ast}\right)^{-1}  \|(u,v)\|^p +\widehat{C}\big(\lambda^{\frac{p}{p-q}}+\mu^{\frac{p}{p-q}}\big),
\end{align}
with
\begin{equation*}
\widehat{C}= \frac{p-q}{p}\left(\left[\frac{p}{q} \frac{s}{n}\left(\frac{1}{q}-\frac{1}{p_s^\ast}\right)^{-1}\right]^{-\frac{q}{p}}
|\Omega|^{\frac{p_s^\ast-q}{p_s^\ast}}S^{-\frac{q}{p}} \right)^{\frac{p}{p-q}}  
 =  \frac{p-q}{p}\left(\frac{p_s^\ast-q}{p_s^\ast-p}\right)^{\frac{q}{p-q}}
|\Omega|^{\frac{p(p_s^\ast-q)}{ p_s^\ast (p-q)}}S^{-\frac{q}{p-q}}.
\end{equation*}
Then \eqref{pseq1} follows from \eqref{pseq2} and \eqref{pseq3} with $C_0=\big(\frac{1}{q}-\frac{1}{p_s^\ast}\big)\widehat{C}$.
\end{proof}

\noindent
Let us set
\begin{equation}\label{criticalfracertga}
S_{\alpha,\beta} :=\inf\limits_{(u,v)\in E\backslash\{0\}} \frac{\|(u,v)\|^p}
{\left(\displaystyle\int_{\Omega}|u|^{\alpha}|v|^\beta dx\right)^{\frac{p}{\alpha+\beta}}}.
\end{equation}
\noindent
We have the following result which provides a connection between $S_{\alpha,\beta}$ and $S$. The 
argument follows essentially the line of \cite{ahn} but, for the sake of self-containedness, we include it.

\begin{lemma}[$S_{\alpha,\beta}$ versus $S$]
There holds
\begin{equation}
\label{alphabeta}
S_{\alpha,\beta}=\Big[\Big(\frac{\alpha}{\beta}\Big)^{\frac{\beta}{\alpha+\beta}}+\Big(\frac{\beta}{\alpha}\Big)^{\frac{\alpha}{\alpha+\beta}}\big]S.
\end{equation}
\end{lemma}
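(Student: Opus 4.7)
The plan is to prove \eqref{alphabeta} by establishing the two inequalities $S_{\alpha,\beta}\leq C\,S$ and $S_{\alpha,\beta}\geq C\,S$ separately, where $C:=(\alpha/\beta)^{\beta/(\alpha+\beta)}+(\beta/\alpha)^{\alpha/(\alpha+\beta)}$. Before doing so, I would record the identity
\[
C=(\alpha+\beta)\,\alpha^{-\alpha/(\alpha+\beta)}\beta^{-\beta/(\alpha+\beta)},
\]
which follows by factoring $(\alpha+\beta)\alpha^{-\alpha/(\alpha+\beta)}\beta^{-\beta/(\alpha+\beta)}=\alpha^{\beta/(\alpha+\beta)}\beta^{-\beta/(\alpha+\beta)}+\alpha^{-\alpha/(\alpha+\beta)}\beta^{\alpha/(\alpha+\beta)}$. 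This alternative form is what both optimization computations below will spit out.

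For the upper bound, I would test the quotient defining $S_{\alpha,\beta}$ on pairs of the form $(u,v)=(sw,tw)$ with $w\in X_0\setminus\{0\}$ arbitrary and $s,t>0$ free. Since $\|(sw,tw)\|^p=(s^p+t^p)\|w\|_{X_0}^p$ and $\int_\Omega|sw|^\alpha|tw|^\beta\,dx=s^\alpha t^\beta\int_\Omega |w|^{\alpha+\beta}\,dx$, the quotient factors as
\[
\frac{(s^p+t^p)\,\|w\|_{X_0}^p}{s^{\alpha p/(\alpha+\beta)}t^{\beta p/(\alpha+\beta)}\bigl(\int_\Omega|w|^{\alpha+\beta}\bigr)^{p/(\alpha+\beta)}}.
\]
Setting $\tau=t/s$, a one-variable minimization of $f(\tau)=(1+\tau^p)/\tau^{\beta p/(\alpha+\beta)}$ gives the critical point $\tau_*=(\beta/\alpha)^{1/p}$ and the minimal value $(\alpha+\beta)\alpha^{-\alpha/(\alpha+\beta)}\beta^{-\beta/(\alpha+\beta)}=C$. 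Taking the infimum over $w\in X_0\setminus\{0\}$ and invoking $\alpha+\beta=p_s^\ast$ together with \eqref{criticalfrac} yields $S_{\alpha,\beta}\leq C\,S$.

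For the reverse inequality, fix $(u,v)\in E\setminus\{(0,0)\}$; we may assume $u,v\not\equiv 0$ (otherwise $\int|u|^\alpha|v|^\beta=0$ and the ratio is infinite). Set $A=\int_\Omega|u|^{p_s^\ast}\,dx$ and $B=\int_\Omega|v|^{p_s^\ast}\,dx$. The Sobolev inequality \eqref{criticalfrac} applied component-wise gives $\|u\|_{X_0}^p\geq S\,A^{p/p_s^\ast}$ and $\|v\|_{X_0}^p\geq S\,B^{p/p_s^\ast}$, while H\"older with exponents $p_s^\ast/\alpha$ and $p_s^\ast/\beta$ produces $\int_\Omega|u|^\alpha|v|^\beta\,dx\leq A^{\alpha/p_s^\ast}B^{\beta/p_s^\ast}$. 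Combining these,
\[
\frac{\|(u,v)\|^p}{\bigl(\int_\Omega|u|^\alpha|v|^\beta\,dx\bigr)^{p/(\alpha+\beta)}}\geq S\cdot\frac{A^{p/p_s^\ast}+B^{p/p_s^\ast}}{A^{\alpha p/(p_s^\ast)^2}B^{\beta p/(p_s^\ast)^2}},
\]
and the remaining quotient is $0$-homogeneous in $(A,B)$. Normalizing $B=1$ and minimizing over $A>0$ yields the minimizer $A=\alpha/\beta$ and the value $C$, completing the proof. The main obstacle is simply executing the two elementary optimizations cleanly and verifying they produce the same constant; there is no serious analytic difficulty, precisely because we have replaced use of an unknown extremal of $S$ by a test-function argument.
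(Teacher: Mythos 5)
Your argument is correct, and the two halves compare differently with the paper's own proof. The upper bound is essentially the paper's: both test the quotient on pairs $(sw,tw)$ and minimize the resulting one-variable function of $t/s$, whose minimum value is $C=(\alpha/\beta)^{\beta/(\alpha+\beta)}+(\beta/\alpha)^{\alpha/(\alpha+\beta)}$ (the paper phrases this with a minimizing sequence for $S$ rather than an infimum over all $w$, which amounts to the same thing). For the lower bound your route is genuinely different. The paper takes a minimizing sequence $(u_n,v_n)$ for $S_{\alpha,\beta}$, rescales the second component as $z_n=s_nv_n$ so that $\int_\Omega|u_n|^{p_s^\ast}dx=\int_\Omega|z_n|^{p_s^\ast}dx$, applies Young's inequality to $\int_\Omega|u_n|^{\alpha}|z_n|^{\beta}dx$, and lands on the same one-variable function evaluated at the particular normalization constant $s_n$. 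You instead prove a direct inequality valid for every admissible pair: the scalar Sobolev inequality applied to each component, H\"older with exponents $p_s^\ast/\alpha$ and $p_s^\ast/\beta$ applied to the coupling term, and then an elementary minimization of a $0$-homogeneous expression in $(A,B)$. Your version avoids both the minimizing sequence and the auxiliary rescaling, and it makes transparent that the two optimizations produce the same constant, since both reduce to the same elementary function. One small slip: with $B=1$ the critical point of $A\mapsto (A^{p/p_s^\ast}+1)A^{-\alpha p/(p_s^\ast)^2}$ occurs at $A^{p/p_s^\ast}=\alpha/\beta$, i.e.\ at $A=(\alpha/\beta)^{p_s^\ast/p}$ rather than $A=\alpha/\beta$; the minimal value is nevertheless $C$ as you claim, so nothing downstream is affected.
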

\begin{proof}
		Let $\{\omega_n\}_{n\in {\mathbb N}}\subset X_0$  be a
		minimization sequence for $S.$ Let $s,t>0$ be chosen later and consider
		the sequences $u_{n}:=s  \omega_n$ and $v_{n}:=t \omega_n$ in $X_0$. By the definition of $S_{\alpha,\beta}$, we have
		\begin{equation}\label{for1}
		\frac{s^p+t^p}{(s^{\alpha}t^{\beta})^{\frac{p}{p^*_s}}}\frac{\displaystyle\int_{\mathbb{R}^{2n}}\frac{|\omega_n(x)-\omega_n(y)|^p}{|x-y|^{n+ps}}dxdy}
		{\left(\displaystyle\int_{\Omega}|\omega_n|^{p^*_s}dx\right)^{\frac{p}{p^*_s}}}\geq
		S_{\alpha,\beta}.
		\end{equation}
		Observe that
		$$
		\frac{s^p+t^p}{(s^{\alpha}t^{\beta})^{\frac{p}{p^*_s}}}=\left(\frac{s}{t}\right)^{\frac{p\beta}{p_s^\ast}}
		+\left(\frac{s}{t}\right)^{-\frac{p\alpha}{p_s^\ast}}.
		$$
		Let us consider the function
		$g:\mathbb{R}^+\to\mathbb{R}^+$ by setting $g(x):=x^{\frac{p\beta}{p^*_s}}+x^{\frac{-p\alpha}{p^*_s}}$, we have
		$$
		\frac{s^p+t^p}{(s^{\alpha}t^{\beta})^{\frac{p}{p^*_s}}}=g\Big(\frac{s}{t}\Big),
		$$
		and   the function $g$ achieves its minimum at  point $x_0=\big(\frac{\alpha}{\beta}\big)^{\frac{1}{p}}$ with minimum value 
		$$
		\min_{x\in\mathbb{R}^+}g(x)=
		\left(\frac{\alpha}{\beta}\right)^{\frac{\beta}{p^*_s}}+\left(\frac{\beta}{\alpha}\right)^{\frac{\alpha}{p^*_s}}.
		$$
		Choosing $s,t$ in \eqref{for1} such that $\frac{s}{t}=\big(\frac{\alpha}{\beta}\big)^{\frac{1}{p}}$ and letting $n\to\infty$ yields
		\begin{equation}\label{for2}
		\Big[\left(\frac{\alpha}{\beta}\right)^{\frac{\beta}{p^*_s}}+
		\left(\frac{\beta}{\alpha}\right)^{\frac{\alpha}{p^*_s}}\Big]S\geq
		S_{\alpha,\beta}.
		\end{equation}
		On the other hand, let $\{(u_{n},v_{n})\}_{n\in {\mathbb N}}\subset E\setminus\{(0,0)\}$ be
		a minimizing sequence for $S_{\alpha,\beta}.$ Set
		$z_n:=s_nv_{n}$ for $s_n>0$ with $\int_{\Omega}|u_{n}|^{p^*_s}dx=\int_{\Omega}|z_n|^{p^*_s}dx.$
		Then Young inequality implies
		\begin{equation*}
		\int_{\Omega}|u_{n}|^{\alpha}|z_n|^{\beta}dx\leq \frac{\alpha}{\alpha+\beta}
		\int_{\Omega}|u_{n}|^{\alpha+\beta}dx+\frac{\beta}{\alpha+\beta}
		\int_{\Omega}|z_n|^{\alpha+\beta}dx= \int_{\Omega}|z_n|^{\alpha+\beta}dx=\int_{\Omega}|u_{n}|^{\alpha+\beta}dx.
		\end{equation*}
		Then we have
		\begin{align*}
			&\frac{\displaystyle\int_{\mathbb{R}^{2n}}\frac{|u_n(x)-u_n(y)|^p}{|x-y|^{n+ps}}dxdy+\int_{\mathbb{R}^{2n}}\frac{|v_n(x)-v_n(y)|^p}{|x-y|^{n+ps}}dxdy}
			{\Big(\displaystyle\int_{\Omega}|u_{n}|^{\alpha}|v_{n}|^{\beta}dx\Big)^{\frac{p}{\alpha+\beta}}}\\
			&=\frac{s_n^{\frac{p\beta}{\alpha+\beta}}\left(\displaystyle\int_{\mathbb{R}^{2n}}\frac{|u_n(x)-u_n(y)|^p}{|x-y|^{n+ps}}dxdy+\int_{\mathbb{R}^{2n}}\frac{|v_n(x)-v_n(y)|^p}{|x-y|^{n+ps}}dxdy\right)}
			{\Big(\displaystyle\int_{\Omega}|u_{n}|^{\alpha}|z_{n}|^{\beta}dx\Big)^{\frac{p}{\alpha+\beta}}}\\
			&\geq s_n^{\frac{p\beta}{\alpha+\beta}}\frac{\displaystyle\int_{\mathbb{R}^{2n}}\frac{|u_n(x)-u_n(y)|^p}{|x-y|^{n+ps}}dxdy}
			{\Big(\displaystyle\int_{\Omega}|u_{n}|^{\alpha+\beta}dx\Big)^{\frac{p}{\alpha+\beta}}}
			+s_n^{\frac{p\beta}{\alpha+\beta}}s_n^{-p}\frac{\displaystyle\int_{\mathbb{R}^{2n}}\frac{|z_n(x)-z_n(y)|^p}{|x-y|^{n+ps}}dxdy}
			{\Big(\displaystyle\int_{\Omega}|z_{n}|^{\alpha+\beta}dx\Big)^{\frac{p}{\alpha+\beta}}}\\
			&\geq g(s_n)S\geq \Big[\left(\frac{\alpha}{\beta}\right)^{\frac{\beta}{p^*_s}}+\left(\frac{\beta}{\alpha}\right)^{\frac{\alpha}{p^*_s}}\Big]S.
		\end{align*}
		Passing to the limit as $n\to\infty$ in the last inequality we obtain
		\begin{equation}\label{for3}
		\Big[\left(\frac{\alpha}{\beta}\right)^{\frac{\beta}{p^*_s}}+
		\left(\frac{\beta}{\alpha}\right)^{\frac{\alpha}{p^*_s}}\Big]S\leq
		S_{\alpha,\beta}.
		\end{equation}
		Thus \eqref{alphabeta} follows from \eqref{for2} and \eqref{for3}.
\end{proof}

\begin{lemma}[Palais-Smale range]
	\label{pscond}
$J_{\lambda,\mu}$ satisfies the $(PS)_c$ condition with $c$ satisfying
\begin{eqnarray}\label{pscv}
-\infty<c<c_\infty= \frac{2s}{n}\left(\frac{S_{\alpha,\beta}}{2}\right)^{\frac{n}{ps}}
-C_0\left(\lambda^{\frac{p}{p-q}}+\mu^{\frac{p}{p-q}}\right),
\end{eqnarray}
where $C_0$ is the positive constant defined in \eqref{cizero}.
\end{lemma}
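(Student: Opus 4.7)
The plan is to follow the standard concentration-compactness strategy adapted to systems with the coupling $|u|^\alpha|v|^\beta$, isolating the possible defect of compactness at the critical exponent by means of a Brézis–Lieb-type splitting and then ruling it out with the upper bound on $c$ assumed in \eqref{pscv}.

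Given a $(PS)_c$ sequence $\{(u_k,v_k)\}_{k\in\mathbb N}$, Lemma \ref{ps3} yields boundedness in $E$, so up to a subsequence $(u_k,v_k)\rightharpoonup (u,v)$ in $E$, and by Lemma \ref{ps2} the weak limit satisfies $J'_{\lambda,\mu}(u,v)=0$ together with the lower bound
$J_{\lambda,\mu}(u,v)\geq -C_0(\lambda^{p/(p-q)}+\mu^{p/(p-q)})$. Set $\tilde u_k:=u_k-u$ and $\tilde v_k:=v_k-v$. By the Brézis–Lieb lemma applied to the Gagliardo seminorm (standard in the fractional $p$-Laplacian setting) and to $|u|^\alpha|v|^\beta$ (after noting a.e.\ convergence and uniform $L^{p^*_s}$ bounds, with Hölder's inequality to split the product), I would record
\begin{align*}
\|(u_k,v_k)\|^p &= \|(u,v)\|^p + \|(\tilde u_k,\tilde v_k)\|^p + o(1), \\
\int_\Omega |u_k|^\alpha|v_k|^\beta\,dx &= \int_\Omega |u|^\alpha|v|^\beta\,dx + \int_\Omega |\tilde u_k|^\alpha|\tilde v_k|^\beta\,dx + o(1),
\end{align*}
while the subcritical $L^q$ terms pass to the limit by the compact embedding $X_0\hookrightarrow L^q(\Omega)$.

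Writing $\langle J'_{\lambda,\mu}(u_k,v_k),(u_k,v_k)\rangle=o(1)$ and subtracting $\langle J'_{\lambda,\mu}(u,v),(u,v)\rangle=0$, these splittings yield
\begin{equation*}
\|(\tilde u_k,\tilde v_k)\|^p = 2\int_\Omega |\tilde u_k|^\alpha|\tilde v_k|^\beta\,dx + o(1).
\end{equation*}
Set, along a further subsequence, $b:=\lim_k \|(\tilde u_k,\tilde v_k)\|^p$ and $d:=\lim_k \int_\Omega |\tilde u_k|^\alpha|\tilde v_k|^\beta\,dx$, so that $b=2d$. The definition \eqref{criticalfracertga} of $S_{\alpha,\beta}$ gives $S_{\alpha,\beta}\,d^{p/(\alpha+\beta)}\leq b=2d$, i.e.\ $d^{(p^*_s-p)/p^*_s}\geq S_{\alpha,\beta}/2$, so either $b=0$ or, using $p^*_s/(p^*_s-p)=n/(ps)$,
\begin{equation*}
b \;\geq\; 2\bigl(S_{\alpha,\beta}/2\bigr)^{n/(ps)}.
\end{equation*}

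To conclude, insert the splittings into $J_{\lambda,\mu}(u_k,v_k)=c+o(1)$: the quadratic/critical combination collapses via $1/p-1/p^*_s=s/n$ and $b=2d$ to
\begin{equation*}
c \;=\; J_{\lambda,\mu}(u,v) + \tfrac{s}{n}\,b + o(1).
\end{equation*}
If $b>0$, combining the two lower bounds gives $c\geq -C_0(\lambda^{p/(p-q)}+\mu^{p/(p-q)}) + \tfrac{2s}{n}(S_{\alpha,\beta}/2)^{n/(ps)}=c_\infty$, contradicting \eqref{pscv}. Hence $b=0$, so $\|(\tilde u_k,\tilde v_k)\|\to 0$ and $(u_k,v_k)\to(u,v)$ strongly in $E$.

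The main obstacle I anticipate is the Brézis–Lieb splitting for the nonlocal Gagliardo seminorm: one must verify a.e.\ convergence of the difference quotients $(u_k(x)-u_k(y))/|x-y|^{(n+ps)/p}$ (up to a subsequence obtained via Fatou-type arguments) so that the classical Brézis–Lieb lemma applies in $L^p(\mathbb R^{2n})$; the analogous splitting of the mixed term $|u|^\alpha|v|^\beta$ requires a careful Hölder decomposition but is by now routine. Everything else is algebraic bookkeeping with $p^*_s$.
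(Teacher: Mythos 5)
Your proposal is correct and follows essentially the same route as the paper's proof: boundedness via Lemma \ref{ps3}, identification of the weak limit as a critical point with the lower bound of Lemma \ref{ps2}, the Br\'ezis--Lieb splittings of the Gagliardo seminorm and of the coupled term, the identity $b=2d$ from testing with $(u_k,v_k)$, the lower bound $b\geq 2(S_{\alpha,\beta}/2)^{n/(ps)}$ via \eqref{criticalfracertga}, and the energy identity $c=J_{\lambda,\mu}(u,v)+\tfrac{s}{n}b$ forcing $b=0$ under \eqref{pscv}. The nonlocal Br\'ezis--Lieb step you flag as the main obstacle is exactly what the paper delegates to the cited reference for \eqref{converafg}.
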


\begin{proof}
Let $\{(u_k,v_k)\}_{k\in {\mathbb N}}$ be a $(PS)_c$ sequence of $J_{\lambda,\mu}$ in $E$. Then
\begin{align}\label{pscr4r}
\frac{1}{p}\|(u_k,v_k)\|^p-\frac{1}{q}\int_\Omega(\lambda|u_k|^q+\mu|v_k|^q)dx-\frac{2}{p_s^\ast}\int_\Omega|u_k|^\alpha|v_k|^\beta dx&= c+o_k(1), \\
\label{pscr4ar}
\|(u_k,v_k)\|^p- \int_\Omega(\lambda|u_k|^q+\mu|v_k|^q)dx-2\int_\Omega|u_k|^\alpha|v_k|^\beta dx&= o_k(1).
\end{align}
We know by Lemma \ref{ps3}  that $\{(u_k,v_k)\}_{k\in {\mathbb N}}$ is bounded in $E$. 
Then, up to a subsequence, $(u_k,v_k)\rightharpoonup (u,v)$ in $E$, and by Lemma \ref{ps2} we learn that $(u,v)$ is a critical point of $J_{\lambda,\mu}$. 
Next we show that $(u_k,v_k)$ converges strongly to $(u,v)$ as $k\to\infty$ in $E$. Since $u_k\to u$ and $v_k\to v$ in $L^r(\mathbb{R}^n)$, we obtain 
$$
\int_\Omega(\lambda|u_k|^{q}+\mu|v_k|^q)dx\to \int_\Omega(\lambda|u|^{q}+\mu|v|^q)dx,\quad \mbox{as}\ \ k\to\infty.
$$
Moreover, by variants of the Brezis-Lieb Lemma, we can easily get
\begin{eqnarray}\label{converafg}
\|(u_k,v_k)\|^p=\|(u_k-u,v_k-v)\|^p+\|(u,v)\|^p+o_k(1),
\end{eqnarray}
(cf.\ \cite[Lemma 2.2]{bsy}), and
\begin{eqnarray}\label{converafga}
\int_{\Omega}|u_k|^\alpha|v_k|^\beta dx=\int_{\Omega}|u_k-u|^\alpha|v_k-v|^\beta dx+\int_{\Omega}|u|^\alpha|v|^\beta dx+o_k(1).
\end{eqnarray}
Taking (\ref{converafg}) and (\ref{converafga}) into (\ref{pscr4r}) and (\ref{pscr4ar}), we find
\begin{eqnarray}\label{converafgb}
\frac{1}{p}\|(u_k-u,v_k-v)\|^p-\frac{2}{p_s^\ast}\int_{\Omega}|u_k-u|^\alpha|v_k-v|^\beta dx=c-J_{\lambda,\mu}(u,v)+o_k(1),
\end{eqnarray}
and
\begin{equation*}
\|(u_k-u,v_k-v)\|^p=2\int_{\Omega}|u_k-u|^\alpha|v_k-v|^\beta dx+o_k(1).
\end{equation*}
Hence, we may assume that
\begin{eqnarray}\label{converafgablg}
\|(u_k-u,v_k-v)\|^p\to m,\quad 2\int_{\Omega}|u_k-u|^\alpha|v_k-v|^\beta dx\to m\quad \mbox{as}\ k\to\infty.
\end{eqnarray}
If $m=0$, we are done. Suppose $m>0$. Then from (\ref{converafgablg}) and the definition of $S_{\alpha,\beta}$ in (\ref{criticalfracertga}), we have
\begin{eqnarray*}
S_{\alpha,\beta}\left(\frac{m}{2}\right)^{\frac{p}{p_s^\ast}}= S_{\alpha,\beta}\lim\limits_{k\to\infty}\Big(\int_{\Omega}|u_k-u|^\alpha|v_k-v|^\beta dx\Big)^{\frac{p}{p_s^\ast}}\leq \lim\limits_{k\to\infty}
\|(u_k-u,v_k-v)\|^{p}=m,
\end{eqnarray*}
this yields that $m\geq 2 \big(\frac{S_{\alpha,\beta}}{2}\big)^{\frac{n}{ps}}$.
From \eqref{converafgb}, we obtain
\begin{equation*}
c= \frac{s}{n} m+J_{\lambda,\mu}(u,v).
\end{equation*}
By Lemma \ref{ps2} and $m\geq 2 \big(\frac{S_{\alpha,\beta}}{2}\big)^{\frac{n}{ps}}$, we find
\begin{equation*}
c \geq \frac{2s}{n}\Big(\frac{S_{\alpha,\beta}}{2}\Big)^{\frac{n}{ps}}-C_0\big(\lambda^{\frac{p}{p-q}}+\mu^{\frac{p}{p-q}}\big),
\end{equation*}
which is impossible for $-\infty<c< \frac{2s}{n}\big(\frac{S_{\alpha,\beta}}{2}\big)^{\frac{n}{ps}}
-C_0\big(\lambda^{\frac{p}{p-q}}+\mu^{\frac{p}{p-q}}\big)$.
\end{proof}

\smallskip

\section{Existence of solutions}
\label{Exx}

\noindent
Next we start with some Lemmas.

\begin{lemma}[$\mathcal{N}_{\lambda,\mu}^0$ is empty]
	\label{le1}
Let $0<\lambda^{\frac{p}{p-q}}+\mu^{\frac{p}{p-q}}<\Lambda_1$, where $\Lambda_1$ is as in \eqref{lambda1}. Then  $\mathcal{N}_{\lambda,\mu}^0=\emptyset$.
\end{lemma}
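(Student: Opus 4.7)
The plan is to argue by contradiction: assume there exists $(u,v) \in \mathcal{N}_{\lambda,\mu}^0$ and derive a lower bound on $\lambda^{p/(p-q)} + \mu^{p/(p-q)}$ forcing it to be at least $\Lambda_1$, thereby contradicting the smallness assumption.

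First I would exploit the two distinct expressions for $\varphi''_{u,v}(1)$ recorded in \eqref{ma2}. Since $(u,v)\in\mathcal N_{\lambda,\mu}^0$ means $\varphi''_{u,v}(1)=0$, the two identities give
\begin{equation*}
\|(u,v)\|^p = \frac{2(\alpha+\beta-q)}{p-q}\int_\Omega |u|^\alpha |v|^\beta\,dx,\qquad
\|(u,v)\|^p = \frac{\alpha+\beta-q}{\alpha+\beta-p}\int_\Omega (\lambda |u|^q + \mu |v|^q)\,dx.
\end{equation*}

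Next I would bound the right-hand sides as already done inside the proof of Lemma \ref{firb2}. The Young inequality together with the Sobolev embedding through the constant $S$ yields
\begin{equation*}
2\int_\Omega |u|^\alpha |v|^\beta\,dx \le 2 S^{-\frac{\alpha+\beta}{p}}\|(u,v)\|^{\alpha+\beta},
\end{equation*}
so the first identity produces the \emph{lower bound}
\begin{equation*}
\|(u,v)\|^{\alpha+\beta-p}\ge \frac{p-q}{2(\alpha+\beta-q)}\,S^{\frac{\alpha+\beta}{p}}.
\end{equation*}
On the other hand, combining H\"older's inequality on $\Omega$ with the Sobolev embedding $X_0\hookrightarrow L^{p_s^*}(\Omega)$ gives
\begin{equation*}
\int_\Omega (\lambda|u|^q+\mu|v|^q)\,dx \le S^{-\frac{q}{p}}|\Omega|^{\frac{\alpha+\beta-q}{\alpha+\beta}}
\bigl(\lambda^{\frac{p}{p-q}}+\mu^{\frac{p}{p-q}}\bigr)^{\frac{p-q}{p}}\|(u,v)\|^q,
\end{equation*}
so the second identity yields the \emph{upper bound}
\begin{equation*}
\|(u,v)\|^{p-q}\le \frac{\alpha+\beta-q}{\alpha+\beta-p}\,S^{-\frac{q}{p}}|\Omega|^{\frac{\alpha+\beta-q}{\alpha+\beta}}
\bigl(\lambda^{\frac{p}{p-q}}+\mu^{\frac{p}{p-q}}\bigr)^{\frac{p-q}{p}}.
\end{equation*}

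Finally I would reconcile the two bounds. Raising the lower bound to the power $\tfrac{1}{\alpha+\beta-p}$ and the upper bound to $\tfrac{1}{p-q}$ produces two estimates for $\|(u,v)\|$ whose compatibility forces
\begin{equation*}
\lambda^{\frac{p}{p-q}}+\mu^{\frac{p}{p-q}}\ge
\left(\frac{p-q}{2(\alpha+\beta-q)}\right)^{\!\frac{p}{\alpha+\beta-p}}\!
\left(\frac{\alpha+\beta-q}{\alpha+\beta-p}\,|\Omega|^{\frac{\alpha+\beta-q}{\alpha+\beta}}\right)^{\!-\frac{p}{p-q}}
S^{\frac{\alpha+\beta}{\alpha+\beta-p}+\frac{q}{p-q}} = \Lambda_1,
\end{equation*}
which contradicts the hypothesis $0<\lambda^{\frac{p}{p-q}}+\mu^{\frac{p}{p-q}}<\Lambda_1$. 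The only non-routine step is bookkeeping the exponents of $S$, $|\Omega|$ and the algebraic factors in the final combination so that they assemble into exactly the constant $\Lambda_1$ defined in \eqref{lambda1}; everything else is a direct reading of identities already established in Section \ref{premed}.
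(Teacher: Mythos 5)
Your proposal is correct: the two identities you extract from \eqref{ma2} under $\varphi''_{u,v}(1)=0$, the Young/Sobolev bound $\int_\Omega|u|^\alpha|v|^\beta\,dx\le S^{-\frac{\alpha+\beta}{p}}\|(u,v)\|^{\alpha+\beta}$, and the H\"older/Sobolev bound on $\int_\Omega(\lambda|u|^q+\mu|v|^q)\,dx$ combine exactly as you indicate, and the exponent bookkeeping does reproduce the constant $\Lambda_1$ of \eqref{lambda1} (I checked: the lower bound gives $\|(u,v)\|\ge\big(\tfrac{p-q}{2(\alpha+\beta-q)}S^{\frac{\alpha+\beta}{p}}\big)^{\frac{1}{\alpha+\beta-p}}$, the upper bound gives $\|(u,v)\|\le\big(\tfrac{\alpha+\beta-q}{\alpha+\beta-p}S^{-\frac qp}|\Omega|^{\frac{\alpha+\beta-q}{\alpha+\beta}}\big)^{\frac{1}{p-q}}\big(\lambda^{\frac{p}{p-q}}+\mu^{\frac{p}{p-q}}\big)^{\frac1p}$, and eliminating $\|(u,v)\|$ yields precisely $\lambda^{\frac{p}{p-q}}+\mu^{\frac{p}{p-q}}\ge\Lambda_1$). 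However, your route is genuinely different from the paper's. The paper gives a soft, two-line argument: by Lemma \ref{firb2}, under the hypothesis on $\lambda,\mu$ the fibering map $\varphi_{u,v}$ has \emph{exactly} two critical points $t_1<t_2$ with $\varphi''_{u,v}(t_1)>0>\varphi''_{u,v}(t_2)$; if $(u,v)\in\mathcal N^0_{\lambda,\mu}$ then $t=1$ is a critical point with vanishing second derivative, so $1\in\{t_1,t_2\}$ forces $\varphi''_{u,v}(1)\ne 0$, a contradiction. Your quantitative argument is self-contained modulo the two embedding estimates and does not need the full strength of Lemma \ref{firb2}; it is in fact the same computation the authors themselves deploy later, in the proof of Proposition \ref{Pss}(i) (estimates \eqref{frac5a}--\eqref{frac5b}), to rule out the degenerate case \eqref{r7a} along a sequence. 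The paper's version buys brevity by reusing the fibering analysis already done; yours buys independence from that lemma and makes explicit where the threshold $\Lambda_1$ comes from. Both are valid proofs of the statement.
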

\begin{proof}
We learn from the proof of Lemma~\ref{firb2} that there exist {\em exactly} two numbers
$t_2>t_1>0$ such that $\varphi_{u,v}'(t_1)=\varphi_{u,v}'(t_2)=0$. Furthermore, $\varphi_{u,v}''(t_1)>0>\varphi_{u,v}''(t_2)$.
If by contradiction $(u,v)\in\mathcal{N}_{\lambda,\mu}^0$ we have $\varphi'_{u,v}(1)=0$ with $\varphi_{u,v}''(1)=0$. Then, either $t_1=1$ or $t_2=1$. 
In turn, either $\varphi_{u,v}''(1)>0$ or $\varphi_{u,v}''(1)<0$, a contradiction.
\end{proof}


\begin{lemma}[Coercivity]
	\label{coercive}
$J_{\lambda,\mu}$ is coercive and bounded from below on $\mathcal{N}_{\lambda,\mu}$ for all $\lambda>0$ and $\mu>0$.
\end{lemma}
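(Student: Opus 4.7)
The plan is to eliminate the critical coupling term from the functional using the Nehari constraint, leaving only a comparison between $\|(u,v)\|^p$ and the subcritical $L^q$ term, which is easily handled since $q<p$.

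First I would use the identity defining $\mathcal{N}_{\lambda,\mu}$, namely
\[
2\int_\Omega|u|^{\alpha}|v|^{\beta}\,dx=\|(u,v)\|^p-\int_\Omega(\lambda|u|^q+\mu|v|^q)\,dx,
\]
to substitute into the expression of $J_{\lambda,\mu}(u,v)$. A direct simplification gives, for every $(u,v)\in\mathcal{N}_{\lambda,\mu}$,
\[
J_{\lambda,\mu}(u,v)=\Big(\frac{1}{p}-\frac{1}{\alpha+\beta}\Big)\|(u,v)\|^p-\Big(\frac{1}{q}-\frac{1}{\alpha+\beta}\Big)\int_\Omega(\lambda|u|^q+\mu|v|^q)\,dx,
\]
where both prefactors are strictly positive because $q<p<\alpha+\beta=p_s^\ast$.

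Next I would bound the subcritical integral from above. By H\"older's inequality with exponents $p_s^\ast/q$ and $p_s^\ast/(p_s^\ast-q)$, followed by the Sobolev embedding \eqref{criticalfrac}, one gets
\[
\int_\Omega(\lambda|u|^q+\mu|v|^q)\,dx\leq |\Omega|^{\frac{p_s^\ast-q}{p_s^\ast}}S^{-\frac{q}{p}}\big(\lambda\|u\|_{X_0}^q+\mu\|v\|_{X_0}^q\big).
\]
Then a second H\"older inequality on the two-term sum, with conjugate exponents $p/(p-q)$ and $p/q$, yields
\[
\lambda\|u\|_{X_0}^q+\mu\|v\|_{X_0}^q\leq\big(\lambda^{\frac{p}{p-q}}+\mu^{\frac{p}{p-q}}\big)^{\frac{p-q}{p}}\|(u,v)\|^q.
\]
Combining these inequalities produces a constant $C=C(p,q,s,n,|\Omega|)>0$ with
\[
J_{\lambda,\mu}(u,v)\geq\Big(\frac{1}{p}-\frac{1}{\alpha+\beta}\Big)\|(u,v)\|^p-C\big(\lambda^{\frac{p}{p-q}}+\mu^{\frac{p}{p-q}}\big)^{\frac{p-q}{p}}\|(u,v)\|^q,
\]
for every $(u,v)\in\mathcal{N}_{\lambda,\mu}$.

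Since $q<p$, the right-hand side is a function of the form $A r^p-B r^q$ with $A,B>0$ and $r=\|(u,v)\|$; it tends to $+\infty$ as $r\to+\infty$ (coercivity) and it attains a finite global minimum on $[0,\infty)$, giving the uniform lower bound. There is no real obstacle beyond keeping track of the constants; the only subtle point is the second H\"older step used to decouple the parameters $\lambda,\mu$ from the norms of $u$ and $v$, which is exactly what produces the sharp $\lambda^{p/(p-q)}+\mu^{p/(p-q)}$ scaling consistent with the other lemmas.
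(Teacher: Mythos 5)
Your proof is correct and follows essentially the same route as the paper: rewrite $J_{\lambda,\mu}$ on $\mathcal{N}_{\lambda,\mu}$ by eliminating the critical term via the Nehari identity, then bound the $L^q$ term by $C\big(\lambda^{\frac{p}{p-q}}+\mu^{\frac{p}{p-q}}\big)^{\frac{p-q}{p}}\|(u,v)\|^q$ using H\"older and the Sobolev constant $S$, and conclude from $q<p$. The constants and the parameter scaling you obtain match the paper's exactly.
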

\begin{proof}
Let $\lambda>0$ and $\mu>0$ and pick $(u,v)\in {\mathcal N}_{\lambda,\mu}$. Then, there holds
\begin{align*}
J_{\lambda,\mu}(u,v) &=\Big(\frac{1}{p}-\frac{1}{\alpha+\beta}\Big)\|(u,v)\|^p-\Big(\frac{1}{q}-\frac{1}{\alpha+\beta}\Big)
\int_{\Omega} (\lambda |u|^q+\mu |v|^q) dx \\
&\geq 
\Big(\frac{1}{p}-\frac{1}{\alpha+\beta}\Big)\|(u,v)\|^p-\Big(\frac{1}{q}-\frac{1}{\alpha+\beta}\Big)
S^{-\frac{q}{p}}|\Omega|^{\frac{\alpha+\beta-q}{\alpha+\beta}}\left(\lambda^{\frac{p}{p-q}}+\mu^{\frac{p}{p-q}}\right)^{\frac{p-q}{p}}\|(u,v)\|^q,
\end{align*}	
which yields the assertion.	
\end{proof}

\noindent
By Lemmas \ref{le1} and \ref{coercive}, for any $0<\lambda^{\frac{p}{p-q}}+\mu^{\frac{p}{p-q}}<\Lambda_1$, 
$$
\mathcal{N}_{\lambda,\mu}=\mathcal{N}_{\lambda,\mu}^+ \cup \mathcal{N}_{\lambda,\mu}^-
$$ 
and $J_{\lambda,\mu}$ is coercive and bounded from below on $\mathcal{N}_{\lambda,\mu}^+$ and $\mathcal{N}_{\lambda,\mu}^-$. Therefore we may define
$$
c_{\lambda,\mu} :=\inf\limits_{\mathcal{N}_{\lambda,\mu}}J_{\lambda,\mu},\qquad\,\,
c_{\lambda,\mu}^\pm :=\inf\limits_{\mathcal{N}_{\lambda,\mu}^\pm}J_{\lambda,\mu}.
$$
\noindent
Of course, by Lemma~\ref{coercive}, we have $c_{\lambda,\mu},c_{\lambda,\mu}^\pm>-\infty$. The following result is valid 

\begin{lemma}[$c_{\lambda,\mu}^+<0$ and $ c_{\lambda,\mu}^->0$]
	\label{lema}
	 Let $\Lambda_1$ be as in \eqref{lambda1}. Then the following facts hold
	\begin{itemize}
\item[$(i)$] If $0<\lambda^{\frac{p}{p-q}}+\mu^{\frac{p}{p-q}}<\Lambda_1,$ then $c_{\lambda,\mu}\leq c_{\lambda,\mu}^+<0$,
\item[$(ii)$] If $0<\lambda^{\frac{p}{p-q}}+\mu^{\frac{p}{p-q}}<(q/p)^{\frac{p}{p-q}}\Lambda_1,$ then $ c_{\lambda,\mu}^->d_0$ for some
$d_0=d_0(\lambda,\mu,p,q,n,s,|\Omega|)>0$.
\end{itemize}
\end{lemma}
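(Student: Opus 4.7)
\smallskip
\noindent
\textbf{Proof plan.} The plan is to exploit the splitting identities for $\varphi''_{u,v}(1)$ in \eqref{ma2} together with the Nehari constraint, so that $J_{\lambda,\mu}$ restricted to $\mathcal{N}^\pm_{\lambda,\mu}$ becomes a single-variable quantity in $\|(u,v)\|$ controlled by the sign of $\varphi''_{u,v}(1)$.

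For part $(i)$, I would pick any $(u,v)\in\mathcal{N}_{\lambda,\mu}^+$ (which is nonempty by Lemma~\ref{firb2}) and use the second equality in \eqref{ma2}, namely $(p-q)\|(u,v)\|^p=2(\alpha+\beta-q)\int_\Omega |u|^\alpha|v|^\beta\,dx+\varphi''_{u,v}(1)$, to deduce the strict inequality
$$\int_\Omega|u|^\alpha|v|^\beta\,dx<\frac{p-q}{2(\alpha+\beta-q)}\|(u,v)\|^p.$$
Then I would substitute this into the rewriting
$$J_{\lambda,\mu}(u,v)=\Big(\tfrac{1}{p}-\tfrac{1}{q}\Big)\|(u,v)\|^p+2\Big(\tfrac{1}{q}-\tfrac{1}{\alpha+\beta}\Big)\int_\Omega|u|^\alpha|v|^\beta\,dx,$$
obtained by using the Nehari constraint to eliminate the $\lambda,\mu$ integral, and check that the resulting coefficient simplifies to $-\frac{(p-q)(\alpha+\beta-p)}{pq(\alpha+\beta)}<0$. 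This shows $J_{\lambda,\mu}(u,v)<0$ for every element of $\mathcal{N}_{\lambda,\mu}^+$, hence $c_{\lambda,\mu}^+<0$; the bound $c_{\lambda,\mu}\leq c_{\lambda,\mu}^+$ is immediate from $\mathcal{N}_{\lambda,\mu}^+\subset\mathcal{N}_{\lambda,\mu}$.

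For part $(ii)$, the key is a uniform lower bound on $\|(u,v)\|$ over $\mathcal{N}_{\lambda,\mu}^-$. Using $\varphi''_{u,v}(1)<0$ together with \eqref{ma2} gives the reverse inequality
$$\int_\Omega |u|^\alpha|v|^\beta\,dx>\frac{p-q}{2(\alpha+\beta-q)}\|(u,v)\|^p,$$
while Young's inequality and the Sobolev embedding with constant $S$ (as in the derivation preceding \eqref{nehar40abc0}) yield $\int_\Omega|u|^\alpha|v|^\beta\,dx\leq S^{-(\alpha+\beta)/p}\|(u,v)\|^{\alpha+\beta}$. Combining these produces
$$\|(u,v)\|^{\alpha+\beta-p}>\frac{p-q}{2(\alpha+\beta-q)}S^{(\alpha+\beta)/p}.$$

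Next I would use the other rewriting
$$J_{\lambda,\mu}(u,v)=\frac{\alpha+\beta-p}{p(\alpha+\beta)}\|(u,v)\|^p-\frac{\alpha+\beta-q}{q(\alpha+\beta)}\int_\Omega(\lambda|u|^q+\mu|v|^q)\,dx,$$
and control the second integral with the same H\"older--Sobolev bound used in Lemma~\ref{firb2}, namely $\int_\Omega(\lambda|u|^q+\mu|v|^q)\,dx\leq S^{-q/p}|\Omega|^{(\alpha+\beta-q)/(\alpha+\beta)}(\lambda^{\frac{p}{p-q}}+\mu^{\frac{p}{p-q}})^{(p-q)/p}\|(u,v)\|^q$. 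Factoring $\|(u,v)\|^q$ out gives
$$J_{\lambda,\mu}(u,v)\geq \|(u,v)\|^q\Bigl[\tfrac{\alpha+\beta-p}{p(\alpha+\beta)}\|(u,v)\|^{p-q}-\tfrac{\alpha+\beta-q}{q(\alpha+\beta)}S^{-q/p}|\Omega|^{\frac{\alpha+\beta-q}{\alpha+\beta}}(\lambda^{\frac{p}{p-q}}+\mu^{\frac{p}{p-q}})^{\frac{p-q}{p}}\Bigr],$$
and inserting the lower bound on $\|(u,v)\|^{p-q}$ into the bracket leaves a quantity that a direct computation identifies with a positive constant precisely when $\lambda^{\frac{p}{p-q}}+\mu^{\frac{p}{p-q}}<(q/p)^{\frac{p}{p-q}}\Lambda_1$, which is the stated smallness range; the resulting lower bound $d_0$ depends only on $\lambda,\mu,p,q,n,s,|\Omega|$. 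The main obstacle is not conceptual but bookkeeping: the exponents $\frac{p}{p-q}$, $\frac{p}{\alpha+\beta-p}$ and $\frac{\alpha+\beta}{\alpha+\beta-p}+\frac{q}{p-q}$ appearing in $\Lambda_1$ must line up exactly so that the threshold for positivity of the bracket comes out to the factor $(q/p)^{p/(p-q)}$ announced in the statement.
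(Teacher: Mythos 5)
Your proposal is correct and follows essentially the same route as the paper's proof: part $(i)$ via the identity \eqref{ma2} and the rewriting of $J_{\lambda,\mu}$ that eliminates the $\lambda,\mu$-integral, yielding the coefficient $-\frac{(p-q)(\alpha+\beta-p)}{pq(\alpha+\beta)}<0$; part $(ii)$ via the uniform lower bound $\|(u,v)\|^{\alpha+\beta-p}>\frac{p-q}{2(\alpha+\beta-q)}S^{\frac{\alpha+\beta}{p}}$ on $\mathcal{N}_{\lambda,\mu}^-$ combined with the H\"older--Sobolev estimate, with the threshold working out exactly to $(q/p)^{\frac{p}{p-q}}\Lambda_1$. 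No gaps.
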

\begin{proof}
	Let us prove $(i)$. Let $(u,v)\in \mathcal{N}_{\lambda,\mu}^+$. Then we have $\varphi_{u,v}''(1) >0$, which combined with \eqref{ma2} yields 
	\begin{eqnarray*}
		\frac{p-q}{2( \alpha+\beta -q)}\|(u,v)\|^p>\int_\Omega |u|^{\alpha}|v|^\beta dx.
	\end{eqnarray*}
	Therefore
	\begin{align*}
		J_{\lambda,\mu}(u,v)&=\Big(\frac{1}{p}-\frac{1}{q}\Big)\|(u,v)\|^p+2\Big(\frac{1}{q}-\frac{1}{\alpha+\beta} \Big)\int_\Omega |u|^{\alpha}|v|^\beta dx \\
		&<\Big[\Big(\frac{1}{p}-\frac{1}{q}\Big)+\Big(\frac{1}{q}-\frac{1}{\alpha+\beta}\Big)\frac{p-q}{ \alpha+\beta -q}\Big]\|(u,v)\|^p
		=-\frac{(p-q)(\alpha+\beta-p)}{pq(\alpha+\beta)}\|(u,v)\|^p<0.
	\end{align*}
	Therefore, $c_{\lambda,\mu}\leq c_{\lambda,\mu}^+<0$ follows from the definitions of $c_{\lambda,\mu}$ and $c_{\lambda,\mu}^+$.
	Let us now come to $(ii)$. Let $(u,v)\in \mathcal{N}_{\lambda,\mu}^-$. Then, we have $\varphi_{u,v}''(1)<0$, which combined with \eqref{ma2} yields 
	\begin{equation*}
		\frac{p-q}{2( \alpha+\beta -q)}\|(u,v)\|^p<\int_\Omega |u|^{\alpha}|v|^\beta dx.
	\end{equation*}
	By Young inequality and  the definition of $S$,  we obtain
	$$
	\int_\Omega |u|^{\alpha}|v|^\beta dx\leq   \frac{\alpha}{\alpha+\beta}\int_\Omega  |u|^{\alpha+\beta} dx+
	\frac{\beta}{\alpha+\beta}\int_\Omega  |v|^{\alpha+\beta} dx \leq S^{-\frac{\alpha+\beta}{p}}\|(u,v)\|^{\alpha+\beta}.
	$$
	Thus,
	\begin{equation*}
		\|(u,v)\|>\Big(\frac{p-q}{2( \alpha+\beta -q)}\Big)^{\frac{1}{\alpha+\beta-p}}S^{\frac{\alpha+\beta}{p(\alpha+\beta-p)}}.
	\end{equation*}
	Moreover, by H\"{o}lder inequality and   the definition of $S$, we find
	$$
	\int_\Omega (\lambda|u|^{q}+\mu|v|^q)dx\leq S^{-\frac{q}{p}}|\Omega|^{\frac{\alpha+\beta-q}{\alpha+\beta}}\left(\lambda^{\frac{p}{p-q}}+\mu^{\frac{p}{p-q}}\right)^{\frac{p-q}{p}}\|(u,v)\|^q.
	$$
	Therefore, if $0<\lambda^{\frac{p}{p-q}}+\mu^{\frac{p}{p-q}}<\big(\frac{q}{p}\big)^{\frac{p}{p-q}}\Lambda_1$, then we have
	\begin{align*}
		J_{\lambda,\mu}(u,v)&\geq\|(u,v)\|^q\Big[\Big(\frac{1}{p}-\frac{1}{\alpha+\beta}\Big)\|(u,v)\|^{p-q}- \Big(\frac{1}{q}-\frac{1}{\alpha+\beta}\Big)S^{-\frac{q}{p}}|\Omega|^{\frac{\alpha+\beta-q}{\alpha+\beta}}
		\Big(\lambda^{\frac{p}{p-q}}+\mu^{\frac{p}{p-q}}\Big)^{\frac{p-q}{p}}\Big]\nonumber\\
		&> \|(u,v)\|^q\Big[\Big(\frac{1}{p}-\frac{1}{\alpha+\beta}\Big)\Big(\frac{p-q}{2( \alpha+\beta -q)}\Big)^{\frac{p-q}{\alpha+\beta-p}}
		S^{\frac{(\alpha+\beta)(p-q)}{p(\alpha+\beta-p)}}- \nonumber\\
		&\qquad\qquad \Big(\frac{1}{q}-\frac{1}{\alpha+\beta}\Big)S^{-\frac{q}{p}}|\Omega|^{\frac{\alpha+\beta-q}{\alpha+\beta}}
		\Big(\lambda^{\frac{p}{p-q}}+\mu^{\frac{p}{p-q}}\Big)^{\frac{p-q}{p}}\Big]\geq d_0>0.
	\end{align*}
	This completes the proof.
\end{proof}

\subsection{The first solution}

 \noindent
 We now prove the existence of a first solution $(u_1,v_1)$ to \eqref{frac1}. First, we need some preliminary results.
 	
 	\begin{lemma}[Curves into $\mathcal{N}_{\lambda,\mu}$]
 		\label{lema1}
 		Let $\Lambda_1$ be as in \eqref{lambda1} and assume $0<\lambda^{\frac{p}{p-q}}+\mu^{\frac{p}{p-q}}<\Lambda_1$. Then
 		for any $z=(u,v)\in\mathcal{N}_{\lambda,\mu}$ there exists $\epsilon>0$ and a differentiable map 
 		$$
 		\xi: B(0,\epsilon)\subset E\rightarrow\mathbb{R}^+,
 		$$  
 		such that
 		$\xi(0)=1$ and $\xi(\omega)(z-\omega)\in\mathcal{N}_{\lambda,\mu}$ and
 		\begin{equation}\label{a41h}
 			\langle\xi'(0),\omega\rangle=- \frac{p\mathcal{A}(u,\omega_1)+p\mathcal{A}(v,\omega_2)-K_{\lambda,\mu}(z,\omega)-2\displaystyle\int_\Omega(\alpha|u|^{\alpha-2}u\omega_1|v|^\beta+\beta|u|^\alpha|v|^{\beta-2}v\omega_2)dx}
 			{(p-q)\|(u,v)\|^p-2(\alpha+\beta-q)
 				\displaystyle\int_{\Omega}|u|^{\alpha}|u|^{\beta}dx},
 		\end{equation}
 		for all $\omega=(\omega_1,\omega_2)\in E$, where
 		$$
 		K_{\lambda,\mu}(z,\omega)=q\int_\Omega(\lambda|u|^{q-2}u\omega_1+\mu|v|^{q-2}v\omega_2)dx.
 		$$
 	\end{lemma}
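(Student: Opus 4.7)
The plan is to apply the implicit function theorem to pin down $\xi$ as an implicit root of a scalar equation that encodes the Nehari constraint. Define the auxiliary functional $F:\mathbb{R}^+\times E\to\mathbb{R}$ by
\begin{equation*}
F(t,\omega)=t^p\|z-\omega\|^p-t^q\int_\Omega\!\bigl(\lambda|u-\omega_1|^q+\mu|v-\omega_2|^q\bigr)dx-2t^{\alpha+\beta}\int_\Omega\!|u-\omega_1|^\alpha|v-\omega_2|^\beta dx,
\end{equation*}
so that, for any $t>0$ with $t(z-\omega)\neq(0,0)$, one has $F(t,\omega)=0$ if and only if $t(z-\omega)\in\mathcal{N}_{\lambda,\mu}$. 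Observe that $F(1,0)=0$ because $z\in\mathcal{N}_{\lambda,\mu}$. The map $F$ is of class $C^1$: the fractional Gagliardo term is Fréchet differentiable on $X_0$ with derivative provided by $\mathcal{A}$, the $L^q$ term is $C^1$ since $q>1$, and the coupling term is $C^1$ by a direct computation based on $\alpha,\beta>1$ together with Hölder's inequality and the Sobolev embedding $X_0\hookrightarrow L^{p_s^\ast}(\Omega)$.

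Next I verify the nondegeneracy needed to invert in $t$. A direct calculation gives
\begin{equation*}
\partial_t F(1,0)=p\|z\|^p-q\!\int_\Omega\!(\lambda|u|^q+\mu|v|^q)dx-2(\alpha+\beta)\!\int_\Omega\!|u|^\alpha|v|^\beta dx=\varphi_{u,v}''(1),
\end{equation*}
where the last equality follows upon using the Nehari identity $\|z\|^p=\int_\Omega(\lambda|u|^q+\mu|v|^q)dx+2\int_\Omega|u|^\alpha|v|^\beta dx$ together with \eqref{ma2}. Since the hypothesis $0<\lambda^{\frac{p}{p-q}}+\mu^{\frac{p}{p-q}}<\Lambda_1$ ensures $\mathcal{N}_{\lambda,\mu}^0=\emptyset$ by Lemma \ref{le1}, we have $\varphi_{u,v}''(1)\neq 0$, hence $\partial_t F(1,0)\neq 0$.

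By the implicit function theorem, there exist $\epsilon>0$ and a $C^1$ map $\xi:B(0,\epsilon)\subset E\to\mathbb{R}^+$ with $\xi(0)=1$ and $F(\xi(\omega),\omega)=0$ for all $\|\omega\|<\epsilon$; up to shrinking $\epsilon$, the positivity of $\xi$ and of $\|\xi(\omega)(z-\omega)\|$ is preserved by continuity, so $\xi(\omega)(z-\omega)\in\mathcal{N}_{\lambda,\mu}$. Differentiating the identity $F(\xi(\omega),\omega)=0$ at $\omega=0$ in the direction $\omega=(\omega_1,\omega_2)\in E$ gives $\partial_t F(1,0)\langle\xi'(0),\omega\rangle=-\partial_\omega F(1,0)\cdot\omega$. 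A term-by-term differentiation of $F$ in $\omega$ at $(1,0)$ yields
\begin{equation*}
\partial_\omega F(1,0)\cdot\omega=-p\mathcal{A}(u,\omega_1)-p\mathcal{A}(v,\omega_2)+K_{\lambda,\mu}(z,\omega)+2\!\int_\Omega\!\bigl(\alpha|u|^{\alpha-2}u|v|^\beta\omega_1+\beta|u|^\alpha|v|^{\beta-2}v\omega_2\bigr)dx,
\end{equation*}
and combining with the alternative form $\varphi_{u,v}''(1)=(p-q)\|z\|^p-2(\alpha+\beta-q)\int_\Omega|u|^\alpha|v|^\beta dx$ from \eqref{ma2} produces \eqref{a41h}.

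The only delicate point is the $C^1$ regularity of the Gagliardo term $\omega_1\mapsto\|u-\omega_1\|_{X_0}^p$ in a neighborhood of $0$, with derivative given by the form $\mathcal{A}$; this follows from the convexity of $\tau\mapsto|\tau|^p$ and dominated convergence, paralleling the standard argument that $J_{\lambda,\mu}\in C^1(E,\mathbb{R})$ already used in Section \ref{premed}. Everything else is routine implicit function theorem bookkeeping.
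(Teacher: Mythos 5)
Your proposal follows the same route as the paper's own proof: the identical auxiliary function $F_z(\xi,\omega)=\langle J'_{\lambda,\mu}(\xi(z-\omega)),\xi(z-\omega)\rangle$, the same nondegeneracy check $\partial_\xi F_z(1,0)=\varphi_{u,v}''(1)\neq 0$ guaranteed by Lemma \ref{le1}, and the implicit function theorem, with the added merit that you justify the $C^1$ regularity the paper takes for granted. One caveat: your stated $\partial_\omega F(1,0)\cdot\omega$, inserted into $\langle\xi'(0),\omega\rangle=-\partial_\omega F(1,0)\cdot\omega/\partial_t F(1,0)$, gives the \emph{negative} of the right-hand side of \eqref{a41h}; the leading minus sign in \eqref{a41h} appears to be a typo in the paper (harmless downstream, where only $\|\xi'(0)\|$ is used), but your claim that the combination ``produces \eqref{a41h}'' does not literally follow as written.
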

 	\begin{proof}
 		For $z=(u,v)\in\mathcal{N}_{\lambda,\mu}$, define a function $F_z: \mathbb{R}^+\times E\rightarrow\mathbb{R}$ by
 		\begin{align*}
 			F_z(\xi,\omega) &:=  \langle J'_{\lambda,\mu}(\xi(z-\omega)),\xi(z-\omega)\rangle\\
 			&=\xi^p\left(\mathcal{A}(u-\omega_1,u-\omega_1)+\mathcal{A}(v-\omega_2,v-\omega_2)\right)
 			-\xi^q \displaystyle\int_{\Omega}\left(\lambda|u-\omega_1|^{q}+\mu |v-\omega_2|^{q}\right)dx\nonumber\\
 			&-2\xi^{\alpha+\beta}\int_{\Omega}|u-\omega_1|^{\alpha}|v-\omega_2|^{\beta}dx,\quad\,\, \xi\in \mathbb{R}^+,\, \omega\in E.
 		\end{align*}
 		Then $F_z(1,0)=\langle J'_{\lambda,\mu}(z),z\rangle=0$ and, by Lemma \ref{le1}, we have
 		\begin{align*}
 			\frac{d}{d\xi} F_z(1,(0,0))&=p\|(u,v)\|^p-q\int_{\Omega}\left(\lambda|u|^{q}+\mu |v|^{q}\right)dx-
 			2(\alpha+\beta)\int_{\Omega}|u|^{\alpha}|v|^{\beta}dx \\
 			&= (p-q)\|(u,v)\|^p-2(\alpha+\beta-q)\int_{\Omega}|u|^{\alpha}|u|^{\beta}dx\not=0.
 		\end{align*}
 		By the Implicit Function Theorem there is $\epsilon>0$ and a $C^1$ map $\xi: B(0,\epsilon)\subset E\to \mathbb{R}^+$ with $\xi(0)=1$ and
 		\begin{equation*}\label{a41}
 			\langle\xi'(0),\omega\rangle=- \frac{p\mathcal{A}(u,\omega_1)+p\mathcal{A}(v,\omega_2)-K_{\lambda,\mu}(z,\omega)-2\displaystyle\int_\Omega(\alpha|u|^{\alpha-2}u\omega_1|v|^\beta+\beta|u|^\alpha|v|^{\beta-2}v\omega_2)dx}
 			{(p-q)\|(u,v)\|^p-2(\alpha+\beta-q)
 				\displaystyle\int_{\Omega}|u|^{\alpha}|u|^{\beta}dx},
 		\end{equation*}
 		and $F_z(\xi(\omega),\omega)=0$ for  all  $\omega\in B(0,\epsilon)$,
 		which  is equivalent to
 		$$
 		\langle J'_{\lambda,\mu}(\xi(\omega)(z-\omega)),\xi(\omega)(z-\omega)\rangle=0,
 		\quad\mbox{for\  all} \ \omega\in B(0,\epsilon),
 		$$
 		namely $\xi(\omega)(z-\omega)\in\mathcal{N}_{\lambda,\mu}.$
 	\end{proof}

 	\begin{lemma}[Curves into $\mathcal{N}^-_{\lambda,\mu}$]
 		\label{lema2}
 			Let $\Lambda_1$ be as in \eqref{lambda1} and assume $0<\lambda^{\frac{p}{p-q}}+\mu^{\frac{p}{p-q}}<\Lambda_1$. 
 		Then, for each $z\in\mathcal{N}^-_{\lambda,\mu}$, there is $\epsilon>0$ and a differentiable map
 		$$
 		\xi^-: B(0,\epsilon)\subset E\rightarrow\mathbb{R}^+
 		$$  
 		such that
 		$\xi^-(0)=1,$  $\xi^-(\omega)(z-\omega)\in\mathcal{N}^{-}_{\lambda,\mu}$ 
 		and
 		\begin{equation*}
 			\langle(\xi^-)'(0),\omega\rangle=- \frac{p\mathcal{A}(u,\omega_1)+p\mathcal{A}(v,\omega_2)-K_{\lambda,\mu}(z,\omega)-2\displaystyle\int_\Omega(\alpha|u|^{\alpha-2}u\omega_1|v|^\beta+\beta|u|^\alpha|v|^{\beta-2}v\omega_2)dx}
 			{(p-q)\|(u,v)\|^p-2(\alpha+\beta-q)
 				\displaystyle\int_{\Omega}|u|^{\alpha}|u|^{\beta}dx},
 		\end{equation*}
 		for every $\omega\in B(0;\epsilon)$.
 	\end{lemma}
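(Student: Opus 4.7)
The plan is to deduce the statement directly from Lemma \ref{lema1} by a continuity argument that exploits the openness of the defining condition of $\mathcal{N}^-_{\lambda,\mu}$ inside $\mathcal{N}_{\lambda,\mu}$.

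First, I would apply Lemma \ref{lema1} to the point $z=(u,v)\in\mathcal{N}^-_{\lambda,\mu}\subset\mathcal{N}_{\lambda,\mu}$ to obtain some $\epsilon_1>0$ together with a differentiable map $\xi:B(0,\epsilon_1)\subset E\to\mathbb{R}^+$ satisfying $\xi(0)=1$, $\xi(\omega)(z-\omega)\in\mathcal{N}_{\lambda,\mu}$ on $B(0,\epsilon_1)$, and the derivative identity \eqref{a41h} at $\omega=0$. The task then reduces to showing that, after possibly shrinking $\epsilon_1$, the image of this map lies inside the smaller set $\mathcal{N}^-_{\lambda,\mu}$.

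For this, I would use the identity \eqref{ma2}: a point $(\tilde u,\tilde v)\in\mathcal{N}_{\lambda,\mu}$ belongs to $\mathcal{N}^-_{\lambda,\mu}$ precisely when
\begin{equation*}
G(\tilde u,\tilde v) := (p-q)\|(\tilde u,\tilde v)\|^p - 2(\alpha+\beta-q)\int_\Omega |\tilde u|^{\alpha}|\tilde v|^{\beta}\,dx < 0.
\end{equation*}
Consider the composition $\omega\mapsto G(\xi(\omega)(z-\omega))$. This map is continuous on $B(0,\epsilon_1)$ because $\xi$ is differentiable, the norm on $E$ is continuous, and the continuous embedding $X_0\hookrightarrow L^{p_s^\ast}(\Omega)$ together with H\"older's inequality renders the functional $(\phi,\psi)\mapsto\int_\Omega|\phi|^{\alpha}|\psi|^{\beta}\,dx$ continuous on $E$. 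Moreover, at $\omega=0$ its value equals $\varphi_{u,v}''(1)$, which is strictly negative by assumption.

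By continuity, there exists $0<\epsilon\leq\epsilon_1$ such that $G(\xi(\omega)(z-\omega))<0$ for every $\omega\in B(0,\epsilon)$. Setting $\xi^- := \xi|_{B(0,\epsilon)}$ delivers the required restriction, with $\xi^-(0)=1$, $\xi^-(\omega)(z-\omega)\in\mathcal{N}^-_{\lambda,\mu}$, and the derivative formula at $0$ inherited verbatim from \eqref{a41h}. No genuine obstacle appears here; the only mildly delicate point is the continuity of $G$ along the curve, which is a straightforward consequence of the Sobolev embedding and H\"older's inequality.
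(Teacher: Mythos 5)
Your proposal is correct and follows essentially the same route as the paper: obtain the curve into $\mathcal{N}_{\lambda,\mu}$ from Lemma \ref{lema1}, observe that membership in $\mathcal{N}^-_{\lambda,\mu}$ is the open condition $\varphi''_{\tilde u,\tilde v}(1)<0$ (your functional $G$ is exactly $\varphi''_{\tilde u,\tilde v}(1)$ via \eqref{ma2}), and shrink $\epsilon$ by continuity since this quantity is strictly negative at $\omega=0$. The only difference is that you spell out why $G$ is continuous along the curve, which the paper leaves implicit.
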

 	\begin{proof}
 		Arguing as in the proof of Lemma \ref{lema1}, there is
 		$\epsilon>0$ and a differentiable map $\xi^-: B(0,\epsilon)\subset
 		E\rightarrow\mathbb{R}^+$ such that $\xi^-(0)=1,$
 		$\xi^-(\omega)(z-\omega)\in\mathcal{N}_{\lambda,\mu}$ for all $\omega\in B(0,\epsilon)$
 		and (\ref{a41h}). 
 		Since
 		\begin{equation*} 
 			\varphi_{u,v}''(1)
 			=(p-q)\|(u,v)\|^p-2(\alpha+\beta-q)\int_\Omega |u|^{\alpha}|v|^\beta dx<0,
 		\end{equation*}
 		by continuity we have
 		\begin{align*} 
 			\varphi_{\xi^-(\omega)(u-\omega_1),\xi^-(\omega)(v-\omega_2)}''(1)
 			&=(p-q)\|(\xi^-(\omega)(u-\omega_1),\xi^-(\omega)(v-\omega_2))\|^p\nonumber\\
 			&-
 			2((\alpha+\beta)-q)\int_\Omega |\xi^-(\omega)(u-\omega_1)|^{\alpha}|\xi^-(\omega)(v-\omega_2)|^\beta dx<0,
 		\end{align*}
 		if $\epsilon$ is sufficiently small, which implies $\xi^-(\omega)(z-\omega)\in\mathcal{N}^{-}_{\lambda,\mu}$.
 	\end{proof}

 	\begin{proposition}[$(PS)_{c_{\lambda,\mu}}$-sequences]
 		\label{Pss}
 		The following facts hold: \\
 		(i) If $0<\lambda^{\frac{p}{p-q}}+\mu^{\frac{p}{p-q}}<\Lambda_1$, then there is a $(PS)_{c_{\lambda,\mu}}$-sequence $\{(u_k,v_k)\}\subset\mathcal{N}_{\lambda,\mu}$ for $J_{\lambda,\mu}$;\\
 		(ii) If $0<\lambda^{\frac{p}{p-q}}+\mu^{\frac{p}{p-q}}<(q/p)^{\frac{p}{p-q}}\Lambda_1$, there is a $(PS)_{c_{\lambda,\mu}^-}$-sequence
 		$\{(u_k,v_k)\}\subset\mathcal{N}_{\lambda,\mu}^-$ for $J_{\lambda,\mu}$.
 	\end{proposition}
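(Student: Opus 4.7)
The plan is to apply Ekeland's variational principle on the Nehari manifolds $\mathcal{N}_{\lambda,\mu}$ and $\mathcal{N}_{\lambda,\mu}^-$, and then to convert the resulting constrained minimizing sequences into free Palais--Smale sequences for $J_{\lambda,\mu}$ by testing $\langle J_{\lambda,\mu}'(z_k),\cdot\rangle$ against the small perturbations $w_k(\omega):=\xi_k(\omega)(z_k-\omega)$ provided by Lemmas \ref{lema1} and \ref{lema2}. For (i), Lemma \ref{le1} gives $\mathcal{N}_{\lambda,\mu}=\mathcal{N}_{\lambda,\mu}^+\cup\mathcal{N}_{\lambda,\mu}^-$ under the assumption $\lambda^{p/(p-q)}+\mu^{p/(p-q)}<\Lambda_1$, which I first use to show that $\mathcal{N}_{\lambda,\mu}$ is closed in $E\setminus\{(0,0)\}$; by Lemma \ref{coercive}, $J_{\lambda,\mu}$ is coercive and bounded below there, so $c_{\lambda,\mu}\in\mathbb{R}$. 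Ekeland's principle then yields a minimizing sequence $z_k=(u_k,v_k)\in\mathcal{N}_{\lambda,\mu}$ with
$$
J_{\lambda,\mu}(z_k)<c_{\lambda,\mu}+\tfrac{1}{k},\qquad J_{\lambda,\mu}(w)\geq J_{\lambda,\mu}(z_k)-\tfrac{1}{k}\|w-z_k\|,\quad w\in\mathcal{N}_{\lambda,\mu},
$$
and coercivity gives $\|z_k\|\leq C$.

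Next, for $\omega\in E$ with $\|\omega\|$ below the radius $\epsilon_k$ of Lemma \ref{lema1}, the point $w_k(\omega):=\xi_k(\omega)(z_k-\omega)$ lies in $\mathcal{N}_{\lambda,\mu}$ with $\xi_k(0)=1$. Writing $w_k(\omega)-z_k=(\xi_k(\omega)-1)z_k-\xi_k(\omega)\omega$, inserting into Ekeland's inequality, using $\langle J_{\lambda,\mu}'(z_k),z_k\rangle=0$, and expanding to first order yields
$$
|\langle J_{\lambda,\mu}'(z_k),\omega\rangle|\leq\tfrac{C}{k}\bigl(1+|\langle\xi_k'(0),\omega\rangle|\bigr)\|\omega\|+o(\|\omega\|),
$$
so that $\|J_{\lambda,\mu}'(z_k)\|_{E^*}\leq \tfrac{C}{k}\bigl(1+\|\xi_k'(0)\|_{E^*}\bigr)$. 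Hence the problem is reduced to a uniform bound on $\|\xi_k'(0)\|_{E^*}$.

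The main obstacle is precisely that uniform bound. By \eqref{a41h}, boundedness of $\{z_k\}$ combined with H\"older and Sobolev immediately controls the numerator of $\langle\xi_k'(0),\omega\rangle$ by $C\|\omega\|$, so what is delicate is the denominator $\varphi_{z_k}''(1)$. Lemma \ref{le1} only tells us it is nonzero pointwise; I will argue by contradiction that $|\varphi_{z_k}''(1)|\geq \delta>0$ uniformly. If not, along a subsequence $\varphi_{z_k}''(1)\to 0$; coercivity keeps $\{z_k\}$ bounded, while the constraint identity $\|z_k\|^p\leq C\|z_k\|^q+C\|z_k\|^{\alpha+\beta}$ valid on $\mathcal{N}_{\lambda,\mu}$ forces $\|z_k\|$ to stay away from zero, so a weak limit $z_\infty\neq 0$ exists. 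Passing to the limit in the strict Nehari inequality $2\int|u|^\alpha|v|^\beta\,dx<\Psi_{u,v}(t_{\max}(u,v))$ established in the proof of Lemma \ref{firb2} (which holds with a uniform slack under the \emph{strict} assumption on $\lambda^{p/(p-q)}+\mu^{p/(p-q)}$) yields $\varphi_{z_\infty}''(1)\neq 0$, contradicting $\varphi_{z_k}''(1)\to 0$.

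For part (ii), the argument is identical with $\mathcal{N}_{\lambda,\mu}^-$ replacing $\mathcal{N}_{\lambda,\mu}$ and Lemma \ref{lema2} replacing Lemma \ref{lema1}. The stronger smallness condition $\lambda^{p/(p-q)}+\mu^{p/(p-q)}<(q/p)^{p/(p-q)}\Lambda_1$ is used in two places: via Lemma \ref{lema}(ii) it gives $c_{\lambda,\mu}^->d_0>0$ and the uniform lower bound $\|z\|\geq c_0>0$ for $z\in\mathcal{N}_{\lambda,\mu}^-$, which together with Lemma \ref{le1} imply that $\mathcal{N}_{\lambda,\mu}^-$ is closed in $E$; and it underlies the construction in Lemma \ref{lema2}, so that the Ekeland perturbations $\xi_k^-(\omega)(z_k-\omega)$ remain inside $\mathcal{N}_{\lambda,\mu}^-$. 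Repeating the computation verbatim produces a bounded $(PS)_{c_{\lambda,\mu}^-}$-sequence in $\mathcal{N}_{\lambda,\mu}^-$ for $J_{\lambda,\mu}$, concluding the proof.
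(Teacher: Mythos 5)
Your overall route coincides with the paper's: Ekeland's variational principle on $\mathcal{N}_{\lambda,\mu}$ (resp.\ $\mathcal{N}_{\lambda,\mu}^-$), conversion to a free Palais--Smale sequence via the perturbations $\xi_k(\omega)(z_k-\omega)$ of Lemmas \ref{lema1}--\ref{lema2}, and reduction to a uniform bound on $\|\xi_k'(0)\|_{E^*}$, i.e.\ to a uniform lower bound on the denominator $\big|(p-q)\|z_k\|^p-2(\alpha+\beta-q)\int_\Omega|u_k|^\alpha|v_k|^\beta dx\big|=|\varphi_{z_k}''(1)|$. Up to that point the argument is sound.

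The final step, however, has a genuine gap. Your contradiction argument extracts a weak limit $z_\infty$ and claims $\varphi_{z_\infty}''(1)=\lim_k\varphi_{z_k}''(1)=0$ while $\varphi_{z_\infty}''(1)\neq 0$. First, the uniform lower bound $\|z_k\|\geq c_0>0$ does \emph{not} prevent $z_\infty=0$: the norm is only weakly lower semicontinuous, so the bound does not pass to the weak limit. Second, and more seriously, neither $\|z_k\|^p$ nor $\int_\Omega|u_k|^\alpha|v_k|^\beta dx$ converges to the corresponding quantity for $z_\infty$ under weak convergence, since $\alpha+\beta=p_s^\ast$ is exactly the exponent at which the embedding $X_0\hookrightarrow L^{p_s^\ast}$ fails to be compact --- this lack of compactness is the central difficulty of the whole paper, so one cannot ``pass to the limit'' in $\varphi_{z_k}''(1)$. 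Third, the ``uniform slack'' in the inequality $2\int_\Omega|u|^\alpha|v|^\beta dx<\Psi_{u,v}(t_{\max}(u,v))$ is asserted but never quantified, and even granted it you would still need to convert it into a lower bound for $|\varphi_{z_k}''(1)|$ at the critical points of the fibering map. The paper avoids all of this by a purely quantitative contradiction along the sequence itself: assuming $(p-q)\|z_k\|^p-2(\alpha+\beta-q)\int_\Omega|u_k|^\alpha|v_k|^\beta dx=o_k(1)$ and combining it with the Nehari identity yields the two relations \eqref{r7ap}--\eqref{r7ui}; Young/Sobolev applied to the first gives the lower bound \eqref{frac5a} on $\|z_k\|$, H\"older/Sobolev applied to the second gives the upper bound \eqref{frac5b}, and their compatibility forces $\lambda^{\frac{p}{p-q}}+\mu^{\frac{p}{p-q}}\geq\Lambda_1$, contradicting the hypothesis. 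You should replace your weak-limit argument with this (or an equivalent sequential) computation; as written, the key estimate $\sup_k\|\xi_k'(0)\|_{E^*}<\infty$ is not established.
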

 	\begin{proof}
 		(i) By Ekeland Variational Principle, there exists a minimizing sequence $\{(u_k,v_k)\}\subset\mathcal{N}_{\lambda,\mu}$ such that
 		\begin{equation}\label{r1}
 			J_{\lambda,\mu}(u_k,v_k)  <c_{\lambda,\mu}+\frac{1}{k},   \qquad
 			J_{\lambda,\mu}(u_k,v_k)<J_{\lambda,\mu}(w_1,w_2)+\frac{1}{k}\|(w_1,w_2)-(u_k,v_k)\|,
 		\end{equation}
 		for each $(w_1,w_2)\in\mathcal{N}_{\lambda,\mu}$.
 		Taking $k$ large and using $c_{\lambda,\mu}<0$, we have
 		\begin{equation}\label{r2}
 			J_{\lambda,\mu}(u_k,v_k)= \Big(\frac{1}{p}-\frac{1}{\alpha+\beta}\Big)\|(u_k,v_k)\|^p
 			-\Big(\frac{1}{q}-\frac{1}{\alpha+\beta}\Big)\int_{\Omega}(\lambda|u_{k}|^q+\mu|v_{k}|^q)dx
 			<\frac{c_{\lambda,\mu}}{2}.   
 		\end{equation}
 		This yields that
 		\begin{equation}\label{r3}
 			-\frac{q(\alpha+\beta)}{2(\alpha+\beta-q)}c_{\lambda,\mu}<\int_{\Omega}(\lambda|u_{k}|^q+\mu|v_{k}|^q)dx
 			\leq S^{-\frac{q}{p}}|\Omega|^{\frac{\alpha+\beta-q}{\alpha+\beta}}\left(\lambda^{\frac{p}{p-q}}+\mu^{\frac{p}{p-q}}\right)^{\frac{p-q}{p}}\|(u_k,v_k)\|^q.   
 		\end{equation}
 		Consequently, $(u_k,v_k)\not=0$ and combining with
 		\eqref{r2} and \eqref{r3} and using H\"{o}lder inequality
 		\begin{align}
 		& \|(u_k,v_k)\|>\Big[-\frac{q(\alpha+\beta)}{2(\alpha+\beta-q)}c_{\lambda,\mu}S^{\frac{q}{p}}|\Omega|^{-\frac{\alpha+\beta-q}{\alpha+\beta}}\left(\lambda^{\frac{p}{p-q}}+\mu^{\frac{p}{p-q}}\right)^{\frac{q-p}{p}}\Big]^{\frac{1}{q}}, \notag\\
 	&
 		\label{r4}
 			\|(u_k,v_k)\|<\Big[\frac{p(\alpha+\beta-q)}{q(\alpha+\beta-p)}S^{-\frac{q}{p}}|\Omega|^{\frac{\alpha+\beta-q}{\alpha+\beta}}\left(\lambda^{\frac{p}{p-q}}+\mu^{\frac{p}{p-q}}\right)^{\frac{p-q}{p}}\Big]^{\frac{1}{p-q}}.
 		\end{align}
 		Now we prove that $\|J'_{\lambda,\mu}(u_k,v_k)\|_{E^{-1}}\to 0$ as $k\rightarrow\infty$.
 		Fix $k\in{\mathbb N}$. By applying Lemma  \ref{lema1} to $z_k=(u_k,v_k)$, we obtain the function $\xi_k:
 		B(0,\epsilon_k)\rightarrow{\mathbb R}^+$ for some $\epsilon_k>0,$ such that
 		$\xi_k(h)(z_k-h)\in\mathcal{N}_{\lambda,\mu}.$ Take $0<\rho<\epsilon_k.$
 		Let $w\in E$ with $w\not\equiv0$ and put $h^*=\frac{\rho w}{\|w\|}.$
 		We set  $h_{\rho}=\xi_k(h^*)(z_k-h^*)$. Then
 		$h_{\rho}\in\mathcal{N}_{\lambda,\mu},$ and we have from (\ref{r1}) 
 		$$
 		J_{\lambda,\mu}(h_\rho)-J_{\lambda,\mu}(z_k)\geq-\frac{1}{k}\|h_\rho-z_k\|.
 		$$
 		By the Mean Value Theorem, we get
 		$$\langle J'_{\lambda,\mu}(z_k),h_\rho-z_k\rangle+o(\|h_\rho-z_k\|)\geq-\frac{1}{k}\|h_\rho-z_k\|.
 		$$
 		Thus, we have
 		\begin{equation*}
 			\langle J'_{\lambda,\mu}(z_k),-h^*\rangle+(\xi_k(h^*)-1)\langle J'_{\lambda,\mu}(z_k),z_k-h^*\rangle\geq-\frac{1}{k}\|h_\rho-z_k\|+o(\|h_\rho-z_k\|).
 		\end{equation*}
 		Whence, from $\xi_k(h^*)(z_k-h^*)\in\mathcal{N}_{\lambda,\mu},$ it follows that
 		\begin{equation*}
 			-\rho\big\langle J'_{\lambda,\mu}(z_k),\frac{w}{\|w\|}\big\rangle+(\xi_k(h^*)-1)
 			\langle J'_{\lambda,\mu}(z_k)-J'_{\lambda,\mu}(h_\rho),z_k-h^*\rangle  
 			\geq-\frac{1}{k}\|h_\rho-z_k\|+o(\|h_\rho-z_k\|).
 		\end{equation*}
 		Hence, we get
 		\begin{equation}\label{r5}
 			\big\langle J'_{\lambda,\mu}(z_k),\frac{w}{\|w\|}\big\rangle\leq \frac{1}{k\rho}\|h_\rho-z_k\|
 			+\frac{o(\|h_\rho-z_k\|)}{\rho} + \frac{(\xi_k(h^*)-1)}{\rho}\langle J'_{\lambda,\mu}(z_k)-J'_{\lambda,\mu}(h_\rho),z_k-h^*\rangle.  \notag
 		\end{equation}
 		Since $\|h_\rho-z_k\|\leq\rho|\xi_k(h^*)|+|\xi_k(h^*)-1|\|z_k\|$ and
 		$$
 		\lim_{\rho\rightarrow0}\frac{|\xi_k(h^*)-1|}{\rho}\leq\|\xi_k'(0)\|.
 		$$
 		Fixed $k\in{\mathbb N}$, if $\rho\rightarrow0$ in \eqref{r5}, then by virtue of
 		\eqref{r4} we can choose $C>0$ independent of $\rho$ such that
 		\begin{equation*}
 			\Big\langle
 			J'_{\lambda,\mu}(z_k),\frac{w}{\|w\|}\Big\rangle\leq\frac{C}{k}(1+\|\xi_k'(0)\|).
 		\end{equation*}
 		Thus, we are done if $\sup_{k\in {\mathbb N}}\|\xi_k'(0)\|_{E^*}<\infty$.
 		By \eqref{a41h}, \eqref{r4} and  H\"{o}lder inequality, we have
 		$$
 		\big|\langle\xi_k'(0),h\rangle\big|\leq\frac{C_1\|h\|}{\Big|(p-q)\|(u_k,v_k)\|^p-2(\alpha+\beta-q)
 			\displaystyle\int_{\Omega}|u_{k}|^{\alpha}v_{k}|^{\beta}dx\Big|}
 		$$
 		for some $C_1>0.$ We only need to prove that
 		$$
 		\Big|{(p-q)\|(u_k,v_k)\|^p-2(\alpha+\beta-q)
 			\int_{\Omega}|u_{k}|^{\alpha}|v_{k}|^{\beta}dx}\Big|\geq C_2,
 		$$
 		for some $C_2>0$ and $k$ large. By contradiction, suppose there is a subsequence $\{(u_k,v_k)\}_{k\in{\mathbb N}}$ with
 		\begin{eqnarray}\label{r7a}
 			(p-q)\|(u_k,v_k)\|^p-2(\alpha+\beta-q)
 			\int_{\Omega}|u_{k}|^{\alpha}|v_{k}|^{\beta}dx=o_k(1).
 		\end{eqnarray}
 		By  \eqref{r7a} and the fact that $(u_k,v_k)\in\mathcal{N}_{\lambda,\mu}$, we have
 		\begin{align}\label{r7ap}
 &			\|(u_k,v_k)\|^p=\frac{2(\alpha+\beta-q)}{p-q}\int_{\Omega}|u_{k}|^{\alpha}|v_{k}|^{\beta}dx+o_k(1), \\
 		\label{r7ui}
 &			\|(u_k,v_k)\|^p=\frac{\alpha+\beta-q}{\alpha+\beta-p}\int_\Omega(\lambda|u_k|^q+\mu|v_k|^q)dx+o_k(1).
 		\end{align}
 		By Young inequality, it follows that $\int_\Omega  |u_k|^{\alpha}|v_k|^\beta dx\leq  S^{-\frac{\alpha+\beta}{p}}\|(u_k,v_k)\|^{\alpha+\beta}.$
 		By this and (\ref{r7ap}), we get
 		\begin{eqnarray}\label{frac5a}
 			\|(u_k,v_k)\|\geq\left(\frac{p-q}{2(\alpha+\beta-q)}S^{\frac{\alpha+\beta}{p}}\right)^{\frac{1}{\alpha+\beta-p}}+o_k(1),
 		\end{eqnarray}
 		Moreover, from \eqref{r7ui}, and by H\"{o}lder inequality, we obtain
 		\begin{equation*}
 			\|(u_k,v_k)\|^p\leq \frac{\alpha+\beta-q}{\alpha+\beta-p}  |\Omega|^{\frac{\alpha+\beta-q}{\alpha+\beta}}S^{-\frac{q}{p}}
 			\left(\lambda^{\frac{p}{p-q}}+\mu^{\frac{p}{p-q}}\right)^{\frac{p-q}{p}}  \|(u_k,v_k)\|^q+o_k(1).
 		\end{equation*}
 		Thus
 		\begin{equation}\label{frac5b}
 			\|(u_k,v_k)\|\leq\left(\frac{\alpha+\beta-q}{\alpha+\beta-p}S^{-\frac{q}{p}}|\Omega|^{\frac{\alpha+\beta-q}{\alpha+\beta}}\right)^{\frac{1}{p-q}}
 			\left(\lambda^{\frac{p}{p-q}}+\mu^{\frac{p}{p-q}}\right)^{\frac{1}{p}}+o_k(1).
 		\end{equation}
 		From (\ref{frac5a}) and (\ref{frac5b}), and for $k$ large enough, we get
 		\begin{eqnarray*}\label{frac5}
 			\lambda^{\frac{p}{p-q}}+\mu^{\frac{p}{p-q}}\geq\left(\frac{p-q}{2(\alpha+\beta-q)} \right)^{\frac{p}{\alpha+\beta-p}}
 			\left(\frac{ \alpha+\beta-q}{\alpha+\beta-p}|\Omega|^{\frac{\alpha+\beta-q}{\alpha+\beta}}\right)^{-\frac{p}{p-q}}S^{\frac{\alpha+\beta}{\alpha+\beta-p}+\frac{q}{p-q}}
 			=\Lambda_1.
 		\end{eqnarray*}
 		which contradicts $0<\lambda^{\frac{p}{p-q}}+\mu^{\frac{p}{p-q}}<\Lambda_1$. Therefore,
 		$$
 		\big\langle J'_{\lambda,\mu}(u_k,v_k),\|w\|^{-1}w\big\rangle\leq\frac{C}{k}.
 		$$
 		This proves (i). By Lemma \ref{lema2}, using the same argument we can get (ii).
 	\end{proof}

 	\noindent
Here is the main result of the section.

\begin{proposition}[Existence of the first solution]
	\label{subcritical1}
 Let $\Lambda_1$ be as in \eqref{lambda1}. Assume that $0<\lambda^{\frac{p}{p-q}}+\mu^{\frac{p}{p-q}}<\Lambda_1$. Then 
there exists $(u_1,v_1)\in \mathcal{N}_{\lambda,\mu}^+$ such that 

(1) $J_{\lambda,\mu}(u_1,v_1)= c_{\lambda,\mu}=c_{\lambda,\mu}^+<0$;

(2) $(u_1,v_1)$ is a solution of problem (\ref{frac1}).
\end{proposition}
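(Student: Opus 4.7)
The natural strategy is to realize $(u_1,v_1)$ as the strong limit in $E$ of a Palais–Smale sequence sitting on $\mathcal{N}_{\lambda,\mu}$ at the (negative) level $c_{\lambda,\mu}$, and then to use the sign of the energy to force the limit into the ``stable'' branch $\mathcal{N}_{\lambda,\mu}^+$. Concretely, I would first apply Proposition~\ref{Pss}(i) to obtain a sequence $\{(u_k,v_k)\}\subset \mathcal{N}_{\lambda,\mu}$ with $J_{\lambda,\mu}(u_k,v_k)\to c_{\lambda,\mu}$ and $J'_{\lambda,\mu}(u_k,v_k)\to 0$ in $E^{\ast}$, exploiting that $0<\lambda^{p/(p-q)}+\mu^{p/(p-q)}<\Lambda_1$. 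By Lemma~\ref{lema}(i) we already know $c_{\lambda,\mu}\leq c_{\lambda,\mu}^+<0$.

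Next I would check that this level lies inside the compactness range of Lemma~\ref{pscond}, i.e.\ that $c_{\lambda,\mu}<c_\infty$. Since $c_{\lambda,\mu}<0$, it is enough to choose the constant $\Lambda_{\ast}$ in Theorem~\ref{fracmain} small enough (in particular $\Lambda_\ast\leq \Lambda_1$) so that
$$
C_0\left(\lambda^{\frac{p}{p-q}}+\mu^{\frac{p}{p-q}}\right)<\frac{2s}{n}\left(\frac{S_{\alpha,\beta}}{2}\right)^{\frac{n}{ps}},
$$
which makes $c_\infty>0>c_{\lambda,\mu}$. Lemma~\ref{pscond} then yields a subsequence, still denoted $(u_k,v_k)$, converging strongly in $E$ to some $(u_1,v_1)$. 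Passing to the limit in the two conditions defining the $(PS)$-sequence gives at once $J_{\lambda,\mu}(u_1,v_1)=c_{\lambda,\mu}$ and $J'_{\lambda,\mu}(u_1,v_1)=0$; by Definition~\ref{weaksolutions} this is already assertion~$(2)$.

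It remains to identify the branch of $\mathcal{N}_{\lambda,\mu}$ to which $(u_1,v_1)$ belongs. First, $(u_1,v_1)\neq (0,0)$, because $J_{\lambda,\mu}(0,0)=0$ whereas $J_{\lambda,\mu}(u_1,v_1)=c_{\lambda,\mu}<0$. Strong convergence and $(u_k,v_k)\in \mathcal{N}_{\lambda,\mu}$ show $(u_1,v_1)\in \mathcal{N}_{\lambda,\mu}$. Since $\mathcal{N}_{\lambda,\mu}^0=\emptyset$ by Lemma~\ref{le1}, we have $(u_1,v_1)\in \mathcal{N}_{\lambda,\mu}^+\cup\mathcal{N}_{\lambda,\mu}^-$. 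Tightening the threshold so that also $\Lambda_{\ast}\leq (q/p)^{p/(p-q)}\Lambda_1$, Lemma~\ref{lema}(ii) gives $c_{\lambda,\mu}^->0$, so $(u_1,v_1)\notin \mathcal{N}_{\lambda,\mu}^-$ and thus $(u_1,v_1)\in \mathcal{N}_{\lambda,\mu}^+$. Combining $c_{\lambda,\mu}^+\leq J_{\lambda,\mu}(u_1,v_1)=c_{\lambda,\mu}\leq c_{\lambda,\mu}^+$ yields assertion~$(1)$.

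The only delicate point is placing the ground–state level $c_{\lambda,\mu}$ inside the admissible Palais–Smale interval $(-\infty,c_\infty)$; this is precisely what dictates the magnitude of $\Lambda_{\ast}$ (here one takes $\Lambda_{\ast}$ as the minimum of $\Lambda_1$, $(q/p)^{p/(p-q)}\Lambda_1$ and the quantity making $c_\infty$ strictly positive). The remaining parts — boundedness and extraction of the PS sequence, strong convergence, and compatibility with the Nehari constraint — are direct consequences of the preliminary lemmas.
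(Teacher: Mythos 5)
Your strategy is sound in outline but it does not prove the proposition \emph{as stated}: you establish the conclusion only under smallness conditions on $\lambda,\mu$ that are strictly stronger than the hypothesis $0<\lambda^{\frac{p}{p-q}}+\mu^{\frac{p}{p-q}}<\Lambda_1$. Two extra constraints creep in. First, to invoke the compactness range of Lemma~\ref{pscond} you need $c_{\lambda,\mu}<c_\infty$, and since all you know a priori is $c_{\lambda,\mu}<0$, you must force $c_\infty>0$, i.e.\ $C_0\bigl(\lambda^{\frac{p}{p-q}}+\mu^{\frac{p}{p-q}}\bigr)<\frac{2s}{n}\bigl(\frac{S_{\alpha,\beta}}{2}\bigr)^{\frac{n}{ps}}$ --- a condition that does not follow from $<\Lambda_1$. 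Second, to exclude $(u_1,v_1)\in\mathcal{N}_{\lambda,\mu}^-$ you call on Lemma~\ref{lema}(ii), which requires the tighter threshold $(q/p)^{\frac{p}{p-q}}\Lambda_1$. You acknowledge both by shrinking $\Lambda_\ast$, which is harmless for Theorem~\ref{fracmain} (indeed $\Lambda_2\le\delta_1$ already guarantees $c_\infty>0$ there), but Proposition~\ref{subcritical1} claims the existence of $(u_1,v_1)$ for the full range $<\Lambda_1$, and your argument leaves that open.

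The paper's proof avoids both issues and is worth comparing with. It never uses the $(PS)_c$ condition of Lemma~\ref{pscond} at the negative level: starting from the same sequence given by Proposition~\ref{Pss}(i), it passes to a weak limit, uses $c_{\lambda,\mu}<0$ together with the identity $J_{\lambda,\mu}(u_k,v_k)=\frac{\alpha+\beta-p}{p(\alpha+\beta)}\|(u_k,v_k)\|^p-\frac{\alpha+\beta-q}{q(\alpha+\beta)}\int_\Omega(\lambda|u_k|^q+\mu|v_k|^q)\,dx$ on the Nehari manifold to show the limit is nontrivial, and then gets strong convergence ``for free'' from weak lower semicontinuity of the norm plus strong $L^q$ convergence of the subcritical term (Fatou squeezes $J_{\lambda,\mu}(u_1,v_1)$ between $c_{\lambda,\mu}$ and $c_{\lambda,\mu}$, forcing $\|(u_k,v_k)\|\to\|(u_1,v_1)\|$). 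No energy threshold is needed because at a negative level the critical term never enters the lower-semicontinuity argument. Likewise, membership in $\mathcal{N}_{\lambda,\mu}^+$ is obtained not from $c_{\lambda,\mu}^->0$ but from the fibering-map uniqueness of Lemma~\ref{firb2}: if $(u_1,v_1)$ were in $\mathcal{N}_{\lambda,\mu}^-$, then $t_2=1$ and the local minimum at $t_1<1$ would produce a point of $\mathcal{N}_{\lambda,\mu}$ with energy strictly below $c_{\lambda,\mu}$, a contradiction. If you want your write-up to match the stated hypothesis, you should replace the appeal to Lemma~\ref{pscond} and to Lemma~\ref{lema}(ii) by these two arguments.
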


\begin{proof}
By (i) of Proposition~\ref{Pss}, there is a bounded minimizing sequence $\{(u_k,v_k)\}\subset \mathcal{N}_{\lambda,\mu}$ such that
\begin{equation*}
\lim\limits_{k\to\infty}J_{\lambda,\mu}(u_k,v_k)=c_{\lambda,\mu}\leq c_{\lambda,\mu}^+<0, \qquad J'_{\lambda,\mu}(u_k,v_k)=o_k(1)\ \ \mbox{in}\ \ E^\ast.
\end{equation*}
Then there exists $(u_1,v_1)\in E$ such that, up to a subsequence, 
$u_k\rightharpoonup u_1$, $v_k\rightharpoonup v_1$ in $X_0$ as well as 
$u_k\to u_1$ and $ v_k\to v_1$ strongly in $L^r(\Omega)$ for any $1\leq  r<p^\ast$.
Then, the Dominated Convergence Theorem yields
	$$
	\int_\Omega(\lambda|u_k|^{q}+\mu|v_k|^q)dx\to \int_\Omega(\lambda|u_1|^{q}+\mu|v_1|^q)dx,\quad \mbox{as}\ \ k\to\infty.
	$$
	It is easy to get that $(u_1,v_1)$ is a weak solution of \eqref{frac1}, cf.\ Lemma~\ref{ps2}.
	Now, since $(u_k,v_k)\in \mathcal{N}_{\lambda,\mu}$, we have
	\begin{align*}
		J_{\lambda,\mu}(u_k,v_k)&=\frac{\alpha+\beta-p}{p(\alpha+\beta)}\|(u_k,v_k)\|^p
		-\frac{\alpha+\beta-q}{q(\alpha+\beta)}\int_\Omega (\lambda |u_k|^{q}+\mu |v_k|^{q})dx\nonumber\\
		&\geq -\frac{\alpha+\beta-q}{q(\alpha+\beta)}\int_\Omega (\lambda |u_k|^{q}+\mu |v_k|^{q})dx.
	\end{align*}
	Then, from $c_{\lambda,\mu}<0$, we get
	$$
	\int_\Omega (\lambda |u_1|^{q}+\mu |v_1|^{q})dx\geq -\frac{q(\alpha+\beta)}{\alpha+\beta-q}c_{\lambda,\mu}>0
	$$
	Therefore, 
	$(u_1,v_1)\in \mathcal{N}_{\lambda,\mu}$ is a nontrivial solution of \eqref{frac1}.
	Next, we show that $(u_k,v_k)\to (u_1,v_1)$ strongly in $E$ and $J_{\lambda,\mu}(u_1,v_1)=c_{\lambda,\mu}^+$. In fact, since  
	$(u_1,v_1)\in \mathcal{N}_{\lambda,\mu}$, in light of Fatou's Lemma we get
	\begin{align*}
		c_{\lambda,\mu}\leq J_{\lambda,\mu}(u_1,v_1)&= \frac{\alpha+\beta-p}{p(\alpha+\beta)}\|(u_1,v_1)\|^p
		-\frac{\alpha+\beta-q}{q(\alpha+\beta)}\int_\Omega (\lambda |u_1|^{q}+\mu |v_1|^{q})dx\nonumber\\
		&\leq \liminf\limits_{k\to\infty}\left(\frac{\alpha+\beta-p}{p(\alpha+\beta)}\|(u_k,v_k)\|^p
		-\frac{\alpha+\beta-q}{q(\alpha+\beta)}\int_\Omega (\lambda |u_k|^{q}+\mu |v_k|^{q})dx\right)\nonumber\\
		&= \liminf\limits_{k\to\infty}J_{\lambda,\mu}(u_k,v_k)=c_{\lambda,\mu}.
	\end{align*}
	This implies that $J_{\lambda,\mu}(u_1,v_1)=c_{\lambda,\mu}$ and $\|(u_k,v_k)\|^p\to \|(u_1,v_1)\|^p$. 
	We also have
	$$
	\|(u_k-u_1,v_k-v_1)\|^p=\|(u_k,v_k)\|^p-\|(u_1,v_1)\|^p+o_k(1).
	$$
	Therefore $(u_k,v_k)\to (u_1,v_1)$ strongly in $E$. We claim that $(u_1,v_1)\in \mathcal{N}_{\lambda,\mu}^+$, 
	which yields $c_{\lambda,\mu}=c_{\lambda,\mu}^+$. Assume by contradiction that  $(u_1,v_1)\in \mathcal{N}_{\lambda,\mu}^-$. By Lemma \ref{firb2}, there exist unique $t_2>t_1>0$ such that 
	$$
	(t_1u_1,t_1v_1)\in\mathcal{N}_{\lambda,\mu}^+,\qquad 
	(t_2u_1,t_2v_1)\in\mathcal{N}_{\lambda,\mu}^-. 
	$$
	In particular, we have $t_1<t_2=1$. Since 
	$$
	\frac{d}{dt}J_{\lambda,\mu}(t_1u_1,t_1v_1)=0,\quad \frac{d^2}{dt^2}J_{\lambda,\mu}(t_1u_1,t_1v_1)>0,
	$$
	there exists $t^\ast\in(t_1,1]$ such that $J_{\lambda,\mu}(t_1u_1,t_1v_1)<J_{\lambda,\mu}(t^\ast u_1,t^\ast v_1)$.
	Then
	$$
	c_{\lambda,\mu}\leq J_{\lambda,\mu}(t_1u_1,t_1v_1)< J_{\lambda,\mu}(t^\ast u_1,t^\ast v_1)\leq J_{\lambda,\mu}( u_1,v_1)=c_{\lambda,\mu}
	$$
	which is a contradiction. Hence $(u_1,v_1)\in \mathcal{N}_{\lambda,\mu}^+$.
\end{proof}

\subsection{The second solution}
We next establish the existence of a minimum for $J_{\lambda,\mu}|_{\mathcal{N}_{\lambda,\mu}^-}$.

Let $S$ be as in (\ref{criticalfrac}). From  \cite{brasco}, we know that for $1<p<\infty$, $s\in(0,1)$, $n>ps$, there exists a minimizer for $S$, and for every minimizer $U$, there exist $x_0\in\mathbb{R}^n$ and a constant sign monotone function $u:\mathbb{R}\to\mathbb{R}$ such that $U(x)=u(|x-x_0|)$. In the following, we shall fix a radially symmetric nonnegative decreasing minimizer $U = U (r)$ for $S$. Multiplying $U$ by a positive constant if necessary, we may assume that
\begin{eqnarray}\label{careq}
(-\Delta)_p^sU=U^{p_s^\ast-1}\quad \mbox{in} \ \mathbb{R}^n.
\end{eqnarray}
For any $\eps>0$, we note that the function
\begin{equation*}
U_\eps(x)=\frac{1}{\eps^{\frac{n-ps}{p}}}U\left(\frac{|x|}{\eps}\right)
\end{equation*}
is also a minimizer for $S$ satisfying \eqref{careq}. In \cite{brasco}, the following asymptotic estimates for $U$ was provided.

\begin{lemma}[Optimal decay]
	\label{leesti}
There exist $c_1,c_2>0$ and $\theta>1$ such that for all $r>1$, 
$$
\frac{c_1}{r^{{\frac{n-ps}{p-1}}}}\leq U(r)\leq \frac{c_2}{r^{{\frac{n-ps}{p-1}}}},\qquad
\frac{U(\theta r)}{U(r)}\leq \frac{1}{2}.
$$
\end{lemma}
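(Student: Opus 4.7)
The decay exponent $(n-ps)/(p-1)$ is the natural scale of the fractional $p$-Laplacian: up to a positive multiplicative constant, the profile $\Phi(x) = |x|^{-(n-ps)/(p-1)}$ is an $s$-$p$-harmonic function on $\mathbb{R}^n \setminus \{0\}$ and plays the role of a nonlocal fundamental solution (for $p=2$ it coincides, up to constants, with the Riesz kernel $|x|^{-(n-2s)}$). The strategy is to use this profile both as a target decay rate and as a building block for barriers. As a preliminary step, I would record that any minimizer $U$ is bounded, continuous, positive, radial and strictly decreasing in $r = |x|$: boundedness and continuity follow from the regularity theory for the fractional $p$-Laplacian applied to $(-\Delta)_p^s U = U^{p_s^\ast-1}$ combined with $U \in L^{p_s^\ast}(\mathbb{R}^n)$, while radial symmetry and monotonicity come from a Schwarz-rearrangement argument on the Rayleigh quotient in \eqref{criticalfrac}. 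These properties also ensure that $U^{p_s^\ast-1}$ is bounded and has fast radial decay, so that the source of the equation is essentially concentrated near the origin.

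The heart of the matter is the upper bound $U(r) \leq c_2\, r^{-(n-ps)/(p-1)}$ for $r > 1$, which I would obtain through a nonlinear barrier argument. The idea is to dominate $U$ by a superharmonic-type profile built from $\Phi$: one chooses a suitable modification of $M\Phi$ which, combined with the positive source $U^{p_s^\ast-1}$ in an exterior domain $\{|x| > R_0\}$, yields a genuine supersolution of the Euler--Lagrange equation; the weak comparison principle for $(-\Delta)_p^s$, whose monotonicity structure forces $U\leq$ supersolution as soon as the boundary and nonlocal tail conditions are arranged, then gives the bound. The lower bound is obtained symmetrically via a subsolution of the form $m\Phi$ suitably modified, using the positivity of $U^{p_s^\ast-1}$ and the continuity and strict positivity of $U$ on the sphere $\{|x|=R_0\}$.

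Finally, the ratio estimate is immediate from the two-sided bound: for $r > 1$,
$$
\frac{U(\theta r)}{U(r)} \leq \frac{c_2}{c_1}\,\theta^{-(n-ps)/(p-1)},
$$
so any $\theta > 1$ with $(c_2/c_1)\,\theta^{-(n-ps)/(p-1)} \leq 1/2$ works. The principal obstacle is the construction of the nonlocal barriers and the accompanying comparison principle on exterior domains in the nonlinear setting: for $p \neq 2$ no explicit classification of minimizers is available, so one cannot proceed via an exact Riesz-type convolution representation, and the argument must rely on the fine potential-theoretic and Moser-type estimates for $(-\Delta)_p^s$ developed in \cite{brasco}, which handle the nonlocal tails and the nonlinear structure of the operator simultaneously.
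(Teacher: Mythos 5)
The paper does not prove this lemma at all: it is imported verbatim from \cite{brasco} (Brasco--Mosconi--Squassina), and the surrounding text only records that minimizers exist, are radial and monotone, and satisfy \eqref{careq} after normalization. So there is no internal proof to compare against; the relevant comparison is with the argument of \cite{brasco}. Your plan does track that argument at the level of strategy: the profile $\Phi(x)=|x|^{-(n-ps)/(p-1)}$ as the $(s,p)$-harmonic model in $\mathbb{R}^n\setminus\{0\}$, two-sided barriers plus a nonlocal comparison principle on exterior domains, and the reduction of the ratio estimate $U(\theta r)/U(r)\leq\tfrac12$ to the two-sided bound by taking $\theta$ with $(c_2/c_1)\theta^{-(n-ps)/(p-1)}\leq\tfrac12$. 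That last reduction is the only step you actually complete, and it is correct.

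As a proof, however, the proposal has genuine gaps, all concentrated where you write ``one chooses a suitable modification.'' First, the claim that $\Phi$ is $(s,p)$-harmonic away from the origin is itself a nontrivial computation for $p\neq 2$ (there is no convolution representation), and it is one of the key lemmas of \cite{brasco}, not a fact you can assume. Second, the directions of the comparison matter: since $(-\Delta)_p^s U=U^{p_s^\ast-1}>0$, the function $U$ sits on the supersolution side, which makes the \emph{lower} bound the soft one (compare with a truncation of $m\Phi$, which is a subsolution of the homogeneous equation); for the \emph{upper} bound you must build a supersolution of the inhomogeneous equation dominating the source $U^{p_s^\ast-1}$, and this cannot be closed without a preliminary, suboptimal decay estimate on $U$ (e.g.\ $U(r)\lesssim r^{-(n-ps)/p}$, obtained from radial monotonicity and $U\in L^{p_s^\ast}$) to control the right-hand side and start the bootstrap. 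You do not mention this intermediate step, and without it the barrier argument does not start. Third, truncating $\Phi$ near the origin changes its nonlocal image, so verifying the sub/supersolution inequalities requires estimating the nonlocal tails explicitly. These are exactly the technical contents of \cite{brasco}, which is why the paper cites the result rather than proving it; your text correctly identifies where the difficulty lies but defers all of it, so it should be regarded as a roadmap to the cited proof rather than a proof.
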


\noindent
Assume, without loss of generality, that $0\in\Omega$. For $\eps, \delta>0$, let
$$
m_{\eps,\delta}=\frac{U_\eps(\delta)}{U_\eps(\delta)-U_\eps(\theta\delta)},
$$
let
\begin{eqnarray*}
  g_{\eps,\delta}(t)= 
  \left\{ \arraycolsep=1.5pt
\begin{array}{lll}
0,\ \  \ &
{\rm if}\ 0\leq t\leq U_\eps(\theta\delta);\\[2mm]
m_{\eps,\delta}^p(t-U_\eps(\theta\delta)),\ \  \ &
{\rm if}\  U_\eps(\theta\delta)\leq t\leq U_\eps(\delta);\\[2mm]
t+U_\eps(\delta)(m_{\eps,\delta}^{p-1}-1), \ \ \quad & {\rm if}\ t\geq U_\eps(\delta),
\end{array}
\right.
\end{eqnarray*}
and
\begin{eqnarray*}
G_{\eps,\delta}(t)=\int_0^t  g'_{\eps,\delta}(\tau)^{\frac{1}{p}}d\tau= 
  \left\{ \arraycolsep=1.5pt
\begin{array}{lll}
0,\ \  \ &
{\rm if}\ 0\leq t\leq U_\eps(\theta\delta);\\[2mm]
m_{\eps,\delta} (t-U_\eps(\theta\delta)),\ \  \ &
{\rm if}\   U_\eps(\theta\delta)\leq t\leq U_\eps(\delta);\\[2mm]
t , \ \ \quad & {\rm if}\ t\geq U_\eps(\delta).
\end{array}
\right.
\end{eqnarray*}
The functions $g_{\eps,\delta}$ and $G_{\eps,\delta}$ are nondecreasing and absolutely continuous. Consider the radially symmetric nonincreasing function
\begin{eqnarray}\label{uepsdeltadef}
u_{\eps,\delta}(r)=  G_{\eps,\delta}(U_\eps(r)),
\end{eqnarray}
which satisfies
\begin{eqnarray*}
u_{\eps,\delta}(r)=  \left\{ \arraycolsep=1.5pt
\begin{array}{ll}
U_\eps(r),\ \  \ &
{\rm if}\ r\leq   \delta,\\[1mm]
0,\ \  \ &{\rm if}\   r\geq\theta\delta.
\end{array}
\right.
\end{eqnarray*}

\noindent
We have the following estimates for $u_{\eps,\delta}$, which were proved in \cite[Lemma 2.7]{mpsy}.
\begin{lemma}[Norm estimates]
	\label{lebn}
There exists a constant $C=C(n,p,s)>0$ such that for any $0<\eps\leq\frac{\delta}{2} $, then the following estimates hold.
\begin{eqnarray*}
\int_{\mathbb{R}^{2n}}\frac{|u_{\eps,\delta}(x)-u_{\eps,\delta}(y)|^p}{|x-y|^{n+ps}}dxdy\leq S^{\frac{n}{ps}}+\O\big(\big(\frac{\eps}{\delta}\big)^{\frac{n-ps}{p-1}}\big),
\end{eqnarray*}
and
\begin{eqnarray*}
\int_{\mathbb{R}^n}|u_{\eps,\delta}(x)|^{p_s^\ast}dx\geq S^{\frac{n}{ps}}-C\big(\big(\frac{\eps}{\delta}\big)^{\frac{n}{p-1}}\big).
\end{eqnarray*}
\end{lemma}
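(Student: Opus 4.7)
The plan is to treat the two estimates separately, handling the $L^{p_s^\ast}$ lower bound by a direct truncation argument and the Gagliardo seminorm upper bound by the standard $(g')^{1/p}$-composition trick combined with the Euler--Lagrange equation \eqref{careq}.

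\textbf{Step 1 ($L^{p_s^\ast}$ lower bound).} Since $u_{\eps,\delta}\equiv U_\eps$ on $B_\delta$, I would split
\[
\int_{\mathbb{R}^n}|u_{\eps,\delta}|^{p_s^\ast}dx\geq \int_{B_\delta}|U_\eps|^{p_s^\ast}dx=\int_{\mathbb{R}^n}|U_\eps|^{p_s^\ast}dx-\int_{\mathbb{R}^n\setminus B_\delta}|U_\eps|^{p_s^\ast}dx.
\]
The first integral equals $S^{n/(ps)}$ by the normalization of $U$ (and scale invariance). For the tail, I would use the decay in Lemma~\ref{leesti}: for $|x|\geq \delta\geq 2\eps$,
$U_\eps(x)\leq c_2\eps^{(n-ps)/(p(p-1))}|x|^{-(n-ps)/(p-1)}$.
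Raising to the power $p_s^\ast$ gives the pointwise bound $U_\eps^{p_s^\ast}(x)\leq C\,\eps^{n/(p-1)}|x|^{-np/(p-1)}$, and since $np/(p-1)>n$ the radial integral over $|x|>\delta$ converges and equals $C\,(\eps/\delta)^{n/(p-1)}$, which yields the desired estimate.

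\textbf{Step 2 (pointwise composition inequality).} For the seminorm, I would exploit that $G_{\eps,\delta}'(t)=(g_{\eps,\delta}'(t))^{1/p}$, so for $a\leq b$,
\[
\bigl(G_{\eps,\delta}(b)-G_{\eps,\delta}(a)\bigr)^p=\Bigl(\int_a^b g_{\eps,\delta}'(t)^{1/p}\,dt\Bigr)^p\leq(b-a)^{p-1}\bigl(g_{\eps,\delta}(b)-g_{\eps,\delta}(a)\bigr)
\]
by H\"older's inequality. Applied with $a,b$ the values of $U_\eps$ at two points (and using monotonicity of $g_{\eps,\delta}$), this yields
\[
|u_{\eps,\delta}(x)-u_{\eps,\delta}(y)|^p\leq |U_\eps(x)-U_\eps(y)|^{p-2}\bigl(U_\eps(x)-U_\eps(y)\bigr)\bigl(g_{\eps,\delta}(U_\eps(x))-g_{\eps,\delta}(U_\eps(y))\bigr).
\]
Integrating against $|x-y|^{-(n+ps)}$ and using $g_{\eps,\delta}(U_\eps)\in X_0$ as a test function in \eqref{careq}, the double integral collapses to $\int_{\mathbb R^n}U_\eps^{p_s^\ast-1}g_{\eps,\delta}(U_\eps)\,dx$.

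\textbf{Step 3 (estimating the single integral).} I would split this integral according to the three pieces in the definition of $g_{\eps,\delta}$, namely $B_\delta$, $B_{\theta\delta}\setminus B_\delta$ and $\mathbb{R}^n\setminus B_{\theta\delta}$ (which contributes zero). In $B_\delta$ one gets exactly $\int_{B_\delta}U_\eps^{p_s^\ast}dx$ plus a lower-order term involving $U_\eps(\delta)(m_{\eps,\delta}^{p-1}-1)\int_{B_\delta}U_\eps^{p_s^\ast-1}dx$; the first piece combines with the tail bound from Step~1 to give $S^{n/(ps)}+O((\eps/\delta)^{n/(p-1)})$. The crucial observation, which uses the second part of Lemma~\ref{leesti}, is that $m_{\eps,\delta}\leq 2$ uniformly in $\eps,\delta$ when $\eps\leq\delta/2$ (because then $\delta/\eps>1$, so $U_\eps(\theta\delta)/U_\eps(\delta)=U(\theta\delta/\eps)/U(\delta/\eps)\leq 1/2$), hence $m_{\eps,\delta}^{p-1}-1$ is bounded. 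The remaining contributions are then controlled by evaluating $U_\eps(\delta)$ and $\int_{B_{\theta\delta}\setminus B_\delta}U_\eps^{p_s^\ast-1}\,dx$ via the decay estimate; a direct computation shows each is $O((\eps/\delta)^{(n-ps)/(p-1)})$, which dominates the $(\eps/\delta)^{n/(p-1)}$ error of Step~1.

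\textbf{Main obstacle.} The cleanest part is Step~1; Step~2 is the standard Brasco--Mosconi--Squassina composition trick. The technical bookkeeping is concentrated in Step~3: one must verify that every remainder produced by the three regions fits under $\O((\eps/\delta)^{(n-ps)/(p-1)})$, and the uniform bound $m_{\eps,\delta}\leq 2$ (which is the whole reason $\theta$ was chosen as in Lemma~\ref{leesti}) is the delicate point that makes the cross-terms involving $m_{\eps,\delta}^{p-1}-1$ harmless.
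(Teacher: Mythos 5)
Your argument is correct and is essentially the proof of \cite[Lemma 2.7]{mpsy}, which the paper simply cites without reproducing: the $L^{p_s^\ast}$ lower bound via the tail of $U_\eps$ outside $B_\delta$, the H\"older/composition inequality $|G_{\eps,\delta}(b)-G_{\eps,\delta}(a)|^p\leq|b-a|^{p-1}|g_{\eps,\delta}(b)-g_{\eps,\delta}(a)|$ followed by testing \eqref{careq} with $g_{\eps,\delta}(U_\eps)$, and the uniform bound $m_{\eps,\delta}\leq 2$ from the second half of Lemma \ref{leesti} are exactly the ingredients used there. The only cosmetic remark is that in Step 3 one can bound $\int_{B_\delta}U_\eps^{p_s^\ast}\,dx\leq S^{n/(ps)}$ directly, without re-invoking the tail estimate of Step 1.
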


\noindent
Next, an important technical lemma. This is the only point where
we use conditions \eqref{conditions-main} on $p,s,q,n$.

\begin{lemma}[$c_{\lambda,\mu}^-<c_\infty$]
	\label{psc}
Assume that \eqref{conditions-main} hold.
	Then there exists $\Lambda_2>0$ such that, for  
	$$
	0<\lambda^{\frac{p}{p-q}}+\mu^{\frac{p}{p-q}}<\Lambda_2,
	$$ 
	there exists $(u,v)\in E\backslash\{(0,0)\}$ with $u\geq 0,v\geq 0$, such that
	\begin{equation*}
	\label{psc1}
		\sup\limits_{t\geq0}J_{\lambda,\mu}(tu,tv)<c_\infty,
	\end{equation*}
	where $c_\infty$ is the constant given in \eqref{pscv}. In particular $c_{\lambda,\mu}^-<c_\infty$, for all $0<\lambda^{\frac{p}{p-q}}+\mu^{\frac{p}{p-q}}<\Lambda_2$.
\end{lemma}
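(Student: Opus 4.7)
The approach is Br\'ezis-Nirenberg style: test $\sup_{t\geq 0}J_{\lambda,\mu}(tu,tv)$ along a product pair built from the truncated minimizer $u_{\eps,\delta}$ of \eqref{uepsdeltadef}, exploiting the algebraic link between $S_{\alpha,\beta}$ and $S$ from \eqref{alphabeta}. I would fix $\delta>0$ with $B_{\theta\delta}(0)\subset\Omega$ and pick $A,B>0$ with $A/B=(\alpha/\beta)^{1/p}$, so that
$$
\frac{A^p+B^p}{(A^\alpha B^\beta)^{p/p_s^\ast}}=\left(\frac{\alpha}{\beta}\right)^{\beta/p_s^\ast}+\left(\frac{\beta}{\alpha}\right)^{\alpha/p_s^\ast}=\frac{S_{\alpha,\beta}}{S},
$$
and take $(u,v):=(Au_{\eps,\delta},Bu_{\eps,\delta})\in E\setminus\{(0,0)\}$, nonnegative and nontrivial in both components.

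Setting $a:=(A^p+B^p)\|u_{\eps,\delta}\|_{X_0}^p$, $b:=2A^\alpha B^\beta\int_\Omega u_{\eps,\delta}^{p_s^\ast}\,dx$ and $c:=(\lambda A^q+\mu B^q)\int_\Omega u_{\eps,\delta}^q\,dx$, one has $J_{\lambda,\mu}(tu,tv)=\frac{t^p}{p}a-\frac{t^{p_s^\ast}}{p_s^\ast}b-\frac{t^q}{q}c$. A one-line maximization of the two leading terms gives $\sup_{t\geq 0}\bigl(\tfrac{t^p}{p}a-\tfrac{t^{p_s^\ast}}{p_s^\ast}b\bigr)=\tfrac{s}{n}\,a^{n/(ps)}/b^{(n-ps)/(ps)}$, and plugging in $a,b$ together with Lemma~\ref{lebn} yields
$$
\sup_{t\geq 0}\Big(\frac{t^p}{p}a-\frac{t^{p_s^\ast}}{p_s^\ast}b\Big)\leq \frac{2s}{n}\Big(\frac{S_{\alpha,\beta}}{2}\Big)^{n/(ps)}+\O\bigl((\eps/\delta)^{(n-ps)/(p-1)}\bigr).
$$
To promote this to a bound on $\sup_{t\geq 0}J_{\lambda,\mu}(tu,tv)$, I would split the fibre at a threshold $t_0>0$: for $t\in[0,t_0]$ the estimate $J_{\lambda,\mu}(tu,tv)\leq t_0^p a/p$ is below $c_\infty$ once $t_0$ is small, while for $t\geq t_0$ the concave term can be retained, so that matters reduce to proving
$$
\frac{t_0^q}{q}\,c>C_0\Lambda+\O\bigl(\eps^{(n-ps)/(p-1)}\bigr),\qquad \Lambda:=\lambda^{\frac{p}{p-q}}+\mu^{\frac{p}{p-q}}.
$$

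The decisive ingredient is a sharp lower bound for $c$. Using the tail decay of $U$ from Lemma~\ref{leesti} and the definition of $u_{\eps,\delta}$, under the assumption $q\geq n(p-1)/(n-ps)$ one obtains $\int_\Omega u_{\eps,\delta}^q\,dx\geq C\,\eps^{\,n-q(n-ps)/p}$, with an extra $\log(\delta/\eps)$ factor at equality. Writing $a_1:=(n-ps)/(p-1)$ and $a_2:=n-q(n-ps)/p$, the heart of the matter is the strict inequality $a_2<a_1$: elementary arithmetic at the borderline $q=n(p-1)/(n-ps)$ reduces it to $n>p^2s$, precisely the first clause of \eqref{conditions-main}; the complementary bound $n<ps/(2-p)$ for $p<2$ is invoked only to justify the refined Br\'ezis-Lieb expansion behind Lemma~\ref{lebn} in the nonsmooth regime. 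Choosing $\eps=\Lambda^\gamma$ with $\gamma$ in the nonempty interval $(1/a_1,\,q/(pa_2))$ and $t_0=\eta\,\Lambda^{1/p}$ with $\eta$ small, every term becomes a power of $\Lambda$ and the desired inequality holds for $\Lambda<\Lambda_2$, where $\Lambda_2>0$ depends only on $p,q,s,n,|\Omega|$. Once this is established, Lemma~\ref{firb2} furnishes $t_2>0$ with $(t_2u,t_2v)\in\mathcal{N}_{\lambda,\mu}^-$, so that $c_{\lambda,\mu}^-\leq J_{\lambda,\mu}(t_2u,t_2v)\leq \sup_{t\geq 0}J_{\lambda,\mu}(tu,tv)<c_\infty$.

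The step I expect to be most delicate is the borderline value $q=n(p-1)/(n-ps)$, where the $\log(\delta/\eps)$ factor enters and the margin $a_1-a_2$ closes; it must be absorbed carefully into the choices of $\eps$ and $t_0$ so that $\Lambda_2$ stays bounded away from zero and uniform in $\lambda,\mu$.
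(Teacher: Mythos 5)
Your route is the same as the paper's: the test pair $(A u_{\eps,\delta},B u_{\eps,\delta})$ with $A/B=(\alpha/\beta)^{1/p}$ is exactly the paper's $(\alpha^{1/p}u_{\eps,\delta},\beta^{1/p}u_{\eps,\delta})$, the maximization of the two leading terms, the use of Lemma \ref{lebn} together with \eqref{alphabeta}, the lower bound on $\int_\Omega u_{\eps,\delta}^q\,dx$, and the arithmetic identifying $q\geq n(p-1)/(n-ps)$ and $n>p^2s$ as the operative conditions all match. However, there is one concrete step that fails as written: the choice $t_0=\eta\,\Lambda^{1/p}$. With $c=(\lambda A^q+\mu B^q)\int_\Omega u_{\eps,\delta}^q\,dx$ one has $\lambda A^q+\mu B^q\lesssim\Lambda^{(p-q)/p}$ (and no better bound from below in general), so with $\eps=\Lambda^{\gamma}$ your gain is
\begin{equation*}
\frac{t_0^q}{q}\,c \;\simeq\; \Lambda^{q/p}\cdot\Lambda^{(p-q)/p}\cdot\Lambda^{\gamma a_2}\;=\;\Lambda^{1+\gamma a_2}\;=\;o(\Lambda),
\end{equation*}
which can never dominate the loss $C_0\Lambda$ in $c_\infty$, no matter how $\gamma$ is chosen. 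The extra factor $\Lambda^{q/p}$ coming from $t_0$ is what kills the inequality.

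The repair is immediate and is what the paper does: take $t_0\in(0,1)$ \emph{fixed}, independent of $\lambda,\mu$. This is legitimate because $c_\infty\geq \tfrac{2s}{n}(S_{\alpha,\beta}/2)^{n/(ps)}-C_0\delta_1>0$ is bounded below by a positive constant once $\Lambda<\delta_1$, while $J_{\lambda,\mu}(tu,tv)\leq Ct^p$ handles $0\leq t\leq t_0$. With $t_0$ fixed the gain becomes $\Lambda^{(p-q)/p+\gamma a_2}$, and your exponent window $\gamma\in(1/a_1,\,q/(p a_2))$ is then exactly the right condition (its nonemptiness is equivalent to $q>n(p-1)/(n-ps)$); the paper instead takes the endpoint $\gamma=1/a_1$, i.e.\ $\eps=\Lambda^{(p-1)/(n-ps)}$, which also covers the borderline $q=n(p-1)/(n-ps)$ via the logarithmic factor, a case your window excludes since it degenerates to a point there. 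Aside from this $t_0$ issue (and the unproved borderline case, which you flag but do not carry out), the argument is the paper's.
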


\begin{proof}
	Write $J_{\lambda,\mu}(u,v)=J(u,v)-K(u,v)$
	where the functions
	$J: E\to\mathbb{R}$ and $K: E\to\mathbb{R}$ are defined by
	\begin{eqnarray*}\label{funj}
		J(u,v)=\frac{1}{p}\|(u,v)\|^p-\frac{2}{\alpha+\beta}\int_\Omega|u|^\alpha|v|^\beta dx,
		\qquad
		K(u,v)= \frac{1}{q}\int_\Omega(\lambda|u|^q+\mu|v|^q)dx.
	\end{eqnarray*}
	Set $u_0:=  \alpha^{\frac{1}{p}} u_{\eps,\delta}$, $v_0:= \beta^{\frac{1}{p}} u_{\eps,\delta}$, where $u_{\eps,\delta}$ is defined by (\ref{uepsdeltadef}). 
	The map $h(t):=J(tu_0,tv_0)$ satisfies $h(0)=0$, $h(t)>0$ for $t>0$ small and $h(t)<0$ for $t>0$ large. Moreover, $h$ maximizes at point  
	$$
	t_\ast:=\Bigg(\frac{\|(u_0,v_0)\|^p}{2\displaystyle\int_\Omega|u_0|^\alpha|v_0|^\beta dx}\Bigg)^{\frac{1}{\alpha+\beta-p}}.
	$$
	Thus, we have
	\begin{align*} 
		\sup\limits_{t\geq0}J(tu_0,tv_0)&=h(t_\ast)=\frac{t_\ast^p}{p}\|(u_0,v_0)\|^p-\frac{2t_\ast^{\alpha+\beta}}{\alpha+\beta}\int_\Omega|u_0|^\alpha|v_0|^\beta dx\nonumber\\
		&=\left(\frac{1}{p}-\frac{1}{\alpha+\beta}\right)\frac{\|(u_0,v_0)\|^{\frac{p(\alpha+\beta)}{\alpha+\beta-p}}}{\left(2\displaystyle\int_\Omega|u_0|^\alpha|v_0|^\beta dx\right)^{\frac{p}{\alpha+\beta-p}}} \nonumber\\
		&=\left(\frac{1}{p}-\frac{1}{\alpha+\beta}\right)\frac{(\alpha+\beta)^{\frac{\alpha+\beta}{\alpha+\beta-p}}}{2^{\frac{p}{\alpha+\beta-p}}
			\alpha^{\frac{\alpha}{\alpha+\beta-p}}\beta^{\frac{\beta}{\alpha+\beta-p}}}\frac{\|u_{\eps,\delta}\|_{X_0}^{\frac{p(\alpha+\beta)}{\alpha+\beta-p}}}{\Big( \displaystyle\int_\Omega|u_{\eps,\delta}|^{\alpha+\beta} dx\Big)^{\frac{p}{\alpha+\beta-p}}} \nonumber\\
		&=\frac{s}{n}\frac{1}{2^{\frac{n-ps}{ps}}}
		\left[\Big(\frac{\alpha}{\beta}\Big)^{\frac{\beta}{\alpha+\beta}}
		+\Big(\frac{\beta}{\alpha}\Big)^{\frac{\alpha}{\alpha+\beta}}\right]^{\frac{n}{ps}}
		\Bigg[\frac{\|u_{\eps,\delta}\|_{X_0}^{p}}{\Big( \displaystyle\int_\Omega|u_{\eps,\delta}|^{p_s^\ast} dx\Big)^{\frac{p}{p_s^\ast}}}\Bigg]^{\frac{n}{ps}} .
	\end{align*}
	From Lemma \ref{lebn} and \eqref{alphabeta}, we have
	\begin{align}\label{funjasy}
		\sup\limits_{t\geq0}J(tu_0,tv_0)&\leq 
		\frac{s}{n}\frac{1}{2^{\frac{n-ps}{ps}}}
		\left[\Big(\frac{\alpha}{\beta}\Big)^{\frac{\beta}{\alpha+\beta}}
		+\Big(\frac{\beta}{\alpha}\Big)^{\frac{\alpha}{\alpha+\beta}}\right]^{\frac{n}{ps}}
		\Bigg[\frac{S^{\frac{n}{ps}}+\O\big(\big(\frac{\eps}{\delta}\big)^{\frac{n-ps}{p-1}}\big)}
		{\big( S^{\frac{n}{ps}}-C\big(\big(\frac{\eps}{\delta}\big)^{\frac{n}{p-1}}\big)\big)^{\frac{p}{p_s^\ast}}}\Bigg]^{\frac{n}{ps}} \nonumber\\
		&\leq \frac{2s}{n}\Big(\frac{S_{\alpha,\beta}}{2}\Big)^{\frac{n}{ps}}+\O\Big(\Big(\frac{\eps}{\delta}\Big)^{\frac{n-ps}{p-1}}\Big).
	\end{align}
	Let $\delta_{1}>0$ be such that for all $0<\lambda^{\frac{p}{p-q}}+\mu^{\frac{p}{p-q}}<\delta_1$,
	$$
	c_\infty=\frac{2s}{n}\Big(\frac{S_{\alpha,\beta}}{2}\Big)^{\frac{n}{ps}}
	-C_0\left(\lambda^{\frac{p}{p-q}}+\mu^{\frac{p}{p-q}}\right)>0.
	$$
	We have
	$J_{\lambda,\mu}(tu_0,tv_0)\leq \frac{t^p}{p}\|(u_0,v_0)\|^p\leq Ct^p$ for $t\geq 0$ and $\lambda,\mu>0.$
	Thus, there exists $t_0\in(0,1)$ such that
	$$
	\sup\limits_{0\leq t\leq t_0}J_{\lambda,\mu}(tu_0,tv_0)<c_\infty,\qquad \mbox{for\ all}\ 0<\lambda^{\frac{p}{p-q}}+\mu^{\frac{p}{p-q}}<\delta_1.
	$$
	Since $\alpha,\beta>1$,   it follows from \eqref{uepsdeltadef} and \eqref{funjasy} that
	\begin{align*}
		\sup\limits_{t\geq t_0}J_{\lambda,\mu}(tu_0,tv_0) &= \sup\limits_{t\geq t_0}\left[J(tu_0,tv_0)-K(tu_0,tv_0)\right]  \\
		&\leq \frac{2s}{n}\left(\frac{S_{\alpha,\beta}}{2}\right)^{\frac{n}{ps}}+\O\left(\left(\frac{\eps}{\delta}\right)^{\frac{n-ps}{p-1}}\right)-\frac{t_0^q}{q}
		\left(\lambda\alpha^{\frac{q}{p}}+\mu\beta^{\frac{q}{p}}\right)\int_{B(0,\delta)}|u_{\eps,\delta}|^qdx   \\
		&\leq \frac{2s}{n}\left(\frac{S_{\alpha,\beta}}{2}\right)^{\frac{n}{ps}}+\O\left(\left(\frac{\eps}{\delta}\right)^{\frac{n-ps}{p-1}}\right)-\frac{t_0^q}{q}
		\left(\lambda +\mu \right)\int_{B(0,\delta)}|u_{\eps,\delta}|^qdx.
	\end{align*}
	Fix now $\delta>0$ sufficiently small that $B_{\theta\delta}(0)\Subset\Omega$ (we assume without loss of generality that $0\in\Omega$), so that ${\rm supp}(u_{\epsilon,\delta})\subset\Omega$, according
	to formula \eqref{uepsdeltadef}. By means of Lemma \ref{leesti}, for any $0<\eps\leq \frac{\delta}{2}$, we have
	\begin{align*}
		\int_{B(0,\delta)}|u_{\eps,\delta}(x)|^qdx&=\int_{B(0, \delta )}|U_{\eps}(x)|^qdx \\
		&= \eps^{n-\frac{n-ps}{p}q}\int_{B(0,\frac{\delta}{\eps})}|U(x)|^qdx  
		\geq \eps^{n-\frac{n-ps}{p}q}\omega_{n-1}\int_1^{\frac{\delta}{\eps}}U(r)^qr^{n-1}dr \\
		&\geq \eps^{n-\frac{n-ps}{p}q}\omega_{n-1}c_1^q\int_1^{\frac{\delta}{\eps}}r^{n-\frac{n-ps}{p-1}q-1}dr 
		\simeq C
			\begin{cases}
				 \eps^{n-\frac{n-ps}{p}q},\ \  \ &
				\text {if $q>\frac{n(p-1)}{n-ps}$,} \\[1mm]
				 \eps^{n-\frac{n-ps}{p}q}|\log\eps|,  & \text{if  $q=\frac{n(p-1)}{n-ps}$,} \\[1mm]
				 \eps^{\frac{(n-ps)q}{p(p-1)}},  & \text{if  $q<\frac{n(p-1)}{n-ps}$.}
			\end{cases}
	\end{align*}
Therefore, taking into account  conditions \eqref{conditions-main}, we have
\begin{align}\label{funjasftgyhchen}
\sup\limits_{t\geq t_0}J_{\lambda,\mu}(tu_0,tv_0) 
&\leq   \frac{2s}{n}\Big(\frac{S_{\alpha,\beta}}{2}\Big)^{\frac{n}{ps}}\nonumber\\
&+C\big( \eps^{\frac{n-ps}{p-1}}\big) -C(\lambda+\mu)
\begin{cases}
 \eps^{n-\frac{n-ps}{p}q},\ \  \ &
\text{if $q>\frac{n(p-1)}{n-ps}$},\\[1mm]
 \eps^{n-\frac{n-ps}{p}q}|\log\eps|,\ \  \ & \text{if $q=\frac{n(p-1)}{n-ps}$}.
\end{cases}
\end{align}
For $\varepsilon=\big(\lambda^{\frac{p}{p-q}}+\mu^{\frac{p}{p-q}}\big)^{ \frac{p-1}{n-ps}}\in(0,\frac{\delta}{2})$, we get	
\begin{align*}\label{funjasftgyh}
\sup\limits_{t\geq t_0}J_{\lambda,\mu}(tu_0,tv_0) &\leq  \frac{2s}{n}\Big(\frac{S_{\alpha,\beta}}{2}\Big)^{\frac{n}{ps}}+  C\big(\lambda^{\frac{p}{p-q}}+\mu^{\frac{p}{p-q}}\big)\nonumber\\
&- 
C\left(\lambda +\mu \right)
\begin{cases}
\big(\lambda^{\frac{p}{p-q}}+\mu^{\frac{p}{p-q}}\big)^{  \frac{p-1}{n-ps}(n-\frac{n-ps}{p}q)},\ \  \ &
\text{if $q>\frac{n(p-1)}{n-ps}$}, \\[1mm]
\big(\lambda^{\frac{p}{p-q}}+\mu^{\frac{p}{p-q}}\big)^{  \frac{n(p-1)}{p(n-ps)} }
\big|\log\big(\lambda^{\frac{p}{p-q}}+\mu^{\frac{p}{p-q}}\big)\big|,\ \  \ & \text{if $q=\frac{n(p-1)}{n-ps}$}.
\end{cases}
\end{align*}
	If $q>\frac{n(p-1)}{n-ps}$, we can choose $\delta_2>0$, for $0<\lambda^{\frac{p}{p-q}}+\mu^{\frac{p}{p-q}}<\delta_2$, such that
	\begin{eqnarray}\label{funjasftgyhio}
	C\big(\lambda^{\frac{p}{p-q}}+\mu^{\frac{p}{p-q}}\big)- 
	C\left(\lambda +\mu \right)\big(\lambda^{\frac{p}{p-q}}+\mu^{\frac{p}{p-q}}\big)^
	{  \frac{p-1}{n-ps}(n-\frac{n-ps}{p}q)} <-C_0  (\lambda^{\frac{p}{p-q}}+\mu^{\frac{p}{p-q}}),
	\end{eqnarray}
	where $C_0$ is the positive constant defined in \eqref{cizero}.
	In fact, \eqref{funjasftgyhio} holds if 
	\begin{equation} \label{conqps}
	1+\frac{p}{p-q}\frac{p-1}{n-ps}\Big(n-\frac{n-ps}{p}q\Big)<\frac{p}{p-q}
	\quad \Leftrightarrow\quad q>\frac{n(p-1)}{n-ps}
	\end{equation}
	If instead $q=\frac{n(p-1)}{n-ps}$ we can choose $\delta_3>0$, for 
	$0<\lambda^{\frac{p}{p-q}}+\mu^{\frac{p}{p-q}}<\delta_3$, such that
	\begin{equation*}
	C\big(\lambda^{\frac{p}{p-q}}+\mu^{\frac{p}{p-q}}\big)- 
	C\left(\lambda +\mu \right)\big(\lambda^{\frac{p}{p-q}}+\mu^{\frac{p}{p-q}}\big)^
	{\frac{n(p-1)}{p(n-ps)}}\big|\log\big(\lambda^{\frac{p}{p-q}}+\mu^{\frac{p}{p-q}}\big)\big| <-C_0  (\lambda^{\frac{p}{p-q}}+\mu^{\frac{p}{p-q}}),
	\end{equation*}
	as $|\log (\lambda^{\frac{p}{p-q}}+\mu^{\frac{p}{p-q}})|\to+\infty$ for $\lambda,\mu\to 0$ and 
	$\left(\lambda +\mu \right)\big(\lambda^{\frac{p}{p-q}}+\mu^{\frac{p}{p-q}}\big)^
	{\frac{n(p-1)}{p(n-ps)}}\simeq \big(\lambda^{\frac{p}{p-q}}+\mu^{\frac{p}{p-q}}\big).$
	Then, taking 
	$$
	\Lambda_2=\min\big\{\delta_1,\delta_2,\delta_3,(\delta/2)^{{\frac{n-ps}{p-1}}}\big\}>0, 
	$$
	then for all
	$0<\lambda^{\frac{p}{p-q}}+\mu^{\frac{p}{p-q}}<\Lambda_2$, we have
	\begin{equation}\label{esinfty}
		\sup\limits_{t\geq0}J_{\lambda,\mu}(tu,tv)<c_\infty.
	\end{equation}
	Finally, fix $\lambda,\mu>0$ with $0<\lambda^{\frac{p}{p-q}}+\mu^{\frac{p}{p-q}}<\Lambda_2$. 
	Since $(u_0,v_0)\neq (0,0)$, from Lemma \ref{firb2}
	and \eqref{esinfty}, there exists $t_2>0$ such that $(t_2u_0,t_2v_0)\in\mathcal{N}_{\lambda,\mu}^-$ and
	$$
	c_{\lambda,\mu}^-\leq J_{\lambda,\mu}(t_2u_0,t_2v_0)\leq\sup\limits_{t\geq0}J_{\lambda,\mu}(tu_0,tv_0)<c_\infty,
	$$
	for all $0<\lambda^{\frac{p}{p-q}}+\mu^{\frac{p}{p-q}}<\Lambda_2$. This concludes the proof.
\end{proof}

\begin{proposition}[Existence of the second solution]
	\label{subcritical2}
There exists a positive constant $\Lambda_3>0$, such that for 
$$
0<\lambda^{\frac{p}{p-q}}+\mu^{\frac{p}{p-q}}<\Lambda_3,
$$ 
the functional $J_{\lambda,\mu}$ has a minimizer $(u_2,v_2)$ in $\mathcal{N}_{\lambda,\mu}^-$ and satisfies

(1) $J_{\lambda,\mu}(u_2,v_2)=c_{\lambda,\mu}^-$,

(2) $(u_2,v_2)$ is a solution of problem \eqref{frac1}.
\end{proposition}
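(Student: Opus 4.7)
The plan is to combine the $(PS)_{c_{\lambda,\mu}^-}$--sequence produced by Proposition~\ref{Pss}(ii) with the compactness range obtained in Lemma~\ref{pscond} and the strict bound $c_{\lambda,\mu}^-<c_\infty$ of Lemma~\ref{psc}. Concretely, set
\[
\Lambda_3:=\min\Big\{(q/p)^{\frac{p}{p-q}}\Lambda_1,\ \Lambda_2\Big\},
\]
where $\Lambda_1$ is given by \eqref{lambda1} and $\Lambda_2$ is the threshold from Lemma~\ref{psc}. This choice simultaneously enables us to apply the fibering characterization of Lemma~\ref{firb2}, the bound $c_{\lambda,\mu}^->d_0>0$ of Lemma~\ref{lema}(ii), Proposition~\ref{Pss}(ii), and the strict inequality $c_{\lambda,\mu}^-<c_\infty$.

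First, for any $\lambda,\mu$ with $0<\lambda^{\frac{p}{p-q}}+\mu^{\frac{p}{p-q}}<\Lambda_3$, Proposition~\ref{Pss}(ii) yields a sequence $\{(u_k,v_k)\}\subset\mathcal{N}_{\lambda,\mu}^-$ with
\[
J_{\lambda,\mu}(u_k,v_k)\to c_{\lambda,\mu}^-,\qquad J'_{\lambda,\mu}(u_k,v_k)\to 0\ \text{in }E^\ast.
\]
By Lemma~\ref{psc}, $c_{\lambda,\mu}^-<c_\infty$, so this sequence lies in the Palais--Smale range \eqref{pscv}. Lemma~\ref{pscond} then produces a subsequence strongly converging to some $(u_2,v_2)\in E$ with $J_{\lambda,\mu}(u_2,v_2)=c_{\lambda,\mu}^-$ and $J'_{\lambda,\mu}(u_2,v_2)=0$, so $(u_2,v_2)$ is a solution of \eqref{frac1} in the sense of Definition~\ref{weaksolutions}.

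It remains to check that $(u_2,v_2)\in\mathcal{N}_{\lambda,\mu}^-$, and in particular that it is nontrivial and distinct from the first solution $(u_1,v_1)\in\mathcal{N}_{\lambda,\mu}^+$ provided by Proposition~\ref{subcritical1}. Nontriviality follows from Lemma~\ref{lema}(ii): $J_{\lambda,\mu}(u_2,v_2)=c_{\lambda,\mu}^-\geq d_0>0$, so $(u_2,v_2)\neq(0,0)$. Then $(u_2,v_2)\in\mathcal{N}_{\lambda,\mu}$ by the strong convergence and passage to the limit in the Nehari identity. Since $\mathcal{N}_{\lambda,\mu}^0=\emptyset$ by Lemma~\ref{le1}, we must have $(u_2,v_2)\in\mathcal{N}_{\lambda,\mu}^+\cup\mathcal{N}_{\lambda,\mu}^-$. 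The $\mathcal{N}_{\lambda,\mu}^+$ alternative is ruled out because Lemma~\ref{lema}(i) gives $J_{\lambda,\mu}|_{\mathcal{N}_{\lambda,\mu}^+}<0$ while $J_{\lambda,\mu}(u_2,v_2)=c_{\lambda,\mu}^->0$. Hence $(u_2,v_2)\in\mathcal{N}_{\lambda,\mu}^-$ and clearly $(u_2,v_2)\neq(u_1,v_1)$, completing the proof of Theorem~\ref{fracmain}.

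The main obstacle is the step ensuring that the weak limit actually lies in $\mathcal{N}_{\lambda,\mu}^-$ rather than being lost at the boundary $\mathcal{N}_{\lambda,\mu}^0$; this is precisely why the restriction $(q/p)^{\frac{p}{p-q}}\Lambda_1$ is needed (it yields $c_{\lambda,\mu}^->d_0>0$) and why the strict inequality $c_{\lambda,\mu}^-<c_\infty$ from Lemma~\ref{psc} is crucial to obtain strong convergence in the first place. Everything else is a bookkeeping exercise using results already established in the previous sections.
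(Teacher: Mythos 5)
Your argument is correct and follows essentially the same route as the paper: the $(PS)_{c^-_{\lambda,\mu}}$-sequence from Proposition~\ref{Pss}(ii), strong convergence via the compactness range of Lemma~\ref{pscond} combined with the strict bound $c^-_{\lambda,\mu}<c_\infty$ of Lemma~\ref{psc}, and then exclusion of $\mathcal{N}^0_{\lambda,\mu}$. The only (harmless) difference is the final step: the paper passes to the limit in $\varphi''_{\tilde u_k,\tilde v_k}(1)<0$ to get $\varphi''_{u_2,v_2}(1)\leq 0$ and concludes via $\mathcal{N}^0_{\lambda,\mu}=\emptyset$, whereas you rule out $\mathcal{N}^+_{\lambda,\mu}$ by the sign of $J_{\lambda,\mu}$ there (Lemma~\ref{lema}); both are valid.
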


\begin{proof}
Let $\Lambda_2$ be as in Lemma \ref{psc} and set  $\Lambda_3:=\{\Lambda_2, \left(q/p\right)^{\frac{p}{p-q}}\Lambda_1\}.$
By means of $(ii)$ of Proposition \ref{Pss}, for all $0<\lambda^{p/(p-q)}+\mu^{p/(p-q)}<\Lambda_3$, there exists a bounded $(PS)_{c_{\lambda,\mu}^-}$ sequence $(\tilde{u}_k,\tilde{v}_k)\}\subset \mathcal{N}_{\lambda,\mu}^-$ for $J_{\lambda,\mu}$.
By the same argument in the proof of  Proposition \ref{subcritical1}, there exists $(u_2,v_2)\in E$ such that, up to a subsequence,
$\tilde{u}_k\to u_2$, $\tilde{v}_k\to v_2$ strongly in $E$ and $J_{\lambda,\mu}(u_2,v_2)=c_{\lambda,\mu}^-$. Moreover, $(u_2,v_2)$ is a solution of problem (\ref{frac1}).
Next we show that $(u_2,v_2)\in \mathcal{N}_{\lambda,\mu}^-$. In fact, since $(\tilde{u}_k,\tilde{v}_k)\in \mathcal{N}_{\lambda,\mu}^-$, we have
\begin{equation*} 
	\varphi_{\tilde{u}_k,\tilde{v}_k}''(1) = (p-q)\|(\tilde{u}_k,\tilde{v}_k)\|^p-2((\alpha+\beta)-q)\int_\Omega |\tilde{u}_k|^{\alpha}|\tilde{v}_k|^\beta dx <0.
\end{equation*}
Since $\tilde{u}_k\to u_2$, $\tilde{v}_k\to v_2$ strongly in $E$, passing to the limit  we obtain
\begin{equation*}
\varphi_{u_2,v_2}''(1) = (p-q)\|(u_2,v_2)\|^p-2((\alpha+\beta)-q)\int_\Omega |u_2|^{\alpha}|v_2|^\beta dx \leq 0.
\end{equation*}
Since $\mathcal{N}_{\lambda,\mu}^0=\emptyset$, we conclude that $\varphi_{u_2,v_2}''(1) <0$
namely $(u_2,v_2)\in \mathcal{N}_{\lambda,\mu}^-$.	
\end{proof}

\section{Proof of the result concluded}
\label{finalsect}
\noindent
Taking $\Lambda_\ast=\min\{\Lambda_1,\Lambda_2,\Lambda_3\}$, by 
Propositions \ref{subcritical1} and \ref{subcritical2}, we 
know that for all 
$0<\lambda^{p/(p-q)}+\mu^{p/(p-q)}<\Lambda_\ast,$ 
problem (\ref{frac1}) has two solutions $(u_1,v_1)\in\mathcal{N}_{\lambda,\mu}^+$ and $(u_2,v_2)\in\mathcal{N}_{\lambda,\mu}^-$ in $E$. Since $\mathcal{N}_{\lambda,\mu}^+\cap\mathcal{N}_{\lambda,\mu}^-=\emptyset$, then these two solutions are distinct. 
We next show that $(u_1,v_1)$ and $(u_2,v_2)$ are not semi-trivial. 
We know that
\begin{eqnarray}\label{semilb}
J_{\lambda,\mu}(u_1,v_1)<0 \quad\text{and}\quad  J_{\lambda,\mu}(u_2,v_2)>0.
\end{eqnarray}
We note that
if $(u,0)$ (or $(0,v)$) is a semi-trivial solution of problem \eqref{frac1}, then \eqref{frac1} reduces to
\begin{eqnarray}\label{semiu}
(-\Delta)_p^s u =\lambda |u|^{q-2}u\ \ \mbox{in}\ \Omega,\qquad
u=0  \ \  {\rm in}\ \mathbb{R}^n\setminus\Omega.
\end{eqnarray}
Then
\begin{equation}\label{semilc}
J_{\lambda,\mu}(u,0)=\frac{1}{p}\int_Q\frac{|u(x)-u(y)|^p}{|x-y|^{n+ps}} \,dx\,dy
 -\frac{ \lambda}{q}\int_\Omega  |u|^{q} dx
=-\frac{p-q}{pq}\|u\|_{X_0}^p<0.
\end{equation}
From \eqref{semilb} and \eqref{semilc}, we get that $(u_2,v_2)$ is {\em not} semi-trivial.
Now we prove that $(u_1,v_1)$ is not semi-trivial. Without loss of generality, we may assume that
$v_1\equiv0$. Then $u_1$ is a nontrivial solution of \eqref{semiu}, and
$$
\|(u_1,0)\|^p=\|u_1\|_{X_0}^p=\lambda\int_\Omega|u_1|^qdx>0.
$$
Moreover, we may choose $w\in X_0\backslash\{0\}$ such that
$$
\|(0,w)\|^p=\|w\|_{X_0}^p=\mu\int_\Omega|w|^qdx>0.
$$
By Lemma \ref{firb2} there exists a unique $0<t_1<t_{\maxx}(u_1,w)$ such that
$
(t_1 u_1,t_1w)\in\mathcal{N}_{\lambda,\mu}^+,
$
where
$$
t_{\maxx}(u_1,w)=\Bigg(\frac{(\alpha+\beta-q)\displaystyle\int_\Omega (\lambda|u_1|^{q}+\mu|w|^q) dx}{(\alpha+\beta-p) \|(u_1,w)\|^p}\Bigg)^{\frac{1}{p-q}}=
\Big(\frac{\alpha+\beta-q}{\alpha+\beta-p}\Big)^{\frac{1}{p-q}}>1.
$$
Furthermore,
$$
J_{\lambda,\mu}(t_1u_1,t_1w)=\inf\limits_{0\leq t\leq t_{\maxx}}J_{\lambda,\mu}(tu_1,tw).
$$
This together with $(u_1,0)\in\mathcal{N}_{\lambda,\mu}^+$ implies that
$$
c_{\lambda,\mu}^+\leq J_{\lambda,\mu}(t_1u_1,t_1w)\leq J_{\lambda,\mu}(u_1,w)<J_{\lambda,\mu}(u_1,0)=c_{\lambda,\mu}^+,
$$
which is a contradiction. Hence $(u_1,v_1)$ is {\em not} semi-trivial too.
The proof is now complete.  \qed

\bigskip

\bigskip

\enddocument